\documentclass[12pt]{amsart}
\usepackage{graphicx} 
\usepackage{amssymb} 

\usepackage[utf8]{inputenc}
\usepackage{enumerate}          
\usepackage{amsmath, amsthm}    
\usepackage{wrapfig} 
\graphicspath{ {./images/} }
\usepackage{rotating}
\usepackage{bm}
\usepackage{verbatim}
\usepackage[dvipsnames]{xcolor} 
\usepackage{overpic} 
\usepackage{hhline}
\usepackage{ulem}
\usepackage{tikz-cd}
\usetikzlibrary{decorations.pathreplacing}
\usepackage{hyperref}           

\definecolor{insgreen}{rgb}{0,0.45,0}
\definecolor{delred}{rgb}{0.80,0,0}

\newtheorem{theorem}{Theorem}[subsection]
\newtheorem{corollary}[theorem]{Corollary}
\newtheorem{lemma}[theorem]{Lemma}
\newtheorem{proposition}[theorem]{Proposition}
\newtheorem{thm}{Theorem}

\theoremstyle{definition}
\newtheorem{definition}[theorem]{Definition}
\theoremstyle{definition}
\newtheorem{example}[theorem]{Example}
\theoremstyle{definition}
\newtheorem{remark}[theorem]{Remark}
\theoremstyle{definition}

\makeatletter
\renewcommand\@seccntformat[1]{%
	\protect\textup{%
		\expandafter\ifx\csname the#1\endcsname\thesubsection
		\protect\bfseries
		\else
		\protect\@secnumfont
		\fi
		\csname the#1\endcsname
		\protect\@secnumpunct
	}%
}
\makeatother

\newcommand{\Z}{\mathbb{Z}}

\newcommand{\R}{\mathbb{R}}
\newcommand{\C}{\mathbb{C}}

\newcommand{\inv}{^{-1}}

\newcommand{\RP}{{\mathbb{RP}}{}}
\DeclareMathOperator{\id}{id}
\DeclareMathOperator{\PD}{PD}
\DeclareMathOperator{\Int}{Int}
\let\Bd\partial

\makeatletter

\newcommand{\eusm@selectfont}[1]{%
	\ifdim#1>10pt
	\font\eusm@tmp=eusm10 at #1\relax
	\else\ifdim#1>9pt
	\font\eusm@tmp=eusm10\relax
	\else\ifdim#1>8pt
	\font\eusm@tmp=eusm9\relax
	\else\ifdim#1>7pt
	\font\eusm@tmp=eusm8\relax
	\else\ifdim#1>6pt
	\font\eusm@tmp=eusm7\relax
	\else\ifdim#1>5pt
	\font\eusm@tmp=eusm6\relax
	\else
	\font\eusm@tmp=eusm5\relax
	\fi\fi\fi\fi\fi\fi
}

\newcommand{\eusm@makebox}[2]{%
	\begingroup
	\dimen@=\f@size pt\relax   
	\dimen@=#1\dimen@          
	\eusm@selectfont{\the\dimen@}%
	\hbox{\eusm@tmp #2}%
	\endgroup
}

\newcommand{\eusmletter}[1]{%
	\ifmmode
	\mathchoice
	{\mathord{\eusm@makebox{1}{#1}}}
	{\mathord{\eusm@makebox{1}{#1}}}
	{\mathord{\eusm@makebox{0.7}{#1}}}
	{\mathord{\eusm@makebox{0.5}{#1}}}
	\else
	\eusm@makebox{1}{#1}%
	\fi
}

\newcommand{\B}{\eusmletter{B}}
\renewcommand{\L}{\eusmletter{L}} 
\newcommand{\M}{\eusmletter{M}}
\newcommand{\T}{\eusmletter{T}}

\makeatother

\usepackage[left=1in, right=1in, top=.75in, bottom=.75in]{geometry} 
\begin{document}

\title[Lifting braids over $\RP^2$]{Lifting braids over $\RP^2$ and\\ non-orientable Lefschetz fibrations}

\author{Valentina Bais}
\address{Max Planck Institute for Mathematics \\ Bonn \\ Germany.}
\email{bais@mpim-bonn.mpg.de}

\author{Porter Morgan}
\address{Department of Mathematics \\ University of Texas at Austin \\ Austin TX \\  United States.}
\email{porter.morgan@austin.utexas.edu}

\author{Daniele Zuddas}
\address{Dipartimento di Matematica, Informatica e Geoscienze\\Università degli Studi di Trieste \\Trieste\\Italy.}
\email{dzuddas@units.it}

\begin{abstract}
    We show that, given a closed connected non-orientable surface $F$ and a simple branched cover $p \colon F \rightarrow \mathbb{RP}^2$ of degree $d \geq 5$, every self-diffeomorphism of $F$ is isotopic to the lift of a self-diffeomorphism of $\mathbb{RP}^2$ with respect to $p$. As an application, we show that every Lefschetz fibration over a connected surface $B$ with $\partial B\neq \emptyset$ and closed non-orientable regular fiber can be expressed as the composition of a simple cover of $B\times \mathbb{RP}^2$ branched over a surface braided over $B$ and projection onto the $B$-factor.
\end{abstract}

\maketitle

\section{Introduction}

It is a classical result by Hilden \cite{Hilden} and Berstein--Edmonds \cite{BE} that, 
given a closed connected orientable surface $F$ and a simple branched cover $p \colon F \rightarrow S^2$ of degree $d \geq 3$, every self-diffeomorphism of $F$ is isotopic to the lift of a self-diffeomorphism of $S^2$ with respect to $p$. This was Hilden's main tool to show that every closed connected orientable $3$-manifold is a simple $3$-fold cover of $S^3$ branched over a link, a result that was independently proven at the same time by Hirsch \cite{Hirsch1974} and Montesinos \cite{montesinos1974representation}.

The present paper is aimed at proving the non-orientable counterpart of Hilden's result for simple branched covers of the real projective plane $\RP^2$ of degree at least five. The following is our main theorem.

\begin{thm}\label{thm:1}
    Let $F$ be a closed connected non-orientable surface and let $p \colon F \rightarrow \RP^2$ be a simple branched cover of degree $d \geq 5$. Then, every self-diffeomorphism of $F$ is isotopic to the lift of a self-diffeomorphism of $\RP^2$.
\end{thm}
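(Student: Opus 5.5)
We follow the strategy of Berstein--Edmonds: the liftable mapping classes form a subgroup of $\mathrm{Mod}(F)$, so it suffices to show that a generating set of $\mathrm{Mod}(F)$ consists of liftable classes. Let $B=\{b_1,\dots,b_n\}\subset\RP^2$ be the branch set. A diffeomorphism $h$ of $\RP^2$ preserving $B$ lifts exactly when the induced automorphism of $\pi_1(\RP^2\setminus B)$ preserves the monodromy $\omega\colon\pi_1(\RP^2\setminus B)\to S_d$ up to conjugation in $S_d$. We then show that every element of a chosen generating set of $\mathrm{Mod}(F)$ is realized by such a lift. For $F$ non-orientable the mapping class group is generated by Dehn twists along two-sided simple closed curves together with a crosscap slide (Y-homeomorphism); by Lickorish and Chillingworth a finite list suffices, and Szepietowski gives explicit generators in terms of twists about a chain of curves plus one crosscap slide.

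\textbf{Step 1: normal form for the cover.} We first reduce $p$ to a standard model. Since $\mathrm{Mod}(\RP^2,B)$ acts on simple monodromies, we use the known classification of simple branched covers of $\RP^2$ up to equivalence (a Hurwitz-type connectivity result, which we either cite or prove by braid moves on $\RP^2$): for fixed $d$ and Euler characteristic, any two connected simple covers are equivalent after a diffeomorphism of $\RP^2$. We may therefore assume the monodromy has an explicit form. The branch points are arranged so that the orientation-reversing generator $\gamma$ (the core of the crosscap) maps to a specific permutation, for instance a product of transpositions or the identity on some sheets. The remaining branch points come in pairs with monodromy $(i\,i{+}1),(i\,i{+}1)$, and extra pairs are added to realize the required genus.

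\textbf{Step 2: lifting twists and crosscap slides.} In this model we exhibit curves explicitly. A half-twist $\sigma$ along an arc between two branch points with monodromies $(i\,j)$ and $(j\,k)$, for distinct $i,j,k$, lifts to a Dehn twist along the lifted circle, exactly as in Hilden's argument. Half-twists between points with equal transposition lift to twists, or to their squares, about the two-sided curve formed by the preimage of the arc. Choosing a chain of such arcs recovers the Humphries-type chain of two-sided curves in Szepietowski's generating set. For the crosscap slide we use a "point push" of a branch point around the one-sided loop $\gamma$ of $\RP^2$, or a slide of a disk containing two branch points along $\gamma$. Its lift is computed from the monodromy. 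By arranging that $\omega(\gamma)$ fixes some sheets and swaps others appropriately, the lift restricted to a neighborhood is a crosscap slide, and is the identity (up to isotopy) elsewhere.

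\textbf{Main obstacle.} The hard part is the last step together with the degree bound $d\ge5$. The preimage of $\gamma$ consists of several curves, some one-sided and some two-sided, according to the cycle structure of $\omega(\gamma)$. A lift of a push along $\gamma$ therefore acts simultaneously on all these components, and need not be a single crosscap slide. We first try to choose $\omega(\gamma)$, within the normal form from Step 1, to be a single transposition times fixed points, or the identity. A single Y-homeomorphism then appears on one lifted component, while the other components carry only compositions of twists, which are already liftable by Step 2 and can be cancelled. Verifying that enough sheets are available to make such a choice while keeping the cover connected and simple is where we expect $d\ge5$ to be needed. We would carry this out carefully, possibly with separate treatment of small genus of $F$.
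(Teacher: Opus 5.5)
You have the right overall frame: reduce to generators, and transport the result between covers of the same degree via Theorem~\ref{thm:gabai_kazez}. But there is a genuine gap. The decisive step, exhibiting a liftable braid whose lift lies outside $\T(F)$, is left undone, and the route you sketch for it cannot work in general.

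Write $\RP^2=D^2\cup Mb$ with the branch set in $D^2$, and let $\ell=[\RP^1]$. Suppose $\omega(\ell)$ is the identity or a single transposition. Then:
\begin{itemize}
\item $\omega(\ell)^2=1$, so $\partial D^2$ lifts to $d$ circles;
\item $p^{-1}(Mb)$ is a union of M\"obius bands and at most one annulus;
\item $p^{-1}(D^2)$ is an orientable surface with $d$ boundary circles, at most two components (since $F$ is connected), and Euler characteristic $\chi(F)$.
\end{itemize}
This forces $g(F)\geq d-2$. So for fixed $F$ and large $d$ no such cover exists; the Klein bottle with $d=6$ already fails. The classification lets you choose among covers that exist; it does not let you prescribe $\omega(\ell)$. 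In the paper's models, $\omega(\ell)$ is a cycle of maximal length.

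Even granting a computed lift of a push along $\ell$, you would still need a certificate that it is not in $\T(F)$. The lift acts on all components of $p^{-1}(\RP^1)$ at once, and, for instance, crosscap slides on an even number of them multiply to an element of $\T(F)$.

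The missing idea is that no particular crosscap slide is needed. Since $[\M(F):\T(F)]=2$, once $\T(F)\subseteq\mathrm{im}\,\lambda_p$ it suffices to find one liftable element whose lift is not in $\T(F)$. Stukow's criterion (Proposition~\ref{prop:stukow}) detects this through the sign of $\det f_*$ on $H_1(F;\R)$.

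The paper builds such an element from a reflection of $\RP^2$:
\begin{itemize}
\item reflect $Mb$ across its core, which restricts to the antipodal map on $\partial Mb$;
\item extend by the antipodal map on a collar of $\partial D^2$ containing the stabilization points, placed so that the points labeled $(1\,2j{+}1)$ go to those labeled $(2\,2j{+}2)$;
\item isotope to the identity further inside.
\end{itemize}
For the even-genus model this lifts to a hyperelliptic-type involution on $p^{-1}(D^2)$ glued to a reflection of the annulus $p^{-1}(Mb)$, with determinant $-1$. Odd genus follows by connect-summing an extra $\RP^2$ sheet (Lemmas~\ref{prop:evengenus_Y} and~\ref{prop:oddgenus_Y}).

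This works in every admissible degree, so $d\geq 5$ is not needed there. It is needed for the twist subgroup, specifically for the twist generators running through the crosscap region ($\eta,\psi,\chi$ in the paper). Those require explicit stabilizations and liftable isotopies (Propositions~\ref{prop:even_twist_lift} and~\ref{prop:odd_twist_lift}), and your Step~2 merely asserts them.

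Step~2 also contains a false claim. A half twist between points labeled $(i\,j)$ and $(j\,k)$ is in general not liftable, and the preimage of its arc is an arc, not a circle. Only its cube lifts, and it lifts trivially, to a map supported in a disk. Among half twists, only those between equally labeled points lift to Dehn twists.
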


In other words, Theorem \Ref{thm:1} shows that the lifting homomorphism
    \[\lambda_p \colon \L_p \longrightarrow  \M(F) \]
    is surjective.
We refer the reader to Definition \ref{def:lifthom} for a precise description of the lifting homomorphism $\lambda_p$ and of the subgroup of liftable braids $\L_p \subset \B_n(\RP^2)$.

As a consequence, we get the following result, which extends \cite[Proposition 2]{LoiPiergallini} to the non-orientable case.

\begin{thm}\label{thm:3}
Let $f \colon X \rightarrow B$ be a Lefschetz fibration whose regular fiber $F$ is a closed connected non-orientable surface. Moreover, we assume that $B$ is a connected surface with non-empty boundary. Then, for every integer $d \geq 6$ with the same parity as $\chi(F)$, there exists a simple $d$-fold cover \[p \colon  X\longrightarrow B \times \RP^2\] branched over a surface $S \subset B \times \RP^2$ braided over $B$ such that the following diagram commutes
\begin{center}
\begin{tikzcd}
	X \arrow[rd, "f"'] \arrow[r, "p"] & B \times \RP^2  \arrow[d, "\pi_{B}"] \\
                                  & B.
\end{tikzcd}
\end{center}

Here $\pi_{B} \colon B \times \RP^2 \rightarrow B$ denotes the canonical projection onto $B$.
\end{thm}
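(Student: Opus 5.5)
We follow the strategy of Loi--Piergallini, with Theorem \ref{thm:1} playing the role of Hilden's lifting result. Since $\partial B\neq\emptyset$, $B$ deformation retracts onto a wedge of circles. Hence $X$ can be built from the trivial fibration $D^2\times F$ over a disk $D\subset B$ by adding fibered $1$-handles $[0,1]\times D^1\times F$, glued via monodromy diffeomorphisms $\phi_j\in\M(F)$, together with Lefschetz singular fibers. In terms of monodromy, $f$ is described by a base point $*\in\partial D$ and a collection of loops $\gamma_1,\dots,\gamma_k$ (around the critical values) and $\delta_1,\dots,\delta_m$ (cores of the $1$-handles of $B$). Their monodromies are Dehn twists $t_{c_i}$ about two-sided vanishing cycles $c_i\subset F$, and arbitrary $\phi_j\in\M(F)$, respectively. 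Because $\pi_1(B)$ is free on these loops, any assignment of such monodromies (compatible up to isotopy) defines $f$ up to isomorphism. The plan is therefore to find a simple branched cover $p_0\colon F\to\RP^2$ of degree $d$ together with braids whose lifts realize these monodromies. Then we take the braided surface $S\subset B\times\RP^2$ with the corresponding braid monodromy and lift the cover fiberwise.

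\emph{Step 1.} For $d\ge 6$ with $d\equiv\chi(F)\pmod 2$, a simple $d$-fold branched cover $p_0\colon F\to\RP^2$ exists. By Riemann--Hurwitz the number of branch points is $n=d\chi(\RP^2)-\chi(F)=d-\chi(F)$, which must be even, and this is guaranteed by the parity condition. One constructs $p_0$ by choosing transpositions in $S_d$ with transitive image that respect the presentation of $\pi_1(\RP^2\setminus\{n\text{ pts}\})$; the resulting cover is connected and non-orientable. We also need $n$ to be large enough that vanishing cycles can be put in good position, which is why the stronger bound $d\ge 6$ (rather than $5$) is assumed.

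\emph{Step 2.} Realize the monodromies. For each $\phi_j$, Theorem \ref{thm:1} gives a liftable braid $\beta_j\in\L_{p_0}$ with $\lambda_{p_0}(\beta_j)=\phi_j$.

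For each Dehn twist $t_{c_i}$, we need more: a \emph{half-twist} $\sigma_i$, i.e.\ a braid exchanging two branch points along an arc $a_i$, with $\lambda_{p_0}(\sigma_i)=t_{c_i}$. This is needed so that the local model over a critical value is a simple branched cover of $D^2\times\RP^2$ branched over a surface with a simple tangency, i.e.\ a Lefschetz singularity. Such a $\sigma_i$ exists when $a_i$ joins two branch points whose monodromy transpositions share exactly one index; then $p_0^{-1}(a_i)$ contains a two-sided circle $\tilde c$ and $\lambda_{p_0}(\sigma_i)=t_{\tilde c}$ (for a suitable orientation of the twist). We first produce one such arc, giving a circle $\tilde c$ of the same type as $c_i$ (non-separating two-sided with non-orientable complement, etc.). By Theorem \ref{thm:1} and the classification of curves on $F$, there is $\psi\in\M(F)$ with $\psi(\tilde c)=c_i$, lifted by some $\beta\in\L_{p_0}$. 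Then $\beta\sigma\beta^{-1}$ is a half-twist lifting to $t_{c_i}$. For every possible topological type of two-sided vanishing cycle, one must exhibit a suitable arc in a fixed cover $p_0$; the extra room provided by $d\ge6$ makes this possible.

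\emph{Step 3.} Build the cover. Define the braided surface $S\subset B\times\RP^2$ over the handle decomposition of $B$: over $D$ it is trivial, near each critical value it is a simple branch point of $S\to B$ with monodromy $\sigma_i$, and along each $1$-handle it is glued via $\beta_j$. The simple cover of $B\times\RP^2$ branched over $S$ restricts on each fiber to $p_0$. Its composition with $\pi_B$ is then a Lefschetz fibration with the same monodromy as $f$. By the uniqueness part of the monodromy classification over bordered bases, this fibration is isomorphic to $f$, which gives $p$.

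The main obstacle is Step 2 for the vanishing cycles. One must show that every topological type of two-sided curve on a non-orientable $F$ — including separating curves and curves whose complement is orientable — arises as the lift of a half-twist arc for a suitable cover $p_0$. One must also keep track of the sign and orientation ambiguity of Dehn twists on a non-orientable surface.
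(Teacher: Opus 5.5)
Your overall architecture is the paper's: realize the gluing maps along the cut arcs ($1$-handles) of $B$ by Theorem \ref{thm:1}, put a half twist at each critical value, and conjugate a half twist by a liftable braid to move it onto a prescribed curve, as in Corollary \ref{thm:lift}. Steps 1 and 3 are fine. The problems are in Step 2.

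\textbf{The liftability criterion is wrong.} A half twist $h_a$ lifts to a Dehn twist precisely when the two endpoints of $a$ carry the \emph{same} transposition $(i\,j)$ relative to $a$. Then the lifts of $a$ to sheets $i$ and $j$ close up into a circle (Proposition \ref{prop:liftcrit}, Remark \ref{rmk:ht_dt}). If the labels are $(i\,j)$ and $(j\,k)$, as you propose, then $p_0^{-1}(a)$ is an arc and the preimage of a disk around $a$ consists only of disks. In that case $h_a$ is in general not even liftable; only $h_a^3$ is, and its lift is trivial.

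\textbf{The step you flag as the main obstacle is left unproved, and it is the substance of the theorem.} You need every type of two-sided curve to arise as a component of $p^{-1}(a)$, for $a$ an arc joining equally labeled branch points. The types are: separating with an orientable side, separating with two non-orientable sides, and non-separating. Appealing to ``extra room'' is not an argument. Two ingredients are missing: build a \emph{separate} cover for each type, then transfer it to $p_0$ using Gabai--Kazez (Theorem \ref{thm:gabai_kazez}), which your proposal never invokes.

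For separating $\gamma$ with $F=F_1\cup_\gamma F_2$, the paper proceeds as follows.
\begin{enumerate}[(a)]
\item Cap off both sides and take simple covers of the capped surfaces over $\RP^2$, and their disjoint union.
\item Replace a disk away from the branch set by one containing two new branch points, both labeled $(i\,j)$, with $i$ a sheet of the first cover and $j$ a sheet of the second.
\item The preimage of that disk is $d-2$ disks plus a tube realizing the connected sum, with core $\gamma$. So $t_\gamma$ is the lift of the half twist on the arc joining the two new points (Propositions \ref{prop:sep1}, \ref{prop:sep2}).
\end{enumerate}
Non-separating curves come from the covers built in the proof of Theorem \ref{thm:chil}, and Proposition \ref{prop:stab} raises the degree. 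Gabai--Kazez then carries each such cover to $p_0$ by an equivalence. This produces a half twist for $p_0$ lifting to a twist on a curve of the required type, and your conjugation argument finishes the step.

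This is also where $d\ge 6$ actually comes from. A closed orientable surface of positive genus covers $\RP^2$ only through $S^2$ (Remark \ref{rmk:or}), hence with degree at least $4$. The non-orientable side needs $2$ or $3$ further sheets, giving $6$ or $7$. The bound has nothing to do with having enough branch points to put curves in good position.
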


\subsubsection*{Proof strategies} The proof of Theorem \ref{thm:1} is roughly as follows. On a closed non-orientable surface $F$, Dehn twists generate an index two subgroup of the mapping class group $\M(F)$, which we denote by $\T(F)$. We first prove that, for any integer $d \geq 5$, there exists a simple $d$-fold branched cover $p \colon F \rightarrow \mathbb{RP}^2$ such that the image of the lifting homomorphism $\lambda_p$ contains the whole $\T(F)$. This is achieved by explicitly showing that a set of generators of $\T(F)$ described by Chillingworth in \cite{Chillingworth_1969} lies in the image of $\lambda_p$. We then argue that $\T(F)$ is contained in the image of $\lambda_q$ for any other simple branched cover $q$ with the same degree as $p$, since $p$ and $q$ are equivalent by a result of Gabai--Kazez \cite{GK2}. To conclude, we construct for every degree $d \geq 5$ a simple branched cover for which an element in $\M(F) \setminus \T(F)$ is in the image of the cover's lifting homomorphism. The theorem then follows from the fact that $\T(F)$ has index two in $\M(F)$ and another application of \cite{GK2}.

A summary of the proof of Theorem \ref{thm:3} is as follows. We identify the regular fiber of the Lefschetz fibration $f\colon X\to B$ with a closed non-orientable surface $F$. Using Theorem \ref{thm:1}, we show that for any $d$-fold covering of degree at least $6$, the Dehn twist along each vanishing cycle of $f$ is the lift of a half twist around an embedded arc on $\RP^2$. Using this property, we adopt methods from \cite{LoiPiergallini} and construct a branched cover $p$ of $B\times \RP^2$ whose branch set itself admits a simple branched cover of $B$. The monodromy associated to the composition of this branched cover with projection onto $B$ coincides with the monodromy of $f$, which confirms that the total space of the branched cover is $X$ and that $f$ and $\pi_B \circ p$ are equivalent as Lefschetz fibrations.

\subsubsection*{Historical note} There is an extensive history of research on simple branched covers and their relation to Lefschetz fibrations. Early studies of lifting homomorphisms associated to covering maps between surfaces include \cite{birman_hilden_1, birman_hilden_2}, which studied the hyperelliptic double branched cover. Building off of this, more general results about the lifting homomorphism of covers between orientable surfaces were given in \cite{BE, Hilden}. Theorem \ref{thm:1} extends results in this literature to the non-orientable setting.

The connection between branched covers over braided surfaces and Lefschetz fibrations is at the core of Theorem \ref{thm:3}. Factorizations of certain orientable Lefschetz fibrations through branched covers were given in \cite{LoiPiergallini}, and this factorization was further studied and applied in \cite{Apostolakis_2013, Zuddas2007RepresentingDT}. Other instances where Lefschetz fibrations have been recognized as double branched covers may be found in \cite{Fuller2000, Siebert19990}. While non-orientable Lefschetz fibrations have also attracted attention \cite{BaykurMorgan, MillerOzbagci}, Theorem \ref{thm:3} is the first instance in which they have been factorized through branched covers.

\subsubsection*{Outline} In Section \ref{sec:2} we provide background information about branched covers. Then in Section \ref{sec:3} we give an overview of the mapping class group for non-orientable surfaces. In Section \ref{sec:4} we show how to represent 2-dimensional branched covers using \textsl{projection diagrams}, and introduce the covering maps which are central to our proof of Theorem \ref{thm:1}. Then in Section \ref{sec:5} we define the braid group and the lifting homomorphism, which allow us to formulate the statement of Theorem \ref{thm:1} more precisely. In Section \ref{sec:lift_chill} we prove Theorem \ref{thm:1}. In Section \ref{sec:9} we show how to describe Dehn twists as lifts of half twists, while in Sections \ref{sec:6} and \ref{sec:7} we define braided surfaces and Lefschetz fibrations, which are necessary background to our proof of Theorem \ref{thm:3}. Finally, in Section \ref{sec:lef} we prove Theorem \ref{thm:3}.

\subsubsection*{Acknowledgments}
Valentina Bais and Daniele Zuddas acknowledge GNSAGA – Isti\-tuto Nazionale di Alta Matematica ``Francesco Se\-veri'', Italy. The authors acknowledge the use of LLMs for grammatical revisions to this paper. There was no LLM assistance with the mathematical content of the paper, and the ideas presented here belong entirely to the authors.

\section{Branched covers}\label{sec:2}
\begin{definition}[Branched cover] Let $X$ and $Y$ be smooth compact $n$-manifolds with possibly non-empty boundary. A smooth map
\[p \colon X \longrightarrow Y\] is called a $d$-fold \textsl{branched cover} if there exists a properly embedded smooth $(n-2)$-submanifold $B_p \subset Y$ such that 
\begin{itemize}
    \item[(a)] the restriction of $p$ over the complement $Y \setminus B_p$ is an ordinary $d$-fold cover;
    \item[(b)] for every $x \in p^{-1}(B_p)$, there are local charts centered at $x$ and $p(x)$ respectively in which $p$ is smoothly equivalent to the map
    \[\mathbb{C} \times \mathbb{E}^{n-2} \longrightarrow \mathbb{C} \times \mathbb{E}^{n-2}\]
    \[(z,t)\longmapsto (z^k,t)\]
    for some positive integer $k \leq d$, where $\mathbb{E}^{n-2}=\mathbb{R}^{n-2}$ if $x \in \Int X$, $p(x)\in \Int Y$ and $\mathbb{E}^{n-2}=\mathbb{R}^{n-3} \times [0,+\infty)$ if $x \in \partial X$, $p(x)\in \partial Y$.
\end{itemize} 

The integer $d$ is called the \textsl{degree} of the cover, $k$ is the local degree of $p$ at $x$, while the submanifold $B_p\subset Y$ is the \textsl{branch set} of $p$. 

Two branched covers $p_1, p_2\colon X\rightarrow Y$ are \textsl{equivalent} if there are diffeomorphisms \[h\colon X\longrightarrow X \quad \text{and} \quad k\colon Y\longrightarrow Y\] such that $k\circ p_1 = p_2\circ h$. Moreover, we say that $p_1$ and $p_2$ are \textsl{strongly equivalent} if they are equivalent as above and $k \colon Y \rightarrow Y$ is smoothly isotopic to the identity of $Y$.
\end{definition}

\begin{remark}\label{rmk:equivRP2}
Every self-diffeomorphism of $\mathbb{RP}^2$ is smoothly isotopic to the identity \cite{Chillingworth_1969}, therefore any two branched covers of $\mathbb{RP}^2$ are equivalent if and only if they are strongly equivalent.
\end{remark}

We now recall the notion of monodromy for a $d$-fold branched cover $p \colon X \rightarrow Y$.

\begin{definition}
    Let $p \colon X \rightarrow Y$ be a $d$-fold branched cover with branch set $B_p \subset Y$. The \textsl{monodromy of $p$} is the monodromy representation \[\omega_p \colon \pi_1(Y \setminus B_p) \longrightarrow S_d\] of the associated un-branched cover \[p|_{X \setminus p^{-1}(B_p)} \colon X \setminus p^{-1}(B_p) \longrightarrow Y\setminus B_p.\] 
    Here $S_d$ denotes the permutation group of $d$ elements. 
\end{definition}
\begin{remark}
    The representation $\omega_p$ depends on a choice of base point and of a numbering of the sheets, and is therefore only well defined up to conjugation in $S_d$; this ambiguity will matter in Proposition \ref{prop:liftcrit}. 
\end{remark}
\begin{remark}
Given a smooth, compact, connected $n$-manifold $Y$, a properly embedded smooth $(n-2)$-submanifold $B \subset Y$ and a representation $\omega \colon \pi_1(Y \setminus B) \rightarrow S_d$, there exists a smooth compact $n$-manifold $X$ and a $d$-fold branched cover $p\colon X\to Y$ whose branch set $B_p$ is $B$ and whose monodromy representation $\omega_p$ coincides with $\omega$. Moreover, $X$ is unique up to diffeomorphism and $p$ is unique up to strong equivalence.
\end{remark}
In this paper we mostly work with \textsl{simple} branched covers. This means that we always assume that the monodromy representation $\omega_p$ sends meridians of the branch set $B_p$ to transpositions in $S_d$, where $d$ is the degree of $p$. This is equivalent to assuming that the preimage of each element in $B_p$ consists of exactly $d-1$ points. If $Y$ is simply connected, $\omega_p$ is completely described by its values on a system of meridians for $B_p$ generating $\pi_1(Y \setminus B_p)$. The assignment of a permutation to a system of meridians of $B_p$ is usually called a \textsl{labeling} for $B_p$. When $Y$ has non-trivial fundamental group, the situation is slightly more complicated, since $\pi_1(Y \setminus B_p)$ is not generated by just the meridians of $B_p$: one must also record the monodromy of a set of generators of $\pi_1(Y)$. In what follows, we often work with branched covers over $\mathbb{RP}^2$. We now outline some basic facts about such covers. 

Let $F$ be a closed, connected surface and $p \colon F \rightarrow \mathbb{RP}^2$ a simple $d$-fold branched cover. Writing $B_p=\{b_1,\dots,b_n\}$ and letting $\ell$ denote the class of the standard $\RP^1$, we have the presentation
\[\pi_1(\RP^2 \setminus B_p) = \langle\, \mu_1, \dots, \mu_n, \ell \mid \ell^2=\mu_1\cdots \mu_n \,\rangle,\]
where $\mu_i$ is a meridian of $b_i$ in a Hurwitz system based at a chosen base point. Thus $\omega_p$ is determined by the $n+1$ permutations $\omega_p(\mu_i)$ and $\omega_p(\ell)$, subject to the single relation $\omega_p(\ell)^2=\omega_p(\mu_1)\cdots\omega_p(\mu_n)$, and $p$ is simple exactly when every $\omega_p(\mu_i)$ is a transposition. We write
\[\mathbb{RP}^2= D^2 \cup_\partial Mb\]
where the Möbius band $Mb$ is a tubular neighborhood of the standard $\mathbb{RP}^1\subset \mathbb{RP}^2$. Moreover, we choose a base point of $\pi_1(\mathbb{RP}^2\setminus B_p)$ in $D^2 \cap Mb$. Without loss of generality, we can always assume that $B_p\cap Mb=\emptyset$. In this way, we can view $p$ as the gluing of two covers
\begin{equation}\label{eq:decomposition}
    p_1 \colon F_1 \longrightarrow D^2 \quad \text{and} \quad p_2 \colon F_2 \longrightarrow Mb,
\end{equation}
where $p_1$ is a simple cover branched over $B_p$ whose monodromy is induced by $\omega_p$ and $p_2$ is a regular cover. In particular, $p_2$ is fully determined by $\omega_{p_2}([\RP^1])$, the monodromy of the core of the Möbius band. 

Going the other way, suppose we start with two covers $p_1\colon F_1\rightarrow D^2$ and $p_2\colon F_2\rightarrow Mb$, where $\partial F_1\cong \partial F_2$, and we wish to obtain a cover $p\colon F_1\cup_\partial F_2 \rightarrow \RP^2$. A necessary and sufficient condition for such a $p$ to exist is that the restrictions of $p_1$ and $p_2$ to $\partial F_1$ and $\partial F_2$ are equivalent. One can then use the diffeomorphisms given by the equivalence to perform the desired gluing.

\begin{remark}
    Since $w_1(F)=p^*(w_1(\mathbb{RP}^2))$ (see \cite[Section 1]{bais2025branched}) and $w_1(\mathbb{RP}^2)=\PD([\mathbb{RP}^1])$, $F_2\subset F$ is nothing but a tubular neighborhood of a representative of the Poincar\'e dual of $w_1(F)$, which we will often abbreviate to $\PD(w_1(F))$.
\end{remark}

\begin{remark}\label{rmk:RH}
    The Riemann--Hurwitz formula implies that \[\chi(F)=d\,\chi(\RP^2)-|B_p|=d-|B_p|,\] so that
    \[|B_p| = d-\chi(F).\]
 In particular, the number of branch points of a simple $d$-fold branched cover $p \colon F \to \RP^2$ is determined by the degree $d$ and the Euler characteristic of $F$.
\end{remark}

In this paper, we will repeatedly make use of the following fact, which is a special case of \cite[Theorem 1.1]{GK2} for simple branched covers of $\mathbb{RP}^2$ (see Remark \ref{rmk:equivRP2}).

\begin{theorem}[Gabai--Kazez \cite{GK2}]\label{thm:gabai_kazez} 
    Let $F$ be a closed connected non-orientable surface. Two simple branched covers $f,g\colon F\rightarrow \RP^2$ are equivalent if and only if they have the same degree.
\end{theorem}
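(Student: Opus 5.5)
The forward implication is immediate. If $k\circ f = g\circ h$ with $h,k$ diffeomorphisms, then $h$ carries each fiber of $f$ bijectively onto a fiber of $g$. Hence the generic fibers have the same cardinality, so $\deg f=\deg g$.

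For the converse, I would reduce to \cite[Theorem 1.1]{GK2} by checking its hypotheses in this setting. That theorem classifies simple branched covers $X\to Y$ of closed surfaces up to equivalence. The data it uses are the degree, the number of branch points, and a compatibility condition recording which classes of $H^1(Y;\Z_2)$ pull back to $w_1(X)$, i.e.\ how orientation characters interact.

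The steps, in order:
\begin{enumerate}
\item Fix $d=\deg f=\deg g$. By Remark \ref{rmk:RH}, $|B_f| = d-\chi(F) = |B_g|$, so the branch sets have the same cardinality.
\item Since $H^1(\RP^2;\Z_2)\cong\Z_2$ is generated by $w_1(\RP^2)=\PD([\RP^1])$, and $f^*w_1(\RP^2)=w_1(F)=g^*w_1(\RP^2)$, the two covers induce the same map $H^1(\RP^2;\Z_2)\to H^1(F;\Z_2)$ (up to the diffeomorphism of $F$ to be produced). So any orientation-type invariant in \cite{GK2} agrees automatically.
\item With these invariants equal, \cite[Theorem 1.1]{GK2} yields diffeomorphisms $h$ of $F$ and $k$ of $\RP^2$ with $k\circ f=g\circ h$.
\item By Remark \ref{rmk:equivRP2}, $k$ is isotopic to the identity, so the equivalence is even strong.
\end{enumerate}

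If one prefers a direct argument, or needs to confirm that the hypotheses of \cite{GK2} are met, the alternative is a normal form for monodromies via the decomposition \eqref{eq:decomposition}. Braid group moves (Hurwitz moves on the $\mu_i$, plus the moves of points across the Möbius band, which change $\omega(\ell)$) together with global conjugation in $S_d$ would bring the tuple $(\omega(\mu_1),\dots,\omega(\mu_n),\omega(\ell))$ into a standard form depending only on $d$ and $n$. Transitivity and the relation $\omega(\ell)^2=\prod\omega(\mu_i)$ would be used in this normalization.

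The main obstacle is this normalization of $\omega(\ell)$, i.e.\ showing that every admissible permutation pattern for the core of the Möbius band can be reached. This is precisely the content imported from \cite{GK2}, so in the proof I would cite it rather than redo it.
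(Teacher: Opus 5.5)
Your proposal is correct and takes the same route as the paper, which simply invokes \cite[Theorem 1.1]{GK2} and uses Remark \ref{rmk:equivRP2} to identify equivalence with strong equivalence over $\RP^2$. Your extra checks are sound and make explicit what the paper leaves implicit: the forward direction, $|B_f|=|B_g|$ via Riemann--Hurwitz, and $f^*w_1(\RP^2)=w_1(F)=g^*w_1(\RP^2)$ (which also forces $f_*$ and $g_*$ to be surjective onto $\pi_1(\RP^2)\cong\Z_2$, since $F$ is non-orientable).
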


\begin{remark}
    Given a simple $d$-fold branched cover $p \colon F \rightarrow \RP^2$, we have
    \begin{align*}
    \chi(F) \equiv \langle w_2(F),[F] \rangle = \langle w_1^2(F),[F] \rangle =\langle p^*(w_1^2(\RP^2)),[F]\rangle= \\ =\langle w_1^2(\RP^2),p_*[F]\rangle= d\langle w_1^2(\RP^2),[\RP^2]\rangle \equiv d \bmod{2}.
    \end{align*}
    Here we are using the fact that $p^*(w_1(\RP^2))=w_1(F)$, the Wu formula $w_1^2=w_2$ on a closed surface together with $\langle w_2(F),[F]\rangle\equiv\chi(F) \pmod 2$ and that $p$ has topological degree $d$. So, the parity of the degree of $p$ is constrained by the parity of the Euler characteristic (or, equivalently, of the non-orientable genus) of $F$.
\end{remark}

\subsection{Stabilizations of branched covers over the real projective plane}

Given a $d$-fold simple branched cover $p \colon F_1 \rightarrow F_2$ with $F_2=S^2$ (resp. $F_2=D^2$), one can \textsl{stabilize} $p$ to get a $(d+1)$-fold simple branched cover $p'\colon F_1 \rightarrow F_2$. At the level of labeled branch sets, this amounts to adding to $B_p$ two (resp. one) new branch points with labeling $(i \, d+1)$ for some $i \in \{1, \dots, d\}$. In order to increase the degree of a given cover of $\mathbb{RP}^2$, we will make use of the following, which is Proposition 4.2 in \cite{bais2025branched} restricted to dimension two.
 \begin{proposition}\label{prop:stab}
      Let $X$ be a closed surface and 
    \[p\colon X \longrightarrow \RP^{2}\]
    a $d$-fold branched cover. Then, $p$ can be stabilized to a $(d+2)$-fold branched cover 
    \[p'\colon X \longrightarrow \RP^{2},\]
    where the branch set $B_{p'}$ is the union of $B_p$ and two separated new branch points with monodromy $(i\;d+1)$ for some $i \in \{1, \dots, d\}$, and the monodromy of $\RP^1$ is given by
    \[\omega_{p'}([\RP^1]) = \omega_p([\RP^1]) \cdot (d+1\;d+2).\]
 \end{proposition}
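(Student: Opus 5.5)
The approach is to build $p'$ from the decomposition \eqref{eq:decomposition}, $\RP^2 = D^2 \cup_\partial Mb$, and to stabilize each half separately. Then we check that the new pieces glue back together and that the total space is still diffeomorphic to $X$. Write $p$ as the gluing of $p_1\colon F_1\to D^2$, simple and branched over $B_p$, and the unbranched cover $p_2\colon F_2\to Mb$ determined by $\sigma=\omega_p([\RP^1])\in S_d$. Recall that $\omega_p(\partial D^2)=\omega_p(\mu_1\cdots\mu_n)=\sigma^2$.

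On the Möbius band side, let $p_2'\colon F_2'\to Mb$ be the unbranched $(d+2)$-fold cover with core monodromy $\sigma'=\sigma\cdot(d+1\;d+2)$. This is the disjoint union of $p_2$ with the connected double cover of $Mb$ corresponding to the transposition $(d+1\;d+2)$. That double cover is an annulus $A$ mapping to $Mb$ as the orientation double cover, and its boundary is two circles, each mapping with degree $1$ onto $\partial Mb$. The boundary monodromy of $p_2'$ is $\sigma'^2=\sigma^2$, with the sheets $d+1$ and $d+2$ fixed.

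On the disc side, we stabilize $p_1$ twice in the sense recalled above. We add two separated branch points $b_{n+1},b_{n+2}$ in $D^2$, each labelled $(i\;d+1)$, and attach the sheets $d+1,d+2$. More precisely, the $(d+2)$-fold cover $p_1'$ has monodromies $\omega(\mu_j)=\omega_p(\mu_j)$ for $j\le n$ and $\omega(\mu_{n+1})=\omega(\mu_{n+2})=(i\;d+1)$, with sheet $d+2$ unbranched. The relation $\ell^2=\mu_1\cdots\mu_{n+2}$ then holds, because $(i\;d+1)^2=1$ gives $\omega(\mu_1\cdots\mu_{n+2})=\sigma^2=\sigma'^2$. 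So the boundary restrictions of $p_1'$ and $p_2'$ agree and glue to a $(d+2)$-fold cover $p'\colon X'\to\RP^2$ with exactly the prescribed branch set and monodromy. As a check, $p'$ is simple with $n+2$ branch points, so Remark \ref{rmk:RH} gives $\chi(X')=(d+2)-(n+2)=\chi(X)$.

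The main step is to identify $X'$ with $X$. Topologically, $F_1'$ is $F_1$ with a disc $D_{d+1}$ (sheet $d+1$) attached to sheet $i$ by a band, since two branch points with the same transposition form an elementary stabilization. This is $F_1$ with a tube removed and replaced, that is, $F_1$ minus a disc plus an annulus. It also contains the separate disc $D_{d+2}$ (sheet $d+2$). So, relative to $F_1$, $F_1'$ acquires two extra boundary circles, one on the $F_1$-component and one on $D_{d+2}$. Gluing $F_2\sqcup A$ along these identifications, the annulus $A$ joins a boundary circle of the modified $F_1$-piece to $\partial D_{d+2}$, which caps it off. The result is diffeomorphic to $X$, because gluing the annulus and the disc just refills the disc removed from sheet $i$.

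I expect the obstacle to be keeping track of which boundary circle of $F_1'$ lies over which lift of $\partial D^2$, and making sure that the annulus $A$ really connects the new circle on the stabilized sheet to $\partial D_{d+2}$. An orientation-reversing identification there must not create a handle or a crosscap. I would settle this by a local picture near an arc in $D^2$ from $b_{n+1}$ to $b_{n+2}$, or, more directly, by Euler characteristic and connectivity. Connectivity holds because sheet $d+1$ joins sheet $i$ through the branch points, and sheet $d+2$ joins sheet $d+1$ through $\sigma'$. Orientability is unchanged, since $w_1(X')=p'^*w_1(\RP^2)$ and $X$ is non-orientable exactly when $p$ has a non-trivial pullback. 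Classification of surfaces then gives $X'\cong X$, since $\chi$ and orientability are preserved; $X'$ is non-orientable whenever $X$ is. If $X$ is orientable (possible when $\sigma$ gives even pullback), the same argument applies because $A$ only adds orientation-compatible gluing along the trivially covered sheets.
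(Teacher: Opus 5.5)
The paper does not prove this proposition. It quotes it as the two-dimensional case of \cite[Proposition 4.2]{bais2025branched}, so there is no in-paper argument to compare with, and your proposal has to stand on its own. It does: your proof is correct and self-contained.

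Your labels satisfy $\ell^2=\mu_1\cdots\mu_{n+2}$, because the two new meridians are adjacent in the Hurwitz system and $\sigma'^2=\sigma^2$. Your cut-and-paste identification $X'\cong (X\setminus\mathrm{disc})\cup_{S^1}(A\cup D_{d+2})\cong X$ is also right.

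The obstacle you anticipate in the last paragraph is not real. $A\cup D_{d+2}$ is a disc whatever the gluing map, and capping a boundary circle of a surface with a disc gives the same surface up to diffeomorphism for every identification. So no handle or crosscap can appear. Which circles $A$ joins is forced by the sheet bookkeeping, since $\sigma'^2$ fixes $d+1$ and $d+2$.

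A tidier way to say the same thing is to work over $\RP^2\setminus\Delta$, where $\Delta$ is a small disc containing only the two new branch points. There the cover is $p$ together with the universal cover $S^2\to\RP^2$ on sheets $d+1,d+2$. The two points labelled $(i\;d+1)$ replace the discs over $\Delta$ in sheets $i$ and $d+1$ by an annulus, i.e.\ they perform a connected sum. Hence $X'\cong X\# S^2\cong X$, and neither the classification of surfaces nor any orientability discussion is needed.

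A few inaccuracies are worth fixing:
\begin{itemize}
\item $D_{d+1}$ is joined to $F_1$ by a tube, not a band. A band would return $F_1$ itself rather than $F_1$ with one extra boundary circle; your subsequent boundary count is nevertheless right.
\item What you do over $D^2$ is not ``two stabilizations in the sense recalled above.'' Each disc-stabilization in the paper adds one branch point per new sheet. Your explicit description of the labels is unambiguous, so this is only a wording issue.
\item The classification fallback is redundant, and as written its orientability step only restates $w_1(X')=p'^*w_1(\RP^2)$. If you keep it, use a $\Z_2$-grading of the sheets that is preserved by the meridians and reversed by $\ell$. Extend a grading for $p$ by putting $d+1$ in the class of $i$ and $d+2$ in the other class; conversely, restrict a grading for $p'$ to $\{1,\dots,d\}$.
\item You assume $p$ is simple, which the statement does not. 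The only place you use it is the Euler characteristic check, which works the same way for non-simple covers.
\end{itemize}
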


\begin{remark}
    Sometimes it will be convenient to increase the degree of a given cover $p \colon F \rightarrow \mathbb{RP}^2$ without directly appealing to Proposition \ref{prop:stab}. More precisely, we will often work with covers defined as the gluing of a branched cover of $D^2$ and a connected regular $d$-fold cover of $Mb$. To get a $(d+2)$-fold cover with the same domain, we just stabilize the restriction to $D^2$ twice and change the monodromy of the core of $Mb$ to a cycle of length $d+2$. In any case, Theorem \ref{thm:gabai_kazez} implies that this alternative stabilization procedure gives a cover which is equivalent to the one defined in Proposition \ref{prop:stab}.
\end{remark}
 
\section{The mapping class group of a non-orientable surface}\label{sec:3}

Let $F$ be a surface. We let $\M(F)$ denote the \textsl{mapping class group} of $F$, i.e. the group of isotopy classes of homeomorphisms of $F$. We make the following further assumptions about elements of $\M(F)$, unless stated otherwise.

\begin{itemize}
    \item[-] When $\partial F\neq \emptyset$, we assume that elements of $\M(F)$ fix $\partial F$ pointwise.

    \item[-] When $F$ has a set of marked points $S$, we assume that elements of $\M(F)$ fix $S$ setwise. We may also denote this mapping class group as $\M(F,S)$, and its representatives as self-homeomorphisms of $(F,S)$.

    \item[-] When $F$ is orientable, we assume that elements of $\M(F)$ preserve a fixed orientation.
\end{itemize} 

We denote a Dehn twist along a simple 2-sided loop $\gamma$ by $t_\gamma$, and let $[t_\gamma]$ denote its class in the mapping class group $\M(F)$. Recall that if $F$ is non-orientable and $\gamma\subset F$ is one-sided, the Dehn twist along $\gamma$ is not defined. We let $\T(F)$ denote the \textsl{twist subgroup} of $\M(F)$, i.e. the subgroup of $\M(F)$ generated by Dehn twists.

It is a standard fact that, when $F$ is orientable and has no marked points, its mapping class group $\M(F)$ is generated by Dehn twists, i.e. $\M(F) = \T(F)$. On the other hand, when $F$ is non-orientable of genus $g(F)\geq 2$, $\T(F)$ is a proper subgroup of $\M(F)$ of index two, see \cite{Lickorish_1963, Lickorish_1965}. This fact was first shown in \cite{Lickorish_1963}, where Lickorish found an alternative generating set for the mapping class group $\M(K)$ of the Klein bottle $K$. More precisely, Lickorish showed that $\M(K)$ is generated by a Dehn twist along $\gamma$, the unique isotopy class of $2$-sided non-separating curves on $K$, along with a \textsl{$y$-homeomorphism}, which is defined on an embedded punctured Klein bottle. Up to the action of $\T(F)$, this diffeomorphism can be described as follows. We regard the Klein bottle $K$ as the mapping torus of a reflection on $S^1$.
The map
    \begin{align*}
    	I\times S^1 &\longrightarrow I\times S^1\\
    	(s,t) &\longmapsto (1-s,t)
    \end{align*}
    descends to an involution of the Klein bottle \[y_K \colon K\longrightarrow K.\]

    Let $D\subset K$ be an embedded 2-disk. Assume that $y_K$ has been isotoped to fix $D$ pointwise. Set $K^\circ = K \setminus \Int D$, and let \[y \colon K^\circ \to K^\circ\] be the restriction of $y_K$ to $K^\circ$. This defines a self-diffeomorphism of the punctured Klein bottle $K^\circ$, which we call a \textsl{$y$-homeomorphism}. 
    
    More generally, let $F$ be a surface containing an embedded copy of $K^\circ$. A self-ho\-me\-o\-mor\-phism of $F$ is a \textsl{$y$-homeomorphism} if it is isotopic to a homeomorphism that coincides with $y$ on the embedded $K^\circ$ and with the identity map on its complement.

The $y$-homeomorphism depends on a specified embedding of $K^\circ$ in $F$. There are often many different embeddings, and these give rise to different $y$-homeomorphisms. When $F \cong \RP^2$ there cannot exist an embedding $K^\circ \subset F$, and so there are no $y$-homeomorphisms on $F$. Indeed, in this case we have that $\M(F)$ is trivial. If $F$ is a closed connected non-orientable surface and $g(F)>1$, $\M(F)$ is generated by Dehn twists and a single $y$-homeomorphism, and $\T(F)$ forms an index two subgroup in $\M(F)$ \cite{Lickorish_1963}. In \cite{Chillingworth_1969}, Chillingworth found a finite set of generators for $\M(F)$, which we will largely use in Section \ref{sec:lift_chill}. See also \cite{BirmanChillingworth}, where this generating set is recovered from the mapping class group of the orientation double cover, and \cite{Korkmaz2002, Szepietowski2006} for later refinements.

\begin{theorem}[Chillingworth \cite{Chillingworth_1969}]\label{thm:chillingworth_69}
    Let $F$ be a closed non-orientable surface as described in Figure \ref{fig:chill_generators}, and set $N=\lfloor (g(F)-1)/2\rfloor$, so that $g(F)=2N+1$ if $g(F)$ is odd and $g(F)=2N+2$ if $g(F)$ is even. Then $\M(F)$ is generated by a single $y$-homeomorphism, plus Dehn twists along the following curves: \begin{enumerate}[(i)] 
        \item if $g(F)$ is even, as in Figure \ref{fig:chill_generators}(b), the curves $\alpha_i$, $\beta_i$ and $\gamma_i$, for $i=1, \dots, N$. 
        
        \item if $g(F)$ is odd, as in Figure \ref{fig:chill_generators}(a), the curves $\alpha_i$, $\beta_i$ and $\gamma_i$, for $i=1, \dots, N-1$.
    \end{enumerate}
\end{theorem}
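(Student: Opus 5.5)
This is Chillingworth's classical theorem, which the paper cites rather than reproves. The proof I would follow has two parts: first show that $\T(F)$ is generated by Dehn twists along the listed curves, then add one $y$-homeomorphism.

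\emph{Step 1: reduce to twists.} By Lickorish \cite{Lickorish_1963, Lickorish_1965}, $\M(F)$ is generated by Dehn twists together with a single $y$-homeomorphism, and $\T(F)$ has index two. It therefore suffices to show that the twists along the listed $\alpha_i,\beta_i,\gamma_i$ generate every Dehn twist $t_c$ with $c$ a $2$-sided simple closed curve.

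\emph{Step 2: reduce to finitely many orbit representatives.} Let $G$ be the subgroup generated by the listed twists. Since $h t_c h^{-1}=t_{h(c)}^{\pm1}$, it is enough to check two things. First, every $2$-sided curve is carried by an element of $G$ to one of finitely many model curves. Second, the twist along each model curve lies in $G$.

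The topological types of $2$-sided curves are determined by a few pieces of data:
\begin{itemize}
\item[] whether the curve separates;
\item[] the homeomorphism types of the complementary pieces;
\item[] whether each complement is orientable.
\end{itemize}
For a nonseparating curve, the complement may be orientable or not. For a separating curve, we record the genera and orientability of the two sides. Acting with $G$, one first normalizes homology, or rather the $\Z/2$-intersection data, using transvections. One then uses a Lickorish-style chain argument: if curves $a,b$ meet once, then $t_at_b(a)=b$. This connects any two curves of the same type.

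\emph{Step 3: twists along separating and non-standard curves.} Once the type-classification is done, express the twists along the remaining model curves as products of the generators. For curves that bound a subsurface built from the chain, use the chain and lantern relations. The delicate cases are curves whose complement is non-orientable, for example a curve bounding a Möbius-band pair. For these, use the crosscap-slide relations together with the double-cover picture of \cite{BirmanChillingworth}, lifting to the orientable double cover where Humphries/Lickorish generation is available.

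\emph{Main obstacle.} I expect the hardest step to be the transitivity of $G$ on $2$-sided curves whose complement is non-orientable, and in particular handling the last crosscap. This is exactly where the odd and even cases differ, which is why the ranges $i\le N$ and $i\le N-1$ appear in the statement. My first attempt would be induction on genus: cut along a one-sided curve, apply the orientable-with-boundary generation result, and then reglue.
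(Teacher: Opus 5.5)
The paper does not prove this theorem (it is quoted from Chillingworth), but right after the statement it records the fact that breaks your plan: $\T(F)$ is \emph{not} generated by the Dehn twists along the listed curves. Your Step 1 reduces everything to exactly that false claim, that $G=\langle t_{\alpha_i},t_{\beta_i},t_{\gamma_i}\rangle$ contains every Dehn twist.

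The obstruction is structural. All the listed curves lie in one fixed orientable subsurface (this is how they are used in Section~\ref{sec:lift_chill}):
when $g(F)$ is odd, this is $\Sigma_N^1$, whose complement is a M\"obius band with one-sided core $\mu$;
when $g(F)$ is even, this is $\Sigma_N^2$, whose complement is an annular neighborhood of a two-sided curve $c$ dual to $w_1(F)$.
Hence every element of $G$ has a representative supported in that subsurface. Such a representative fixes $\mu$ (resp.\ $c$) and carries curves disjoint from $\mu$ (resp.\ $c$) to curves disjoint from it.

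So the transitivity you want in Step 2 is false for $G$. In odd genus, all two-sided nonseparating curves have the same topological type. Yet no element of $G$ moves $\alpha_1\subset\Sigma_N^1$ to a two-sided nonseparating curve that meets $\mu$ essentially. Likewise, twists along two-sided curves that cannot be isotoped off $\mu$ (resp.\ $c$) move it, so they are not in $G$. What you call the main obstacle is therefore not a hard step but a false one. Your fallback induction (cut along a one-sided curve, use generation for the orientable surface with boundary, reglue) has the same defect: it only produces mapping classes preserving that curve.

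The missing idea is that the $y$-homeomorphism $Y$ is needed twice: once to leave $\T(F)$, and once, through conjugation, to generate $\T(F)$ itself. The correct reduction is $\T(F)\subseteq\langle G,Y\rangle$. We have $Y t_a Y^{-1}=t_{Y(a)}^{\pm 1}$, and $Y$ is a crosscap slide, which drags the part of $a$ meeting its support $K^\circ$ through a crosscap. So conjugating listed twists by words in $Y$ and the listed twists gives twists along curves passing through the crosscap(s). You must show that these supply what $G$ lacks. Examples are the extra twists $t_\eta$, $t_\psi$ (and $t_\chi$ in even genus), which the paper quotes from Chillingworth in Section~\ref{sec:lift_chill} as generating $\T(F)$ together with the listed ones. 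Your orbit argument of Step 2 should then be run for $\langle G,Y\rangle$, which can move curves across the crosscap, rather than for $G$.

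Two smaller points. First, the crosscap-slide relations you invoke in Step 3 already involve $Y$, which contradicts your twist-only framing. Second, passing to the orientable double cover $\widetilde F$ does not give generation for free. Generation of $\M(\widetilde F)$ by Humphries twists says nothing directly about generating the subgroup of mapping classes commuting with the deck involution, which is what corresponds to $\M(F)$.
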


 For our purposes, we do not need to specify the support of Chillingworth's generating $y$-homeomorphism, though that may be found in \cite{Chillingworth_1969}. It is worth noting that $\T(F)$ is \textsl{not} generated solely by Dehn twists along the curves listed in Theorem \ref{thm:chillingworth_69}. This will be further explored in Section \ref{sec:lift_chill}.

\begin{figure}
    \centering
     \begin{overpic}[width=0.75\linewidth]{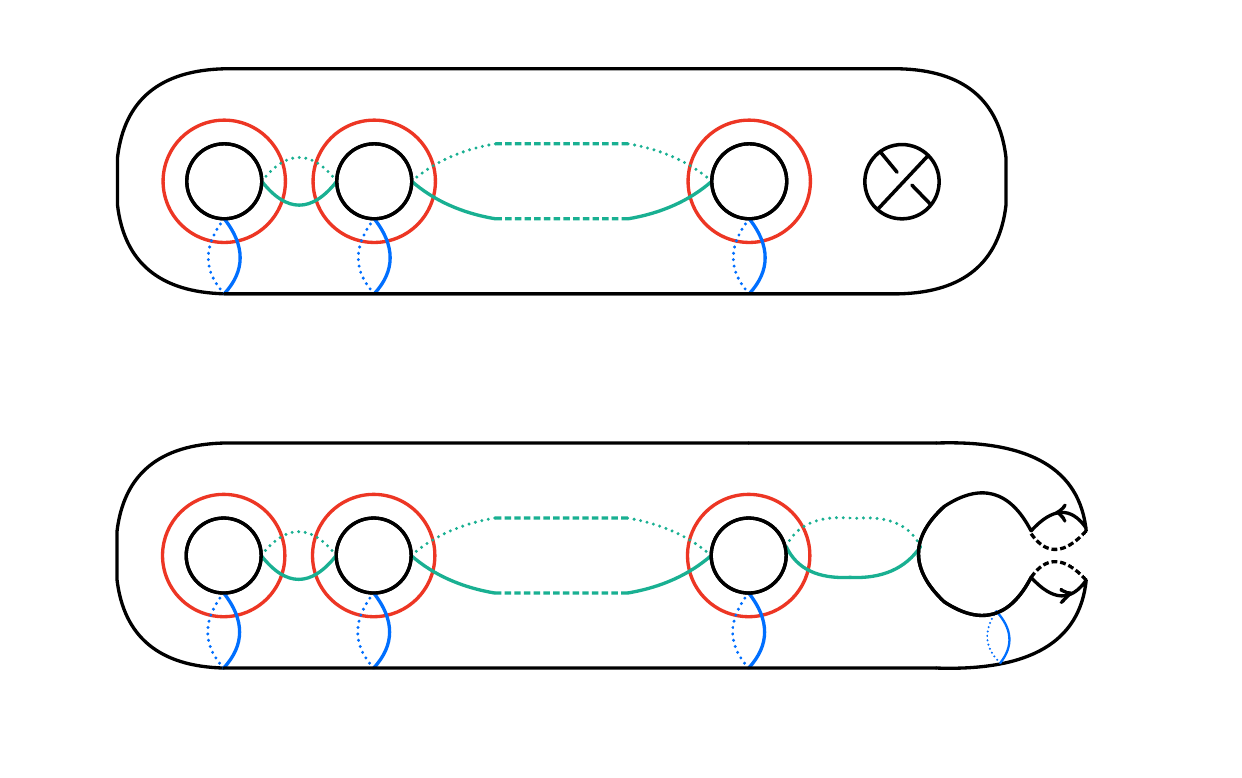}
        \put(12,53){\textcolor{red}{\small{$\alpha_1$}}}
        \put(24,53){\textcolor{red}{\small{$\alpha_2$}}}
        \put(54,53){\textcolor{red}{\small{$\alpha_N$}}}
        \put(13,41){\textcolor{blue}{\small{$\beta_1$}}}
        \put(25,41){\textcolor{blue}{\small{$\beta_2$}}}
        \put(55,41){\textcolor{blue}{\small{$\beta_N$}}}
        \put(22.5,44){\textcolor{PineGreen}{\small{$\gamma_1$}}}
        \put(35,44){\textcolor{PineGreen}{\small{$\gamma_2$}}}
        \put(48,43){\textcolor{PineGreen}{\small{$\gamma_{N-1}$}}}
        \put(12,23){\textcolor{red}{\small{$\alpha_1$}}}
        \put(24,23){\textcolor{red}{\small{$\alpha_2$}}}
        \put(54,23){\textcolor{red}{\small{$\alpha_N$}}}
        \put(13,11){\textcolor{blue}{\small{$\beta_1$}}}
        \put(25,11){\textcolor{blue}{\small{$\beta_2$}}}
        \put(55,11){\textcolor{blue}{\small{$\beta_N$}}}
        \put(22.5,14){\textcolor{PineGreen}{\small{$\gamma_1$}}}
        \put(35,14){\textcolor{PineGreen}{\small{$\gamma_2$}}}
        \put(48,13){\textcolor{PineGreen}{\small{$\gamma_{N-1}$}}}
        \put(67,14){\textcolor{PineGreen}{\small{$\gamma_N$}}}
        \put(72,11){\textcolor{blue}{\small{$\beta_{N+1}$}}}
    \end{overpic}
\caption{Chillingworth's generators for the mapping class group of a closed, non-orientable surface.}\label{fig:chill_generators}

\end{figure}

\section{Projection diagrams for the generators of the twist subgroup}\label{sec:4}

\begin{definition}
    Let $p\colon F\rightarrow \Sigma$ be a simple $d$-fold branched cover between surfaces. A \textsl{projection diagram} for $p$ is a picture of $\Sigma$, decorated with the branch set $B_p$ and a collection of embedded arcs $\{\alpha_i\}$, such that\begin{itemize}
        \item[-] $\Sigma \setminus (B_p \cup \bigcup_i \alpha_i)$ is connected and simply connected; the restriction of $p$ over it is hence a trivial $d$-fold cover, so that its $d$ sheets may be numbered;

        \item[-] each arc $\alpha_i$ is labeled with an element $\sigma_i\in S_d$, namely the permutation by which the sheet numberings on the two sides of $\alpha_i$ differ.
    \end{itemize} The collection $\{\alpha_i\}$ is a \textsl{cut system} for $p$, and will sometimes be denoted $C_p$. Equivalently, $F$ is recovered from $d$ copies of $\Sigma \setminus (B_p\cup \bigcup_i\alpha_i)$ by regluing the two sides of the $i$-th cut $\alpha_i$ according to $\sigma_i$. The labels $\sigma_i$ are determined by $\omega_p$ once the numbering of the sheets is fixed.
\end{definition}

Projection diagrams are frequently used for branched covers over $D^2$ or $S^2$. In these cases, the cut system is a collection of arcs that either connect two branch points with the same monodromy or connect a branch point to $\partial \Sigma$, and each arc is labeled with a transposition. We will be working with projection diagrams over $\RP^2$, and we will depict them by decomposing $\RP^2$ as $D^2\cup Mb$, with all the branch points contained in $D^2$. The cut system will associate transpositions to all the arcs on $D^2$, and a more general element of $S_d$ to a proper arc in $Mb$ dual to the core of the band; the label of that last arc is precisely $\omega_p([\RP^1])$.

Let $\gamma\subset F$ be a simple loop and suppose that its image $p(\gamma)\subset \Sigma$ is self-transversely immersed. Then $\gamma$ can be fully recovered from its projection $p(\gamma)$ by assigning to an arc in $p(\gamma)\setminus C_p$ a label corresponding to the number of the sheet where its pre-image lies. Though not entirely necessary, we sometimes label multiple arcs in $p(\gamma) \setminus C_p$ with their corresponding sheets for convenience.

\begin{remark}\label{rmk:stab_curves} 
     Given $p\colon F\to \Sigma$ and $\gamma\subset F$ as above, we will sometimes stabilize $p$ to perform certain isotopies to $\gamma$ through its image $p(\gamma)$ on the projection diagram. When performing these stabilizations, we will always assume that the new branch points, along with any cuts associated to them, are away from $p(\gamma)$. In particular, whenever we perform stabilizations with the intention of isotoping $\gamma$, we will do it in such a way that the isotopy class of $\gamma \subset F$ is unchanged.
 \end{remark}

\begin{example}\label{eg:diagram1}
    A classical starting point for simple branched covers between surfaces is the double cover associated to the hyperelliptic involution on $\Sigma_g$. Let \[\iota\colon \Sigma_g\longrightarrow \Sigma_g\] be the involution induced by a $\pi$ rotation of $\Sigma_g$ about the axis shown on the left in Figure \ref{fig:diagram1}. This gives rise to a simple branched cover \[p\colon \Sigma_g\longrightarrow S^2\] with $2g+2$ branch points. Since the branched cover is $2$-fold, all branch points have the same monodromy $(1\, 2)\in S_2$. A picture of the preferred projection diagram of $p$ is shown on the right in Figure \ref{fig:diagram1}, along with three loops. The purple loop on the projection diagram has a specified labeling, and so has one pre-image on $\Sigma_g$, which is shown on the left (also in purple). The green loop is unlabeled, and so its pre-image has two components, also shown on the left. For the blue curve, there does not exist a labeling compatible with $B_p$, so this curve does not correspond to a simple loop in $\Sigma_g$.

    \begin{figure}
        \centering
     \begin{overpic}[width=0.75\linewidth]{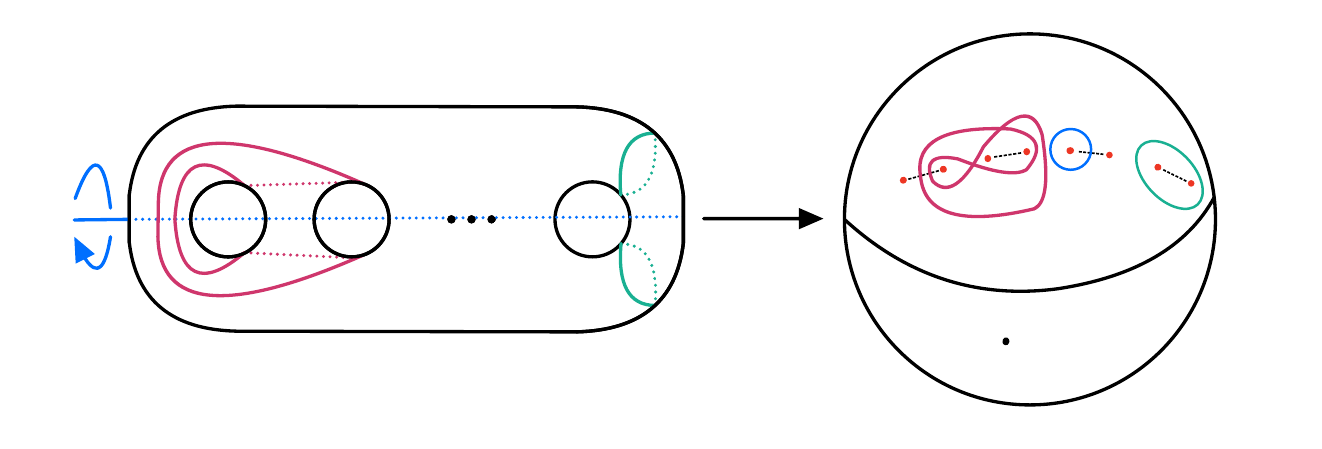}
        \put(7,11){\textcolor{blue}{$\iota$}}
        \put(68,23){\textcolor{purple}{\small{$1$}}}
    \end{overpic}
        \caption{Left: $\Sigma_g$ and the axis of rotation for $\iota$. Right: projection diagram for $p$, along with three curves. 
        All branch points shown have monodromy $(1\,2)$. The cut system is comprised of the dashed gray arcs.}
        \label{fig:diagram1}
    \end{figure}
\end{example}

From this point onward, we will continue constructing examples of projection diagrams for various branched covers. We will build off this first example, and gradually work towards the projection diagrams which will be our main tools in Section \ref{sec:lift_chill}.

\begin{example}\label{eg:diagram2}
    Take the cover $p\colon \Sigma_g\rightarrow S^2$ from Example \ref{eg:diagram1}. We modify $p$ in two different ways to obtain branched covers on $\Sigma_g^2=\Sigma_g \setminus \sqcup_2 \Int D^2$ and $\Sigma_g^1=\Sigma_g \setminus \Int D^2$. \begin{enumerate}
        \item First restrict $p$ to $\mathcal D_p$, the complement of a disk in $S^2$ which is disjoint from $B_p$. In this way, we obtain a $2$-fold cover \[p_1\colon \Sigma_g^2\rightarrow \mathcal D_p\] with $2g+2$ branch points. There exists a cut system for this branched cover such that all cuts are in the disk interior, and the monodromy associated to each cut is $(1\,2)$. Add two stabilizations to $p_1$, one on each boundary component of $\Sigma^2_g$, to obtain a $4$-fold branched cover with $2g+4$ branch points, which we continue to denote by $p_1$. The two new branch points have cuts connecting them to $\partial \mathcal D_p$ and have monodromy $(1 \, 3)$ and $(2\,4)$, respectively. After these stabilizations, $\omega_{p_1}(\partial \mathcal D_p)=(1\,3)(2\,4)$. A projection diagram for $p_1$ is shown on the left in Figure \ref{fig:even_p}.

        \item Next, restrict $p$ to $\mathcal D_q$, the complement of a disk in $S^2$ which contains one branch point in $B_p$. In this way, we obtain a $2$-fold cover \[q_1\colon \Sigma_g^1\rightarrow \mathcal D_q\] with $2g+1$ branch points. It admits a cut system with $g$ pairs of cuts in $\Int \mathcal D_q$ and one cut from the last remaining branch point to $\partial \mathcal D_q$. All cuts have $(1\,2)$ as their monodromy. We stabilize $q_1$ one time by adding a branch point with monodromy $(1\,3)$. With this, the monodromy for $\partial \mathcal D_q$ is $(1\,2\,3)$. A projection diagram for $q_1$ is shown on the left in Figure \ref{fig:odd_p}. 
    \end{enumerate}
\end{example}

\begin{figure}
    \centering
     \begin{overpic}[width=0.75\linewidth]{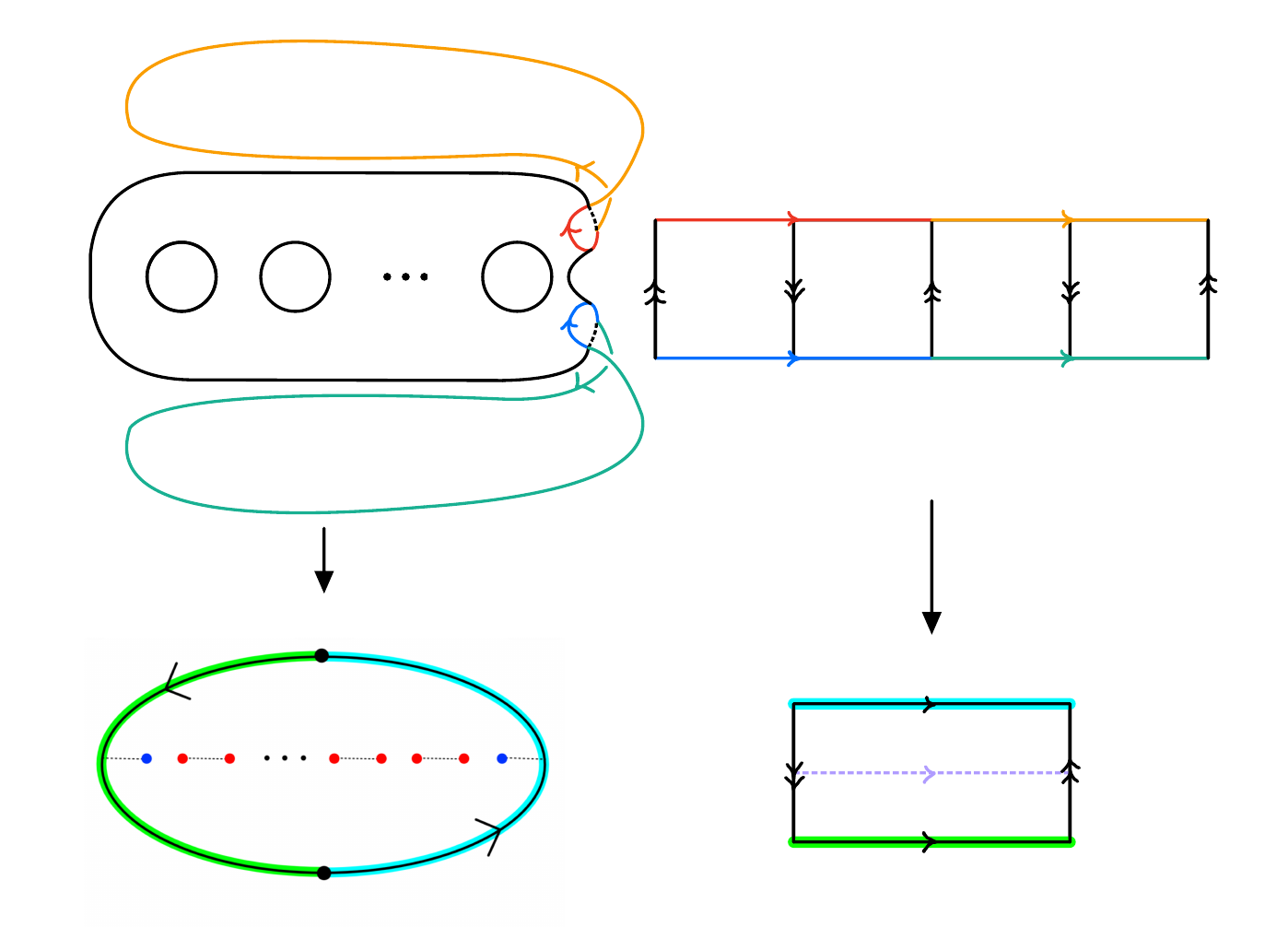}
        \put(65,8){\textcolor{violet}{$\mathbb{RP}^1$}}
        \put(0.5,10){\small{$(1\, 3)$}}
        \put(43.5,10){\small{$(2\, 4)$}}
    \end{overpic}
    \caption{The covers $p_1$ (left) and $p_2$ (right) which together form $p$. In the bottom left, the red branch points have monodromy $(1\,2)$ and the blue branch points have monodromy as given in the labels. In the upper half, the boundaries of the four sheets of each $p_i$ are colored. When $\Sigma_g^2$ is glued to $A$, each sheet of $p_1$ is glued to the sheet of $p_2$ with the same color. In the bottom half, the green and blue highlighting shows how $\partial \mathcal D_p$ is glued to $\partial Mb$.} \label{fig:even_p}

\end{figure}

    \begin{figure}
    \centering
     \begin{overpic}[width=0.75\linewidth]{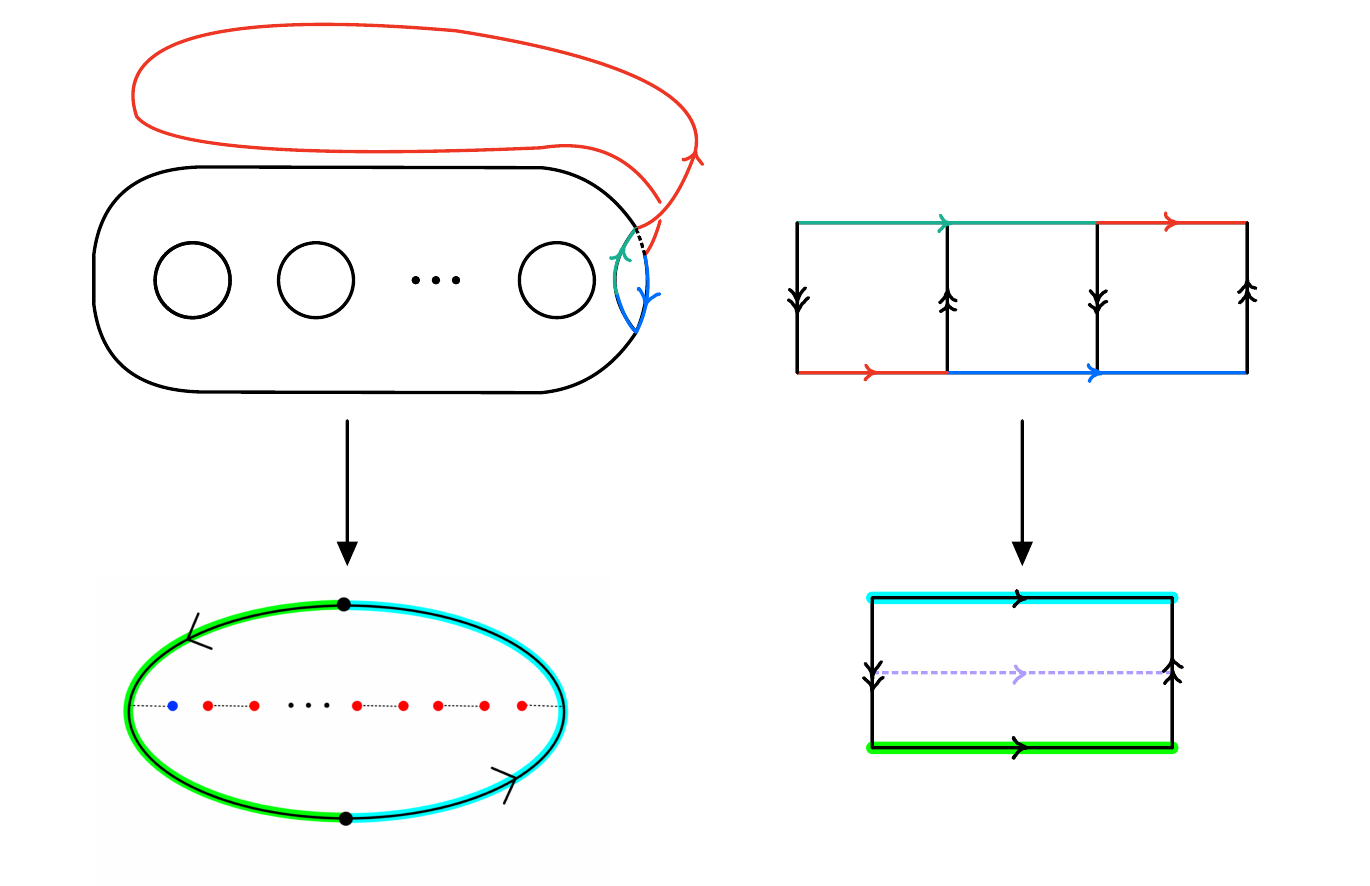}
        \put(69,11.5){\textcolor{violet}{$\mathbb{RP}^1$}}
        \put(3,10.5){\small{$(1\, 3)$}}
    \end{overpic}
    \caption{The covers $q_1$ (left) and $q_2$ (right) which together form $q$. In the bottom left, the red branch points have monodromy $(1\,2)$ and the blue branch points have monodromy as given in the labels. In the upper half, the boundaries of the three sheets of each $q_i$ are colored. When $\Sigma_g^1$ is glued to $Mb$, each sheet of $q_1$ is glued to the sheet of $q_2$ with the same color. In the bottom half, the green and blue highlighting shows how the boundary of $\mathcal D_q$ is glued to $Mb$.} \label{fig:odd_p}

\end{figure}

\begin{example}\label{eg:diagram3} Since this paper is chiefly concerned with branched covers over $\RP^2$, we will next build off the constructions in Example \ref{eg:diagram2} to obtain branched covers of closed non-orientable surfaces.
    \begin{enumerate}
        \item Let $p_1\colon \Sigma_g^2\rightarrow \mathcal D_p$ be the cover from Example \ref{eg:diagram2}(1) and let $p_2\colon A\rightarrow Mb$ be the standard, un-branched $4$-fold cover of an annulus over a Möbius band, i.e.\ the cover with $\omega_{p_2}([\RP^1])=(1\,2\,3\,4)$. 
Assume the sheets of $p_2$ have been labeled so that $\omega_{p_2}(\partial Mb) = \omega_{p_1}(\partial D_p)= (1\,3) (2\,4)$. We may glue $\partial \mathcal D_p$ to $\partial Mb$ and glue their pre-images $\partial \Sigma_g^2$ and $\partial A$ together to get a $4$-fold branched cover of $\RP^2$. The gluing of $\partial \Sigma_g^2$ and $\partial A$ will necessarily preserve orientation along one pair of $S^1$ components and reverse orientation along the other, and so the total space of this branched cover is the closed non-orientable surface $N_{2g+2}$. Let \[
            p\colon N_{2g+2}\longrightarrow \RP^2
        \] denote the branched cover obtained in this way.

        Now that the base is $\RP^2$, drawing a projection diagram is more complicated. We've constructed $p$ with the explicit purpose of mitigating this issue. Recall that $p$ is obtained by quotienting \[
            p_1\sqcup p_2 \colon \Sigma_g^2\sqcup A \longrightarrow \mathcal D_p \sqcup Mb
        \] along its boundaries, and that all of $B_p$ is contained in $\mathcal D_p$. We will adopt the convention that the projection diagram for $p$ is the same as the one for $p_1\sqcup p_2$, plus additional data about how $\partial \mathcal D_p$ and $\partial Mb$ are glued. The bottom of Figure \ref{fig:even_p} shows this projection diagram. The boundaries of $\mathcal D_p$ and $Mb$ are shaded and oriented to show how they are identified. On the top of Figure \ref{fig:even_p}, we see how the gluing of $\partial Mb$ and $\partial \mathcal D_p$ lifts to gluing of $\partial \Sigma_g^2$ and $\partial A$. If an arc $\delta\subset \RP^2$ intersects $\partial \mathcal D_p$, we will assume that it travels ``straight across'' the Möbius band and so re-enters $\mathcal D_p$ antipodally. From the upper-right portion of Figure \ref{fig:even_p}, we see that sheets $1$ and $2$, whose boundaries are colored red and blue, respectively, are ``straight across'' from each other on $A$, as are sheets $3$ and $4$, whose boundaries are orange and green. For this reason, if $\delta$ has labeling $1$ (resp. $3$) when it intersects $\partial \mathcal D_p$, it will re-enter $\mathcal D_p$ antipodally with labeling $2$ (resp. $4$).

        \item Let $q_1\colon \Sigma_g^1 \rightarrow \mathcal D_q$ be the cover from Example \ref{eg:diagram2}(2) and let $q_2\colon Mb\rightarrow Mb$ be the standard un-branched $3$-fold cover of a Möbius band over itself. Similar to last time, we glue $\partial \mathcal D_q$ to $\partial Mb$ and glue their pre-images $\partial \Sigma_g^1$ and $\partial Mb$ together to get a $3$-fold branched cover \[
            q\colon N_{2g+1}\longrightarrow \RP^2.
        \]
        The bottom of Figure \ref{fig:odd_p} shows the projection diagram for $q$, which is once again a decorated copy of $\mathcal D_q\sqcup Mb$. From the upper-right portion of Figure \ref{fig:odd_p}, we see that half of sheet 1 is antipodal to half of sheet 2. Therefore, if an arc $\delta\subset \RP^2$ goes through the blue-highlighted part of $\partial \mathcal D_q$ with labeling 2, it will re-enter antipodally with labeling 1.
    \end{enumerate}
\end{example}

The branched covers $p$ and $q$ in Example \ref{eg:diagram3} will be our main tool in Section \ref{sec:lift_chill}. One fact that we can already deduce about these maps is the following.

\begin{proposition}\label{prop:chill_DT_images}
    Let $F$ be a closed, non-orientable surface and let $h\in \{p,q\}$ be the branched cover from Example \ref{eg:diagram3} that has $F$ as a total space. Let $\delta$ be one of the generating curves for $\M(F)$ listed in Theorem \ref{thm:chillingworth_69}. Then, modulo isotoping $\delta$, $h(\delta)$ is an embedded loop which bounds an embedded disk in $\mathcal D_{h}$ containing at least two branch points in $B_{h}$ with monodromy $(1\,2)$.
\end{proposition}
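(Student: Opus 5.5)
The plan is to locate each Chillingworth curve inside the orientable piece $\Sigma_g^2$ (resp. $\Sigma_g^1$) of $F$. On that piece $h$ restricts to the stabilized hyperelliptic cover $p_1$ (resp. $q_1$) of Example \ref{eg:diagram2}. The claim then reduces to the classical picture of Example \ref{eg:diagram1}: curves on $\Sigma_g$ that are lifts of simple loops around branch points of the hyperelliptic cover.

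The first step is to fix the identification of $F$ with the model in Figure \ref{fig:chill_generators}. Write $F=N_{2g+2}=\Sigma_g^2\cup_\partial A$ in the even case, and $F=N_{2g+1}=\Sigma_g^1\cup_\partial Mb$ in the odd case. Up to diffeomorphism (which is harmless, since Theorem \ref{thm:chillingworth_69} is stated for the surface "as described in Figure \ref{fig:chill_generators}"), we choose the identification so that the following holds:
\begin{itemize}
\item the curves $\alpha_i,\beta_i$ ($i\le N$) and $\gamma_i$ ($i\le N-1$) form a chain of two-sided curves lying in the orientable subsurface $\Sigma_g^2$ (resp. $\Sigma_g^1$);
\item each of these curves meets its neighbours in the chain once and the others not at all.
\end{itemize}
In the odd case this uses $g=N$. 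In the even case, the extra curves $\gamma_N$ and $\beta_{N+1}$ run toward the crosscap part. Since $A$ is an annulus, $\Sigma_g^2\cup A$ is $\Sigma_g$ with a handle attached by an orientation-reversing gluing. So $\gamma_N,\beta_{N+1}$ can still be isotoped into $\Sigma_g^2$, after possibly pushing them across $A$ along an orientation-preserving band. Concretely, I would take $\beta_{N+1}$ to be a curve parallel to one boundary component of $\Sigma_g^2$ composed with a band. Failing that, I would use the $(1\,3),(2\,4)$ stabilization points as additional room.

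The second step is the computation in the hyperelliptic model. Order the $(1\,2)$-branch points $b_1,\dots,b_{2g+2}$ (resp. $b_1,\dots,b_{2g+1}$) along the line in Figure \ref{fig:diagram1}. Let $c_j$ be the lift of a small loop enclosing exactly $b_j$ and $b_{j+1}$, with labels in sheets $1,2$, as the purple curve of Example \ref{eg:diagram1}. Each $c_j$ is a single simple closed curve in $\Sigma_g$, and $c_j,c_{j+1}$ meet once. The chain $c_1,\dots,c_{2g}$ (or $c_{2g+1}$) is the standard Humphries chain. By the change-of-coordinates principle, we can match Chillingworth's chain to these $c_j$'s, or to lifts of loops enclosing an even number of consecutive $b_j$'s (for curves like $\gamma_N$ that separate off more). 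Each such loop bounds a disk in $\mathcal D_h$ containing at least two $(1\,2)$-points. The disk avoids the stabilization points and the boundary circle, because all cuts of $p_1$, $q_1$ are interior and we choose the loops away from the blue points, as in Remark \ref{rmk:stab_curves}. Hence $h(\delta)$ is embedded: its lift occupies sheets $1,2$ and double covers nothing else.

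The main obstacle will be the even-genus curves $\gamma_N$ and $\beta_{N+1}$. Their intersection pattern with the crosscap region must be shown to be realizable by curves supported in $\Sigma_g^2$ with the required chain intersections. This is essentially a careful isotopy on Figure \ref{fig:even_p}. I would first try drawing $\beta_{N+1}$ as the lift of a loop around $b_{2g+1},b_{2g+2}$. I would then check, using the gluing data of Example \ref{eg:diagram3}(1), that the resulting genus-$(2g+2)$ picture agrees with Chillingworth's.
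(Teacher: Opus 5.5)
Your strategy is the one the paper intends; the paper omits the proof as ``an exercise in computing the images''. You place the listed curves in the orientable piece $h^{-1}(\mathcal D_h)$. You then realize each one as a component of the lift, under the stabilized hyperelliptic cover, of a round loop around consecutive $(1\,2)$-points, chosen away from the stabilization points. The odd case is essentially complete, with one correction: Lickorish's configuration is a comb, not a chain. The chain $\alpha_1,\beta_1,\gamma_1,\dots,\gamma_{N-1},\beta_N$ goes to $c_1,\dots,c_{2N}$. It is the $\alpha_i$ with $i\ge 2$ (which meet only $\beta_i$), not $\gamma_N$, that correspond to loops around $b_1,\dots,b_{2i}$. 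Since the complement of $\Sigma^1_N$ is a Möbius band in both models, the change of coordinates is legitimate there.

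The gap is the even case, which you leave open; the fixes you sketch do not work.
\begin{enumerate}
\item ``Since $A$ is an annulus, $\gamma_N,\beta_{N+1}$ can still be isotoped into $\Sigma_g^2$'' does not follow. A curve isotopes into $\Sigma_g^2$ iff it isotopes off the core of $A$, which represents $\PD(w_1(F))$. One-sided curves therefore never can, and for two-sided ones this is exactly what must be proved.
\item Theorem~\ref{thm:chillingworth_69}(i) only has indices $i\le N$, so $\beta_{N+1}$ need not be treated. Your model for it, the lift of the loop around $b_{2N+1},b_{2N+2}$, is exactly where $\gamma_N$ has to go. Indeed $\alpha_1,\beta_1,\gamma_1,\dots,\beta_N,\gamma_N$ is already a chain of $2N+1$ two-sided curves, and this length is maximal in $N_{2N+2}$. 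A chain of length $2N+2$ would have neighbourhood $\Sigma^1_{N+1}$. Its complement would then have Euler characteristic $1$ and one boundary circle, i.e.\ be a disk, making $F$ orientable.
\end{enumerate}

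What is needed instead:
\begin{itemize}
\item Send this chain to $c_1,\dots,c_{2N+1}$, so that $\gamma_N=c_{2N+1}$, and send each $\alpha_i$ with $2\le i\le N$ to a lift of the loop around $b_1,\dots,b_{2i}$.
\item A regular neighbourhood of $c_1\cup\dots\cup c_{2N+1}$ is isotopic to all of $p_1^{-1}(\mathcal D_p)\cong\Sigma^2_N$, whose complement in $F$ is $A$. So the identification you want exists iff the complement of a regular neighbourhood of Chillingworth's configuration is connected, hence an annulus, and not, e.g., two Möbius bands.
\item You must verify this from the position of $\gamma_N$. Equivalently, show that the boundary curves of that neighbourhood, which are mod $2$ homologous to $\alpha_1+\gamma_1+\dots+\gamma_N$, are non-separating. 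The paper's remark in Section~\ref{sec:k_even} that every listed curve lies in the interior of $\Sigma_g^2$ is precisely this assertion.
\end{itemize}
Once this is checked, the rest of your argument goes through.
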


We omit the proof of Proposition \ref{prop:chill_DT_images}, which is an exercise in computing the images of the curves in Figure \ref{fig:chill_generators} under $p$ and $q$. 

\begin{figure}[ht]
        \centering
        \begin{tikzpicture}
        \node[anchor=south west, inner sep=0] (fig) at (0,0) {\includegraphics[width=5in]{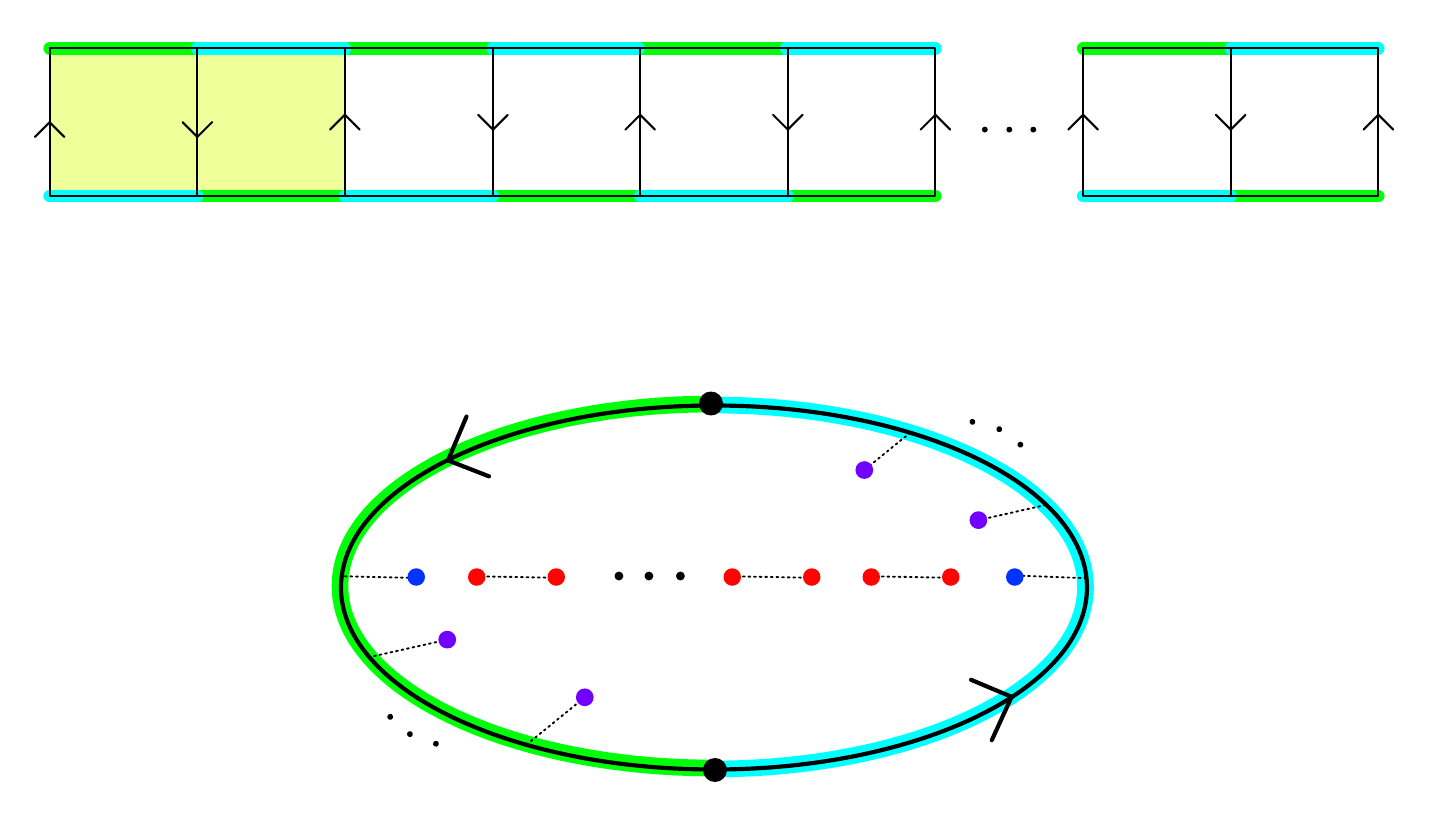}};
        \draw[decorate, decoration={brace, amplitude=10pt, mirror}] (.5,5.45) -- (3,5.45) node[midway, scale=.75, yshift=-20pt] {$1$};
        \draw[decorate, decoration={brace, amplitude=10pt, mirror}] (3.1,5.45) -- (5.6,5.45) node[midway, scale=.75, yshift=-20pt] {$3$};
        \draw[decorate, decoration={brace, amplitude=10pt, mirror}] (5.7,5.45) -- (8.3,5.45) node[midway, scale=.75, yshift=-20pt] {$5$};
        \draw[decorate, decoration={brace, amplitude=10pt, mirror}] (9.6,5.45) -- (12.3,5.45) node[midway, scale=.75, yshift=-20pt] {$2k-1$};

        \draw[decorate, decoration={brace, amplitude=10pt}] (.5,7.05) -- (3,7.05) node[midway, scale=.75, yshift=20pt] {$2$};
        \draw[decorate, decoration={brace, amplitude=10pt}] (3.1,7.05) -- (5.6,7.05) node[midway, scale=.75, yshift=20pt] {$4$};
        \draw[decorate, decoration={brace, amplitude=10pt}] (5.7,7.05) -- (8.3,7.05) node[midway, scale=.75, yshift=20pt] {$6$};
        \draw[decorate, decoration={brace, amplitude=10pt}] (9.6,7.05) -- (12.3,7.05) node[midway, scale=.75, yshift=20pt] {$2k$};

        \node[scale=.75] at (2.6,2.2) {$(1\,3)$};
        \node[scale=.75] at (10.1,2.2) {$(2\,4)$};
        \node[scale=.75] at (2.8,1.3) {$(1\,5)$};
        \node[scale=.75] at (4.5,.4) {$(1\, 2k-1)$};
        \node[scale=.75] at (9.7,3) {$(2\,6)$};
        \node[scale=.75] at (8.2,3.8) {$(2\, 2k)$};


        \end{tikzpicture}
        
        \caption{The yellow rectangle shows that the intersections of sheets $1$ and $2$ with $\partial(\Sigma_g^2)$ remain antipodal to each other after each pair of stabilizations, where sheets $1$ and $2$ are the only sheets that are not stabilizing disks.}
        \label{fig:even_stabilize}
    \end{figure}

\begin{remark} \label{rmk:stabilizations} We provide explicit stabilizations for the covers constructed in Example \ref{eg:diagram3}, along with their corresponding projection diagrams. Taking Theorem \ref{thm:gabai_kazez} into consideration, this collection of stabilized maps exhausts all equivalence classes of simple branched covers from closed, non-orientable surfaces to $\RP^2$.\begin{enumerate}
    \item Consider $p\colon N_{2g+2}\rightarrow \RP^2$ from Example \ref{eg:diagram3}(1). In the construction of $p$, we applied two stabilizations to $p_1$: one with monodromy $(1\,3)$ and the other with monodromy $(2\,4)$. We may, if we wish, stabilize $p_1$ by adding two more branch points with monodromy $(1\,5)$ and $(2\, 6)$. We then obtain a $6$-fold cover $N_{2g+2}\rightarrow \RP^2$ by gluing $p_1$ to the $6$-fold cover $A\rightarrow Mb$. Iterating on this process, we see that we can always apply two stabilizations to $p_1$, and extend it to a cover $N_{2g+2}\rightarrow \RP^2$. In this way, we obtain a $2k$-fold branched cover $N_{2g+2}\rightarrow \RP^2$ for all $k\geq 1$. Figure \ref{fig:even_stabilize} shows the base diagram $\mathcal D_p$ stabilized in this way. The total monodromy around $\partial \mathcal D_p$ is $(1\,3\,5\,\cdots 2k-1)(2\,4\,\cdots 2k)$. After extending $p_1$ to a cover $p\colon N_{2g+2}\rightarrow \RP^2$, this is the monodromy associated to $\ell^2$, where $\ell=[\RP^1]\in\pi_1(\RP^2\setminus B_p)$; equivalently $\omega_p(\ell)=(1\,2\,3\,\cdots\,2k)$.

    The top of Figure \ref{fig:even_stabilize} shows how the boundaries on each of the $2k$ sheets will be glued along $\partial A$. The stabilizations have been arranged so that with each pair of stabilizations, sheets $1$ and $2$ continue to be antipodal to each other. Therefore, whenever a curve on our labeled diagram with label $1$ (resp. $2$) hits the boundary of $\mathcal D_p$, it will re-enter $\mathcal D_p$ on the antipodal side with label $2$ (resp. 1). Given a simple loop $\gamma\subset N_{2g+2}$, we will always assume that these stabilizations and cuts are performed away from its image $p(\gamma)\subset \RP^2$ (cf. Remark \ref{rmk:stab_curves}).

    \item Consider $q\colon N_{2g+1}\rightarrow \RP^2$ from Example \ref{eg:diagram3}(2). Just like the previous case, we establish a stabilization scheme for $q_1\colon \Sigma_g^1 \rightarrow \mathcal D_q$ so that it continues to extend over a cover $Mb\rightarrow Mb$. The following system does this: add two branch points with cuts antipodal to each other on $\mathcal D_q$, one with monodromy $(1\,4)$ and the other with monodromy $(1\,5)$. Working your way counterclockwise, add additional pairs of stabilizing branch points and cuts in this fashion. See the bottom of Figure \ref{fig:odd_stabilize} for a precise image of the stabilized labeled diagram. The total monodromy of $\partial \mathcal D_q$ after a finite number of stabilizations in this fashion will be a cycle of maximal length.

Now, this cover may be glued to the $(2k+1)$-fold cover $Mb\rightarrow Mb$, giving in total a $(2k+1)$-fold cover $N_{2g+1}\rightarrow \RP^2$. The top of Figure \ref{fig:odd_stabilize} shows how the boundaries on each of the $2k+1$ sheets will be glued along $\partial Mb$. The stabilizations have been arranged in such a way that the blue-highlighted edge of sheet 2 remains antipodal to the green-highlighted edge of sheet 1. Therefore, if any oriented arc on our labeled diagram has label 2 and intersects $\partial \mathcal D_q$ along the blue highlighted edge, it will re-enter $\mathcal D_q$ antipodally with a $1$ label. Conversely, if the arc has a $1$ label when it intersects the green edge of $\partial \mathcal D_q$, it will re-enter antipodally with a $2$ label. Given a simple loop $\gamma\subset F$, we will always assume that these stabilizations are performed away from its image $q(\gamma)\subset \RP^2$ (cf. Remark \ref{rmk:stab_curves}).    
\end{enumerate}
\end{remark}

\begin{figure}[ht]
        \centering

        \begin{tikzpicture}
            \node[anchor=south west, inner sep=0] (fig) at (0,0) {\includegraphics[width=6in]{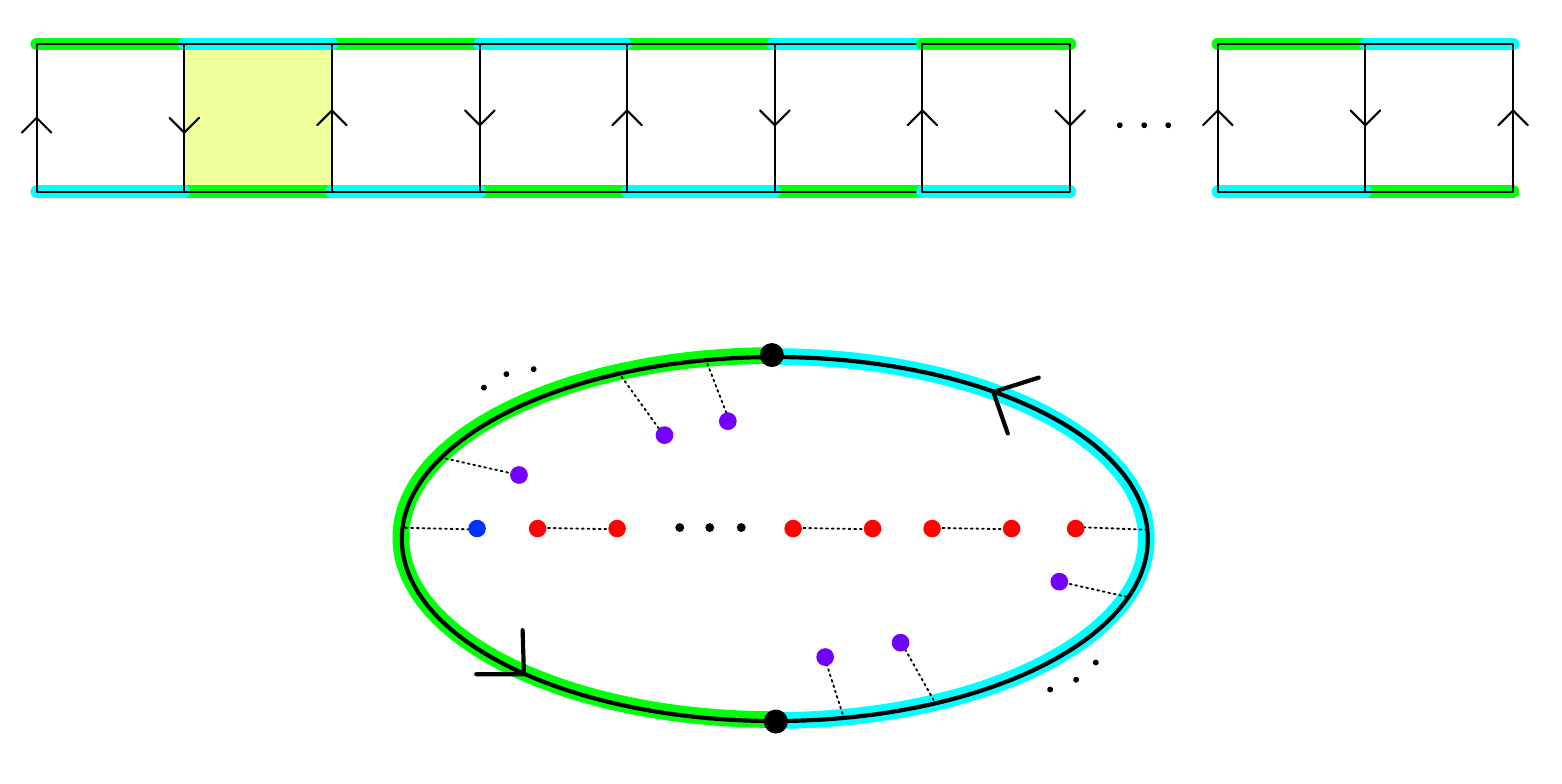}};
    
            \draw[decorate, decoration={brace, amplitude=10pt, aspect=0,mirror}] (.3,5.65) node[below, scale=.75, yshift=-13pt] {$3$}  -- (1.75,5.65) ;
            \draw[decorate, decoration={brace, amplitude=10pt, mirror}] (1.85,5.65) -- (4.6,5.65) node[midway, scale=.75, yshift=-20pt] {$1$};
            \draw[decorate, decoration={brace, amplitude=10pt, mirror}] (4.7,5.65) -- (7.5,5.65) node[midway, scale=.75, yshift=-20pt] {$4$};
            \draw[decorate, decoration={brace, amplitude=10pt, mirror}] (7.6,5.65) -- (10.4,5.65) node[midway, scale=.75, yshift=-20pt] {$6$};
            \draw[decorate, decoration={brace, amplitude=10pt, mirror}] (11.9,5.65) -- (14.7,5.65) node[midway, scale=.75, yshift=-20pt] {$2k$};
    
            \draw[decorate, decoration={brace, amplitude=10pt}] (.4,7.35) -- (3.2,7.35) node[midway, scale=.75, yshift=20pt] {$2$};
            \draw[decorate, decoration={brace, amplitude=10pt}] (3.3,7.35) -- (6.1,7.35) node[midway, scale=.75, yshift=20pt] {$5$};
            \draw[decorate, decoration={brace, amplitude=10pt}] (6.2,7.35) -- (9,7.35) node[midway, scale=.75, yshift=20pt] {$7$};
            \draw[decorate, decoration={brace, amplitude=10pt, aspect=1}] (9.1,7.35) -- (10.4,7.35) node[above, scale=.75, yshift=13pt] {$9$};
            \draw[decorate, decoration={brace, amplitude=10pt, aspect = 0}] (11.9,7.35) node[above, scale=.75, yshift=13pt] {$2k+1$} -- (13.3,7.35);
            \draw[decorate, decoration={brace, amplitude=10pt, aspect=1}] (13.4,7.35) -- (14.8,7.35) node[above, scale=.75, yshift=13pt] {$3$};
    
            \node[scale=.75] at (3.5,2.5) {$(1\,3)$};
            \node[scale=.75] at (8.3,.3) {$(1\,4)$};
            \node[scale=.75] at (9.3,.45) {$(1\,6)$};
            \node[scale=.75] at (11.5,1.7) {$(1\,2k)$};
            \node[scale=.75] at (6.8,4.45) {$(1\, 5)$};
            \node[scale=.75] at (6,4.3) {$(1\, 7)$};
            \node[scale=.75] at (3.4,3.2) {$(1\,\,\, 2k+1)$};

            \end{tikzpicture}
        \caption{The yellow-shaded square shows that half of the boundary for sheet 1 is antipodal to half the boundary of sheet 2, where sheets $1$ and $2$ are the only sheets that are not stabilizing disks.}
        \label{fig:odd_stabilize}
    \end{figure}
 
\section{Lifting homomorphism}\label{sec:5}
 
\begin{definition}[Braid group] Let $F$ be a surface and $p_1, \dots, p_n \subset \text{Int } F$ be pairwise distinct points. A \textsl{braid with $n$ strands over $F$} is an $n$-tuple $\Gamma=(\gamma_1, \dots, \gamma_n)$ satisfying the following conditions:
\begin{itemize}
    \item[-] $\gamma_i \colon [0,1]\rightarrow F$ is a continuous map such that $\gamma_i(0)=p_i$ for every $i=1, \dots, n$;
    \item[-] $\gamma_i(1)\in \{p_1, \dots, p_n\}$ for every $i=1, \dots, n$;
\item[-] $\{\gamma_1(t), \dots, \gamma_n(t)\}$ are $n$ distinct points in $F$ for every $t \in [0,1]$.
\end{itemize}

The \textsl{braid group of $n$ strands on $F$} is obtained from the set of homotopy classes rel endpoints of braids of $n$ strands over $F$, by endowing it with the group structure induced by the concatenation of $n$-tuples of paths. Equivalently, \[\B_n(F)=\pi_1\big(\Gamma_n(F),\{p_1,\dots,p_n\}\big),\] where $\Gamma_n(F)$ is the space of $n$-element subsets of $\mathrm{Int}\,F$.
\end{definition}

Given a closed surface $F$, the Birman exact sequence \cite{Birman} reads
    \[\pi_1\big(\mathrm{Diff}(F),\id\big) \longrightarrow \B_n(F)\longrightarrow \M(F , \{p_1, \dots, p_n\}) \longrightarrow \M(F) \longrightarrow 1,\]
relating the group of braids over $F$ with the mapping class group of $F$ with $n$ marked points. When $\chi(F)<0$ the group $\mathrm{Diff}(F)$ has contractible components and the left-hand map is trivial, so that $\B_n(F)$ injects into $\M(F,\{p_1,\dots,p_n\})$. This is \emph{not} the case for $F=\RP^2$: since $\mathrm{Diff}(\RP^2)\simeq SO(3)$ we have $\pi_1(\mathrm{Diff}(\RP^2))\cong \Z_2$, and $\M(\RP^2)$ is trivial, so the sequence becomes
    \[1 \longrightarrow \Z_2 \longrightarrow \B_n(\RP^2)\longrightarrow \M(\RP^2, \{p_1, \dots, p_n\}) \longrightarrow 1 .\]
Here the $\Z_2$ is generated by the full twist $\Delta_n$, which is the unique element of order two in $\B_n(\RP^2)$ \cite{vanBuskirk, GG2004}; thus
    \[\M(\mathbb{RP}^2, \{p_1, \dots, p_n\}) \;\cong\; \B_n(\mathbb{RP}^2)/\langle \Delta_n\rangle .\]

\begin{definition}[Half twist]
    Let $F$ be a surface with a fixed $n$-tuple of pairwise distinct points $\mathcal{P}=\{p_1, \dots, p_n\}$ and let $\alpha \subset F$ be  a simple arc meeting $\mathcal{P}$ only at its end points. Without loss of generality, assume that $\alpha \cap \mathcal{P}=\{p_1, p_2\}$. A \textsl{half twist} on $F$ around $\alpha$ is a braid $h_{\alpha}=(\gamma_1, \dots, \gamma_n)$ such that
\begin{itemize}
        \item[-] $\gamma_i$ is constantly equal to $p_i$ for $i \geq 3$;
        \item[-] $\gamma_1$ and $\gamma_2$ are supported in a disk neighborhood $D$ of $\alpha$ containing no other point of $\mathcal P$, and $h_\alpha$ is obtained by rotating $D$ by an angle $\pi$ about the midpoint of $\alpha$; in particular $\gamma_1(1)=p_2$ and $\gamma_2(1)=p_1$.
    \end{itemize}
    
    \begin{figure}
    \centering
     \begin{overpic}[width=0.75\linewidth]{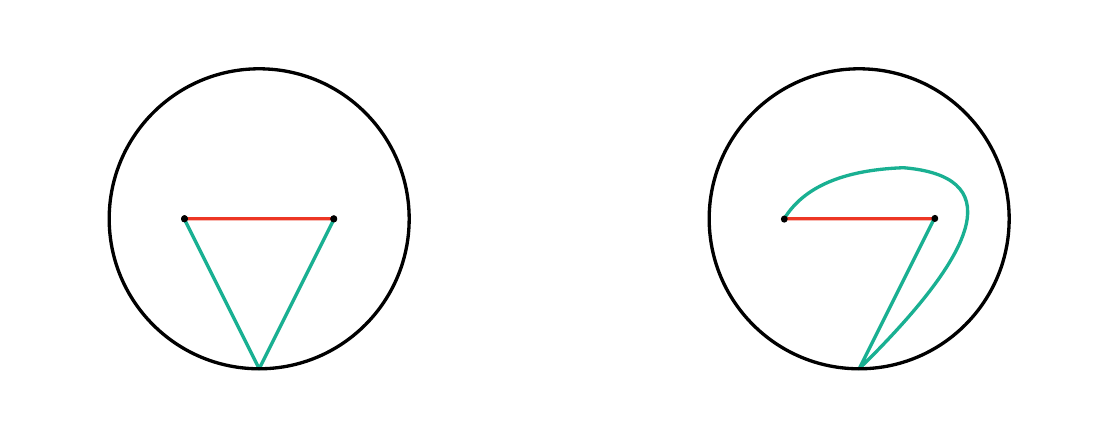}
        \put(22,17){\textcolor{red}{\small{$\alpha$}}}
        \put(73,16){\textcolor{red}{\small{$h_{\alpha}(\alpha)$}}}
        \put(16,12){\textcolor{PineGreen}{\small{$\sigma_1$}}}
        \put(27.5,12){\textcolor{PineGreen}{\small{$\sigma_2$}}}
        \put(70,12){\textcolor{PineGreen}{\small{$h_{\alpha}(\sigma_1)$}}}
        \put(70,26){\textcolor{PineGreen}{\small{$h_{\alpha}(\sigma_2)$}}}
    \end{overpic}
    \caption{The action of the half twist $h_{\alpha}$ on the simple arcs $\alpha$, $\sigma_1$ and $\sigma_2$.} \label{fig:halftwist}

\end{figure}
\end{definition}

 Figure \ref{fig:halftwist} shows a local model of the half twist $h_\alpha$ in a neighborhood of $D$. We are now ready to introduce the notion of lifting homomorphism. 
\begin{definition}\label{def:lifthom}
    Let $p\colon F_1 \rightarrow F_2$ be a simple $d$-fold branched cover between surfaces. Let $b$ be a self-diffeomorphism of $F_2$ fixing the branch set $B_p$ set-wise, where $n=\#B_p\geq 1$. Then $b$ is \textsl{liftable with respect to $p$} if and only if there exists a self-diffeomorphism $\widetilde b$ of $F_1$ making the following diagram commute
    \begin{center}
\begin{tikzcd}
F_1 \arrow[d, "p"'] \arrow[r, "\widetilde b"] & F_1 \arrow[d, "p"] \\
F_2 \arrow[r, "b"']    & F_2.   
\end{tikzcd}
    \end{center} 
    If such $\widetilde b$ exists, it is unique up to composition with deck transformations of $p$. In particular, it is unique whenever the deck group of $p$ is trivial, which is the case for a simple branched cover of degree $d\geq 3$ with connected total space and at least one branch point. For $d=2$ the deck group is $\Z_2$, and in this case the lifting should be understood as being defined only up to the hyperelliptic involution. We say that a braid in $\B_n(F_2)$ is liftable with respect to $p$ if it can be represented by a self-diffeomorphism of $(F_2,B_p)$ which is liftable with respect to $p$.
    
    Denote by $\L_p$ the subgroup of liftable braids with respect to $p$. We can hence define the lifting homomorphism
    \[\lambda_p \colon \L_p \longrightarrow \M(F_1)\] by sending each braid to the isotopy class of its unique lift.
\end{definition}

We now state a general criterion for detecting the liftability of a self-diffeomorphism of $F_2$. The following facts are known to experts in this subject. We include a proof sketch for convenience.

\begin{proposition}[Lifting criterion]\label{prop:liftcrit}
    Let $p\colon F_1 \rightarrow F_2$ be a simple branched cover between surfaces whose branch set consists of $n$ points. A base point preserving self-diffeomorphism $b$ of $(F_2, B_p)$ is liftable with respect to $p$ if and only if 
    \[\omega_p \circ b_* = \sigma \, \omega_p \, \sigma^{-1},\] where $\sigma \in S_d$ and \[b_*\colon \pi_1(F_2 \setminus B_p)\longrightarrow \pi_1(F_2 \setminus B_p)\] is the map induced by $b$ on $\pi_1(F_2 \setminus B_p)$. 
    
    In particular, a half twist $h_\alpha$ around a simple arc $\alpha \subset F_2$ joining two distinct points of $B_p$ is liftable precisely when $p\inv(\alpha)$ contains a simple closed loop $\gamma$. In this case, the lift of $h_{\alpha}$ is the Dehn twist $t_\gamma$, up to the choice of the direction of the twist, which is not canonical when $F_1$ is non-orientable.
\end{proposition}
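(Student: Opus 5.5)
The first part is a classical covering-space argument. I will handle it by passing to the unbranched cover over $F_2\setminus B_p$ and then extending over the branch points. For the second part I will use the local picture of a simple cover near an arc joining two branch points.

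For the criterion, I restrict to $p^\circ\colon F_1\setminus p^{-1}(B_p)\to F_2\setminus B_p$. This is an ordinary $d$-fold cover, determined up to equivalence by the conjugacy class of $\omega_p$.

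Suppose first that $\widetilde b$ exists with $p\circ\widetilde b=b\circ p$. Then $\widetilde b$ restricts to an equivalence between $p^\circ$ and the pullback cover $b^*p^\circ$. The monodromy of $b^*p^\circ$, based at the fixed base point, is $\omega_p\circ b_*$. It is defined using the numbering of the fibre over the base point transported by $\widetilde b$. Changing the numbering changes the monodromy by a conjugation $\sigma$, which gives $\omega_p\circ b_*=\sigma\omega_p\sigma^{-1}$.

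Conversely, suppose the relation holds. The covers $p^\circ$ and $b^*p^\circ$ have conjugate monodromies, so by the classification of covering spaces there is a fibre-preserving homeomorphism $\widetilde b^\circ$ covering $b|_{F_2\setminus B_p}$. It sends the fibre point numbered $i$ to the one numbered $\sigma(i)$; this bijection on fibres comes from lifting paths via $b$. Near each branch point the covers are the local models $z\mapsto z^k$ with $k\in\{1,2\}$ (simple cover). A homeomorphism of punctured disks covering a homeomorphism of the base extends continuously over the punctures, because it is a lift of a map of annuli compatible with the models. This gives $\widetilde b$ on all of $F_1$, and smoothness is arranged by isotoping $b$ near $B_p$ to be standard, e.g.\ a rotation, so the lift is the corresponding rotation in the $z^k$ chart. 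This extension step is the only delicate point, and it is standard.

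For the half twist, let $\alpha$ join $b_1,b_2\in B_p$, and let $D$ be a disk neighbourhood of $\alpha$. The meridians of $b_1,b_2$ inside $D$ map to transpositions $\tau_1,\tau_2$, where each is read along a path from the base point to $D$. The half twist acts on $\pi_1(F_2\setminus B_p)$ only through $D$. In Artin form it sends $\mu_1\mapsto\mu_1\mu_2\mu_1^{-1}$ and $\mu_2\mapsto\mu_1$, and it fixes every other generator, including $\ell$ when $F_2=\RP^2$, as long as those generators are chosen to avoid $D$. Taking $\sigma=\id$, the criterion becomes $\tau_1\tau_2\tau_1^{-1}=\tau_2$, i.e.\ $\tau_1$ and $\tau_2$ commute. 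If they are disjoint, the monodromy around $\partial D$ is $\tau_1\tau_2$, which is nontrivial and not fixed in the needed way; one checks $h_\alpha$ is not liftable. It remains to rule out a nontrivial $\sigma$. The relation must hold on loops outside $D$ as well as on $\mu_1$, and since these loops together with $\mu_1,\mu_2$ generate a transitive subgroup, $\sigma$ centralizes it, forcing $\sigma=\id$ when $d\geq3$ (for $d=2$ everything commutes). So $h_\alpha$ is liftable iff $\tau_1=\tau_2=(i\,j)$.

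This is exactly the condition that $p^{-1}(\alpha)$ contains a closed component. Over $D$, sheets $i$ and $j$ form an annulus double-covering $D$ branched at $b_1,b_2$, and the preimage of $\alpha$ in that annulus is its core $\gamma$. If instead $\tau_1\neq\tau_2$, every component of $p^{-1}(\alpha)$ is an arc or a tree and contains no circle. On this annulus the lift of the $\pi$-rotation of $D$ is a $2\pi$ twist, fixed on the boundary: the standard hyperelliptic picture of Example \ref{eg:diagram1} with $g=0$. This twist is $t_\gamma$, and on the remaining $d-2$ sheets the lift is the identity rel boundary. When $F_1$ is non-orientable the annulus still has two orientations, so the sign of the twist is not canonical.
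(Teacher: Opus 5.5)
Your proof of the criterion itself, and of the implication that $\tau_1=\tau_2=(i\,j)$ makes $h_\alpha$ lift to $t_\gamma^{\pm1}$, is fine and follows the paper's route. Your care about extending over the branch points and adjusting $b$ near $B_p$ for smoothness goes beyond the paper's sketch. The gap is in the converse, at the step that rules out $\sigma\neq\mathrm{id}$.

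Because $h_\alpha$ is the identity off $D$, the criterion only says that $\sigma$ commutes with $\omega_p$ of loops avoiding $D$. On $\mu_1,\mu_2$ it says $\sigma\tau_1\sigma^{-1}=\tau_1\tau_2\tau_1^{-1}$ and $\sigma\tau_2\sigma^{-1}=\tau_1$. So concluding that $\sigma$ centralizes $\tau_1,\tau_2$ presupposes $\tau_1=\tau_2$, which is circular. Even a genuine centralizer of a transitive group need not be trivial: it is only semiregular, and a $d$-cycle is centralized by its own powers. For $F_2=\RP^2$ the loop $\ell$ avoids $D$, so powers of $\omega_p(\ell)$ are natural candidates for $\sigma$. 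Separately, with $\sigma=\mathrm{id}$ the image of $\mu_2$ alone already gives $\tau_1=\tau_2$. Your equation $\tau_1\tau_2\tau_1^{-1}=\tau_2$ is a slip, and the ``one checks'' for disjoint transpositions is not an argument.

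In fact this step cannot be repaired in the stated generality. Take $F_2=\RP^2$, $d=3$, and two branch points with $\tau_1=(1\,2)$, $\tau_2=(2\,3)$.
The relation $\omega_p(\ell)^2=\tau_1\tau_2$ forces $\omega_p(\ell)$ to be a $3$-cycle. The monodromy is transitive, and Riemann--Hurwitz gives $\chi(F_1)=1$, so $F_1=\RP^2$.
The conditions on $\mu_1,\mu_2$, with your Artin formulas, are met by the $3$-cycle $\sigma$ that conjugates $(1\,2)\mapsto(1\,3)$ and $(2\,3)\mapsto(1\,2)$. Since $\sigma\in A_3=\langle\omega_p(\ell)\rangle$, it also commutes with $\omega_p(\ell)$.
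So $h_\alpha$ is liftable by the first part, yet $p^{-1}(\alpha)$ is a single arc.

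Similarly, $d=4$ with $\tau_1=(1\,2)$, $\tau_2=(3\,4)$ and $\omega_p(\ell)=\sigma=(1\,3\,2\,4)$ has total space $S^2$ and gives a liftable half twist with disjoint transpositions. This directly contradicts your disjoint-case claim.

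So the ``only if'' direction needs an extra hypothesis, for instance that the lift be the identity outside $p^{-1}(D)$, i.e.\ $\sigma=\mathrm{id}$. That is exactly what the paper's sketch assumes tacitly when it writes $\omega_p\circ h_{\alpha*}=\omega_p$. The direction the paper actually uses later (Remark~\ref{rmk:ht_dt}) is the one you prove correctly.
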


\begin{proof}[Idea of proof]
    To show the first part of the statement, we notice that the map $b \colon (F_2, B_p) \rightarrow (F_2, B_p)$ is determined by its restriction $b|:= b|_{F_2 \setminus B_p}$ and $b$ is liftable if and only if $b|$ is liftable with respect to the (un-branched ordinary) cover $p|:= p|_{F_1\setminus p\inv(B_p)}$. From the theory of covering spaces, we know that $b|$ is liftable if and only if $b|_*\circ p|_*$ is conjugate to $p|_*$, where $p|_*\colon \pi_1(F_1\setminus p\inv(B_p))\rightarrow \pi_1(F_2 \setminus B_p)$ is the induced map on $\pi_1$. The image of $p|_*$ is canonically isomorphic to the subgroup of $\pi_1(F_2 \setminus B_p)$ determined by the monodromy representation $\omega_p$. So the above condition is equivalent to asking that $\omega_p\circ b_*$ is conjugate to $\omega_p$.

    Now we discuss the second part of the statement, that a half twist $h_{\alpha}$ is liftable precisely when $p\inv(\alpha)$ contains a simple closed loop $\gamma$. The idea here is that if $\alpha\subset F_2$ is an arc between two points $a,b\in B_p$, the half twist $h_\alpha$ interchanges the meridians $\mu_a$ and $\mu_b$ of $a$ and $b$ respectively, up to conjugation. Since $p$ is simple, $\omega_p(\mu_a)$ and $\omega_p(\mu_b)$ are both transpositions. Since $\omega_p\circ {h_{\alpha}}_* = \omega_p$, $\omega_p(\mu_a)=\omega_p(\mu_b)=(i\,j)$ for some $i,j\in \{1,2,\cdots , d\}$. So basically, the sheets $i$ and $j$ of the cover $p|$ are pinched together at two different points, one in $p\inv(a)$ and the other in $p\inv(b)$. The curve $\gamma$ is formed by gluing together two arcs between these two pinch points: one along the $i$-sheet, the other along the $j$-sheet.
\end{proof}

\begin{remark}\label{rmk:ht_dt}
    One consequence of the above proof is the following: let $\gamma\subset F_1$ be a simple loop. Suppose that $p|_\gamma$ is injective and that $p(\gamma)$ bounds an embedded disk $D\subset F_2$ which contains exactly two branch points, whose monodromies are then necessarily the same transposition $(i\,j)$. Then $p^{-1}(D)$ is the union of $d-2$ disks and one annulus with core $\gamma$, and $t_\gamma=\lambda_p(h_\alpha)^{\pm 1}$, where $\alpha$ is an arc in $D$ between those branch points such that $p(\gamma)$ bounds a neighborhood of $\alpha$.
\end{remark}

\begin{remark}\label{rmk:fulltwist}
    The failure of injectivity for the map $\B_n(\RP^2) \rightarrow \M(\RP^2,\{p_1, \dots, p_n\})$ given by the Birman exact sequence is harmless for our purposes. Indeed, $\Delta_n$ acts trivially on $\pi_1(\RP^2\setminus B_p)$, so by Proposition \ref{prop:liftcrit} it is liftable for every $p$ and $\lambda_p(\Delta_n)=1$. Hence $\lambda_p$ factors through $\M(\RP^2,B_p)$, and the image of $\lambda_p$ --- which is all we are concerned with --- is unaffected. Nevertheless, statements such as ``$b$ is a braid'' should be read as ``$b$ is the mapping class of $\RP^2$ with marked points $B_p$ determined by a braid''.
\end{remark}

We relate the lifting homomorphisms of equivalent branched covers in the following lemma, the proof of which follows immediately from Theorem \ref{thm:gabai_kazez}.
\begin{lemma}\label{lem:lifting}
    Let $F$ be a closed connected surface and $p,q \colon F \rightarrow \mathbb{RP}^2$ be two simple branched covers of the same degree. Then $p$ is equivalent to $q$ via a commutative diagram
    \begin{center}
\begin{tikzcd}
F \arrow[d, "p"'] \arrow[r, "\Phi"] & F \arrow[d, "q"] \\
\mathbb{RP}^2 \arrow[r, "\phi"']    & \mathbb{RP}^2,   
\end{tikzcd}
    \end{center}
    where $\phi$ and $\Phi$ are diffeomorphisms and
    \[\lambda_q(\phi \circ b\circ \phi^{-1})=\Phi \circ \lambda_p(b)\circ \Phi^{-1}\]
    for any liftable braid $b \in \L_p$. In particular, $\lambda_p$ surjects onto the Dehn twist subgroup $\T(F)$ if and only if $\lambda_q$ does. The same statement holds with $\T(F)$ replaced by $\M(F)$, or by any subset of $\M(F)$ invariant under conjugation.
\end{lemma}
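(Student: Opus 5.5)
We obtain $\phi$ and $\Phi$ directly from Theorem \ref{thm:gabai_kazez}, and then check that conjugation by $\phi$ carries liftable braids for $p$ to liftable braids for $q$, with lifts conjugated by $\Phi$.

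First, since $p$ and $q$ have the same degree, Theorem \ref{thm:gabai_kazez} provides diffeomorphisms $\Phi\colon F\to F$ and $\phi\colon \RP^2\to\RP^2$ with $q\circ\Phi=\phi\circ p$. This commutativity forces $\phi(B_p)=B_q$, since branch points are exactly the points over which the fiber has fewer than $d$ points. Consequently $\phi$ induces an isomorphism $\M(\RP^2,B_p)\to\M(\RP^2,B_q)$, $b\mapsto \phi\circ b\circ\phi^{-1}$. By Remark \ref{rmk:fulltwist}, working with mapping classes of $(\RP^2,B_p)$ instead of braids loses nothing, because the full twist lifts trivially for both covers.

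Next, let $b$ be a diffeomorphism of $(\RP^2,B_p)$ representing an element of $\L_p$, and let $\widetilde b$ be a lift, so that $p\circ\widetilde b=b\circ p$. Set $b'=\phi b\phi^{-1}$ and $\widetilde b'=\Phi\widetilde b\Phi^{-1}$. Then
\[q\circ\widetilde b'=q\Phi\widetilde b\Phi^{-1}=\phi p\widetilde b\Phi^{-1}=\phi b p\Phi^{-1}=\phi b\phi^{-1}q=b'\circ q,\]
so $b'$ is liftable with respect to $q$ and $\widetilde b'$ is a lift. Because the lift is unique when $d\ge 3$ (Definition \ref{def:lifthom}), we get $\lambda_q(\phi b\phi^{-1})=\Phi\,\lambda_p(b)\,\Phi^{-1}$. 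The argument also shows that isotopies of $b$ rel $B_p$ transport to isotopies of $b'$ rel $B_q$, so the formula is well defined on classes. Running the same argument with $\phi^{-1}$ and $\Phi^{-1}$ gives the reverse inclusion, hence $\L_q=\phi\L_p\phi^{-1}$ and $\operatorname{Im}\lambda_q=[\Phi]\,\operatorname{Im}\lambda_p\,[\Phi]^{-1}$ in $\M(F)$.

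Finally, if $S\subset\M(F)$ is invariant under conjugation, then $S\subset\operatorname{Im}\lambda_p$ holds if and only if $S=[\Phi]S[\Phi]^{-1}\subset\operatorname{Im}\lambda_q$. This applies to $S=\M(F)$. It also applies to $S=\T(F)$, since a conjugate of a Dehn twist is again a Dehn twist, $\Phi t_\gamma\Phi^{-1}=t_{\Phi(\gamma)}^{\pm1}$, so $\T(F)$ is normal.

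The only delicate points are the following. If $\phi$ does not fix the basepoint, the monodromies are related only up to conjugation; this is irrelevant because the argument above works directly with lifts rather than monodromies. In degree $2$, the lift is unique only up to the deck involution, so in that case the formula holds modulo the deck group. The hypothesis that $F$ is non-orientable is needed to invoke Theorem \ref{thm:gabai_kazez}; this is automatic, since $w_1(F)=p^*w_1(\RP^2)$ and $p^*$ is injective on $H^1$ with $\mathbb{Z}_2$ coefficients for a branched cover of odd or even degree alike.
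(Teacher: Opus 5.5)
Your argument is the one the paper intends; its proof is only the remark that the lemma ``follows immediately from Theorem~\ref{thm:gabai_kazez}''. For non-orientable $F$ every step you write is correct: $\phi(B_p)=B_q$, the computation $q\circ\Phi\widetilde b\Phi^{-1}=\phi b\phi^{-1}\circ q$, the reverse inclusion using $(\Phi^{-1},\phi^{-1})$, and the passage to conjugation-invariant subsets.

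The step that fails is your closing claim that non-orientability of $F$ is automatic because $p^*$ is injective on $H^1(\,\cdot\,;\Z_2)$ ``for a branched cover of odd or even degree alike''. The transfer only gives $\tau\circ p^*=d\cdot\id$ with $\Z_2$ coefficients. So injectivity holds for odd $d$, but can fail for even $d$. Here is a concrete counterexample, as in Remark~\ref{rmk:or}. Take the double cover $\widetilde p\colon T^2\to S^2$ branched at four points, no two of them antipodal, and compose it with the universal cover $u\colon S^2\to\RP^2$. Then $u\circ\widetilde p\colon T^2\to\RP^2$ is a simple $4$-fold branched cover with four branch points. Its pullback $p^*=\widetilde p^*\circ u^*$ factors through $H^1(S^2;\Z_2)=0$, and the total space is orientable.

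Hence for orientable $F$ your proof does not produce the equivalence $(\Phi,\phi)$ at all. Such $F$ force $d$ to be even, but they do occur, and Theorem~\ref{thm:gabai_kazez} as quoted only concerns non-orientable total spaces. In that setting even the uniqueness of lifts you cite from Definition~\ref{def:lifthom} breaks down: in the example above, the hyperelliptic involution of $T^2$ is a non-trivial deck transformation of a degree-$4$ cover. You do not actually need uniqueness anyway. Since $\Phi\,\mathrm{Deck}(p)\,\Phi^{-1}=\mathrm{Deck}(q)$, your computation, run in both directions, shows that the full sets of lifts correspond under conjugation by $\Phi$.

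The clean fix is to state the lemma for non-orientable $F$. This is the only case used later (Corollary~\ref{cor:lift}, Theorem~\ref{thm:1}, Corollary~\ref{thm:lift}), and it is also the implicit scope of the paper's one-line proof. Covering orientable $F$ would need a separate classification input. For instance, writing $p=u\circ\widetilde p$ and invoking the classification of simple covers of $S^2$ is not enough by itself. An equivalence of the lifted covers of $S^2$ descends to $\RP^2$ only if it commutes with the antipodal map, which is not automatic.
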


Lastly, we introduce the notion of \textsl{liftable isotopies} of branched covers.

\begin{definition}
    Let $p\colon F_1\to F_2$ be a simple branched cover. A \textsl{liftable isotopy} is an ambient isotopy $\Phi\colon F_2 \times I \to F_2$ which fixes the branch set $B_p$ pointwise and lifts to an ambient isotopy $\widetilde\Phi\colon F_1 \times I\to F_1$ with respect to $p$, namely $p\circ \widetilde \Phi = \Phi \circ (p \times \id_I)$.
\end{definition}

In Section \ref{sec:lift_chill}, we study the image of a simple loop $\gamma\subset F_1$ under $p$. We will often need to isotope $p(\gamma)$ in order to argue that $t_\gamma$ lies in the image of $\lambda_p$. The isotopies we apply to $p(\gamma)$ correspond to actual isotopies of $\gamma$ precisely when they are liftable. More detailed information about liftable isotopies can be found in \cite{Zuddas2007RepresentingDT}, where the third author provides a complete set of moves for liftable isotopies of branched covers over $D^2$.

\section{Surjectivity of the lifting homomorphism}\label{sec:lift_chill}

\subsection{The lifting homomorphism surjects onto the twist subgroup}
The aim of this section is to prove the following.
 \begin{theorem}\label{thm:chil}
    Let $F$ be a closed connected non-orientable surface. For any integer $d \geq 5$ such that $d \equiv \chi(F) \pmod{2}$, there exists a simple $d$-fold branched cover $p \colon F \rightarrow \RP^2$ such that the image of the lifting homomorphism
    \[\lambda_p \colon \L_p \longrightarrow  \M(F) \]
    contains the twist subgroup $\T(F)$. 
\end{theorem}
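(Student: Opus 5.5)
We take for $p$ the stabilized covers of Remark~\ref{rmk:stabilizations}: the even-degree version of the cover from Example~\ref{eg:diagram3}(1) when $F=N_{2g+2}$, and the odd-degree version of the cover from Example~\ref{eg:diagram3}(2) when $F=N_{2g+1}$. The parity condition $d\equiv\chi(F)\pmod 2$ is exactly what allows this choice. Since $d\geq 5$, the diagram contains, besides the $(1\,2)$ branch points from the hyperelliptic part, several stabilizing branch points with labels $(1\,j)$ and $(2\,j)$.

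The first step handles the Chillingworth Dehn twists. By Proposition~\ref{prop:chill_DT_images}, each curve $\delta$ in Theorem~\ref{thm:chillingworth_69} projects, after isotopy, to an embedded loop bounding a disk in $\mathcal D_p$. That disk contains at least two $(1\,2)$ branch points, and in the hyperelliptic picture exactly two. Remark~\ref{rmk:ht_dt} then gives $t_\delta=\lambda_p(h_\alpha)^{\pm1}$ for an arc $\alpha$ joining those two points, so every $t_{\alpha_i},t_{\beta_i},t_{\gamma_i}$ lies in $\operatorname{im}\lambda_p$.

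The second step adds the missing generators. The Chillingworth twists do not generate $\T(F)$, and we need a generating set for $\T(F)$ itself. Since $\T(F)$ has index two and the $y$-homeomorphism $y$ lies outside it, $\T(F)$ is generated by the Chillingworth twists $t_c$ together with their conjugates $y t_c y^{-1}$ (and $y^2$, which is a product of twists). Each conjugate $y t_c y^{-1}=t_{y(c)}^{\pm1}$ is again a Dehn twist, now along the curve $y(c)$. Explicitly, these are the twists along the two-sided curves that pass through the crosscap region, the ones running once or twice through the Möbius-band part of $F$. Alternatively, we could invoke Stukow's or Omori's finite generating set for $\T(F)$, which adds one or two such twists to Chillingworth's list.

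For each of these extra curves $c$, we draw $p(c)$ on the projection diagram. In the base, the curve crosses $\partial\mathcal D_p$, travels across the Möbius band, and re-enters antipodally with its sheet label changed $1\leftrightarrow 2$, as described in Remark~\ref{rmk:stabilizations}. The goal is then to find a liftable isotopy, using the stabilizing branch points (this is where $d\geq5$ is used), after which $p(c)$ is an embedded loop bounding a disk that contains exactly two branch points with the same transposition. Remark~\ref{rmk:ht_dt} then applies again.

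If that direct approach fails, the fallback is to write $t_c=\lambda_p(\text{braid})$ more cleverly. One option is to show that some liftable braid $b$, for instance a braid pushing a branch point around $\RP^1$, lifts to a mapping class $f$ with $f(c')=c$ for a curve $c'$ already handled. Then $t_c=f\,t_{c'}f^{-1}$ lies in the image.

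Finally, since $\operatorname{im}\lambda_p$ is a subgroup containing a generating set of $\T(F)$, it contains $\T(F)$. The main obstacle is the second step: manipulating the projections of the crosscap-crossing curves across the Möbius band. I expect that for degree $3$ or $4$ no such isotopy exists, and that the extra sheets $j\geq 3$ and their stabilizing branch points are precisely what make it possible.
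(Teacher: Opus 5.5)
Your plan has the same architecture as the paper's proof: take the covers of Example~\ref{eg:diagram3} stabilized as in Remark~\ref{rmk:stabilizations}, choose a generating set of $\T(F)$, and apply Remark~\ref{rmk:ht_dt} to each generator. However, the step that carries the whole content of the theorem is only stated as a goal.

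In your second step you never fix the extra curves, never draw their projections, and never exhibit a liftable isotopy. Your Reidemeister--Schreier generating set would even require the curves $y(c)$ and $\partial K^\circ$, i.e.\ the support of Chillingworth's $y$-homeomorphism, which you do not have. Also, saying that $y^2$ is a product of twists does not place it in $\operatorname{im}\lambda_p$ unless that twist is handled too.

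The fallback is not an argument either. A push of a branch point across the M\"obius band conjugates its transposition by the monodromy of a core loop, so it is not automatically liftable by Proposition~\ref{prop:liftcrit}, and you do not identify its lift.

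This is exactly what the paper supplies. It takes Chillingworth's explicit generators of $\T(F)$: the listed twists plus twists along $\eta,\psi,\chi$ (even genus) or $\eta,\psi$ (odd genus). It draws their images, which leave $\mathcal D_p$ with label $1$ and re-enter antipodally with label $2$. It then reduces them by explicit liftable isotopies such as Lemma~\ref{lem:bigon}, with one extra pair of stabilizations needed for $\eta$. That is where the degrees $6$ and $5$ come from; they are computed, not merely expected.

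Your first step also rests on a false claim. Proposition~\ref{prop:chill_DT_images} gives \emph{at least} two $(1\,2)$ points, and in general there are more. In the double cover, suppose an injectively projected curve $\delta$ had image bounding a disk with exactly two branch points. Then $\delta$ and $\iota(\delta)$ would cobound the annulus over that disk, so $\iota(\delta)\simeq\delta$. For higher genus, Chillingworth's system contains curves that are not $\iota$-invariant up to isotopy. For example, in the standard hyperelliptic picture the meridians of the middle handles project to loops separating the branch points into two groups of at least four.

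So Remark~\ref{rmk:ht_dt} does not apply as it stands. You need Zuddas's liftable isotopies through the stabilizing points $(1\,3)$, $(2\,4)$ (resp.\ $(1\,3)$ for odd genus) to reduce to exactly two branch points, as the paper does. In particular, the extra sheets are already needed here, not only for the curves crossing the M\"obius band.
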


Let $F$ be a closed connected non-orientable surface of genus $g(F)=k$. In what follows, we identify a generating set for $\T(F)$ and show that each generator of $\T(F)$ is the lift of a half twist with respect to the branched covers $p$ and $q$ constructed in Example \ref{eg:diagram3}. The maps $p$ and $q$ themselves, along with their stabilizations (cf. Remark \ref{rmk:stabilizations}) will be used abundantly in what follows.

\subsection{Case 1: $g(F)=k$ is even}\label{sec:k_even}

Let $p\colon F\rightarrow \RP^2$ be the $4$-fold cover from Example \ref{eg:diagram3}(1). We show that, after a single pair of stabilizations, $\lambda_p$ surjects onto $\T(F)$. From \cite{Chillingworth_1969}, we know that $\T(F)$ is generated by Dehn twists along the following 2-sided simple loops:\begin{itemize}
    \item[-] the collection of loops given in Theorem \ref{thm:chillingworth_69} (i), shown in Figure \ref{fig:chill_generators}. We will re-label these curves and denote the entire collection by $\{\alpha_i\}$. Note that each $\alpha_i$ is contained in the interior of $\Sigma_g^2=F \setminus \sqcup_2 \Int D^2$;

    \item[-] the loops $\eta, \psi,$ and $\chi$, which we define below.
\end{itemize}

We may also regard $F$ as being obtained from $\Sigma_g^2=F \setminus \sqcup_2 \Int D^2$ by gluing the boundary components via a reflection of $S^1$. With this in mind, the curves $\eta$, $\psi$, and $\chi$ are shown in Figure \ref{fig:epc}. The image of each of these three curves under $p$ is shown in (1) of Figures \ref{fig:even_psi}, \ref{fig:even_chi}, and \ref{fig:even_eta}, respectively.

   \begin{figure}
    \centering
     \begin{overpic}[width=6in]{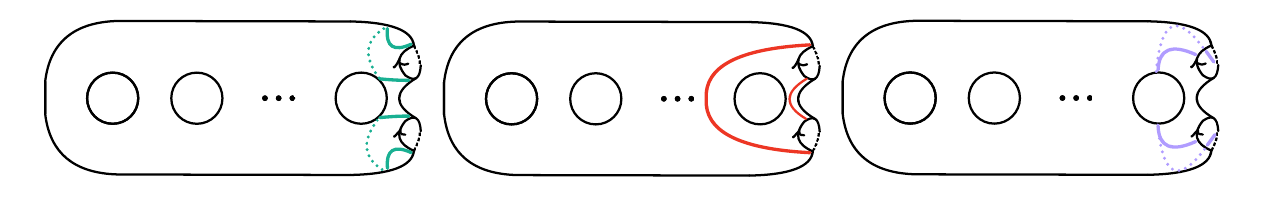}
         \put(27,12){\textcolor{PineGreen}{$\eta$}}
         \put(54,11){\textcolor{red}{$\psi$}}
         \put(88,12){\textcolor{violet}{$\chi$}}
        
    \end{overpic}
    \caption{The curves $\eta$, $\psi$, and $\chi$ in the even genus case.} \label{fig:epc}

\end{figure}

\begin{proposition}\label{prop:even_twist_lift}
    Up to stabilization, each generator of $\T(F)$ is the lift of a half twist with respect to $p$. In particular, it is enough to stabilize $p$ up to degree $6$.
\end{proposition}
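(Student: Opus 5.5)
The plan is to split the generators of $\T(F)$ into the two families listed above and handle each with Remark \ref{rmk:ht_dt}. That remark says that if a two-sided simple loop $\gamma$ maps injectively under $p$ to a loop bounding a disk with exactly two branch points of equal monodromy, then $t_\gamma$ is the lift of the corresponding half twist.

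\textbf{The Chillingworth curves $\{\alpha_i\}$.} By Proposition \ref{prop:chill_DT_images}, after an isotopy each $p(\alpha_i)$ is an embedded loop in $\mathcal D_p$. It bounds a disk containing at least two branch points with monodromy $(1\,2)$. Since $\alpha_i$ lies in $\Sigma_g^2$ and $p_1$ restricted there is essentially the hyperelliptic double cover of Example \ref{eg:diagram1}, I would check directly from the standard pictures that one can choose the disk to contain exactly two such points. This is the classical fact that the standard chain curves on $\Sigma_g$ lie over arcs joining consecutive branch points. Remark \ref{rmk:ht_dt} then gives $t_{\alpha_i}=\lambda_p(h_{a_i})^{\pm1}$ for a suitable arc $a_i$, with no stabilization needed. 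Inverses are harmless, since $\lambda_p$ is a homomorphism.

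\textbf{The curves $\eta,\psi,\chi$.} These cross $\partial\mathcal D_p$, so their images pass through the Möbius band. Using the rule from Example \ref{eg:diagram3}(1) and Remark \ref{rmk:stabilizations}(1), a sheet-1 strand re-enters antipodally on sheet 2 and conversely. For each curve I would:
\begin{enumerate}[(a)]
\item draw $p(\gamma)$ in the projection diagram, as in panel (1) of the relevant figure;
\item stabilize once more with the pair $(1\,5)$, $(2\,6)$, reaching degree $6$, with the new branch points and cuts placed away from $p(\gamma)$ as in Remark \ref{rmk:stab_curves}, so that the isotopy class of $\gamma$ is unchanged;
\item use the extra sheets to perform liftable isotopies of $p(\gamma)$, pushing arcs across the new cuts and switching sheet labels, until $p(\gamma)$ becomes an embedded loop in $\RP^2$;
\item arrange that this loop bounds a disk containing exactly two branch points with the same transposition.
\end{enumerate}
Remark \ref{rmk:ht_dt} then shows $t_\gamma$ is the lift of a half twist.

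Next I would confirm that the degree-$6$ cover still handles the $\alpha_i$. This holds because the stabilizations are away from their images, so the same arcs work. Since $\lambda_p$ is a homomorphism and these twists generate $\T(F)$, the image contains $\T(F)$.

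\textbf{Main obstacle.} The hard step is (c) for $\eta$, $\psi$ and $\chi$. Their images are initially immersed and wind through the Möbius band, and one must exhibit explicit moves that are genuinely liftable rather than arbitrary isotopies in the base. I would first try sliding the strand that crosses $\partial\mathcal D_p$ around a new stabilizing branch point, say the one labeled $(1\,5)$. This turns the sheet-$1$/sheet-$2$ crossing into an arc on sheet $5$, which can then be retracted past the cut, removing self-intersections one at a time.
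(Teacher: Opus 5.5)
Your overall frame matches the paper's: reduce each generator to Remark \ref{rmk:ht_dt} by stabilizations and liftable isotopies, ending at degree $6$. However, the $\{\alpha_i\}$ step fails as stated, and the $\eta,\psi,\chi$ step is missing its key ingredient.

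\textbf{The curves $\{\alpha_i\}$.} Chillingworth's list contains Lickorish-type curves $\alpha_j,\beta_j,\gamma_j$ for every handle of the orientable part, not just one hyperelliptic chain. Suppose $p(c)$ bounds a disk containing exactly two $(1\,2)$ points in the double-cover picture. Then $c$ is isotopic to the core of the annulus over that disk, so it is invariant under the hyperelliptic involution $\iota$, and $t_c$ commutes with $\iota$. If this held for all of Lickorish's curves, $\iota$ would commute with a generating set of $\M(\Sigma_N)$ and hence be central, which is false for $N\ge 3$. For instance, in the standard picture the meridian of an intermediate handle has image enclosing at least four $(1\,2)$ points, and no choice of disk changes that. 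This is why Proposition \ref{prop:chill_DT_images} only says ``at least two''. It is also why the paper invokes Zuddas's argument for the $\alpha_i$. The points $(1\,3)$, $(2\,4)$ already present in $p_1$ supply extra sheets. Routing part of $p(\alpha_i)$ through their cuts puts it on sheet $3$ or $4$, where it can slide liftably across $(1\,2)$ points until exactly two remain inside. What makes this step work is the $4$-fold structure, not the double cover.

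\textbf{The curves $\eta,\psi,\chi$.} Your steps (c)--(d) restate the goal rather than prove it. What is missing is a liftability rule and the bigon-removal move built from it.
\begin{itemize}
\item The rule: an arc labeled $a$ may be pushed across a branch point with monodromy $(b\,c)$ when $a\notin\{b,c\}$, and crossing a cut changes the label.
\item The move is Lemma \ref{lem:bigon}. After one stabilization with $(b\,c)$, drag the $a$-labeled side of the bigon over the new point. Then push the $b$-labeled side through the new cut so that it becomes $c$-labeled, slide it across the $(a\,b)$ points, and cancel the bigon.
\end{itemize}
With these tools, $\psi$ and $\chi$ need no new stabilization at all, only $(1\,3)$, $(2\,4)$ and the antipodal $1\leftrightarrow 2$ rule. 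Only $\eta$ uses Lemma \ref{lem:bigon} plus one further stabilization, which produces the pair $(1\,5)$, $(2\,6)$ and degree $6$. Your heuristic of rerouting through a $(1\,5)$ cut onto sheet $5$ is in the spirit of that lemma. But until such a move is stated and each slide is checked against the current label, you have not shown that the final embedded loop is the image of a curve isotopic to the original one.
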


To prove this, we first show a technical lemma.

\begin{lemma} \label{lem:bigon}
    Consider the branched cover of $D^2$ and the labeled projection diagram described on the left side of Figure \ref{fig:bigon_lem}. Then, after one stabilization, the diagram is isotopic through liftable isotopies to the right side of Figure \ref{fig:bigon_lem}.

    \begin{figure}
        \centering
      
        \begin{tikzpicture}
         \node[anchor=south west, inner sep=0] (fig) at (0,0) 
           {\includegraphics[width=6in]{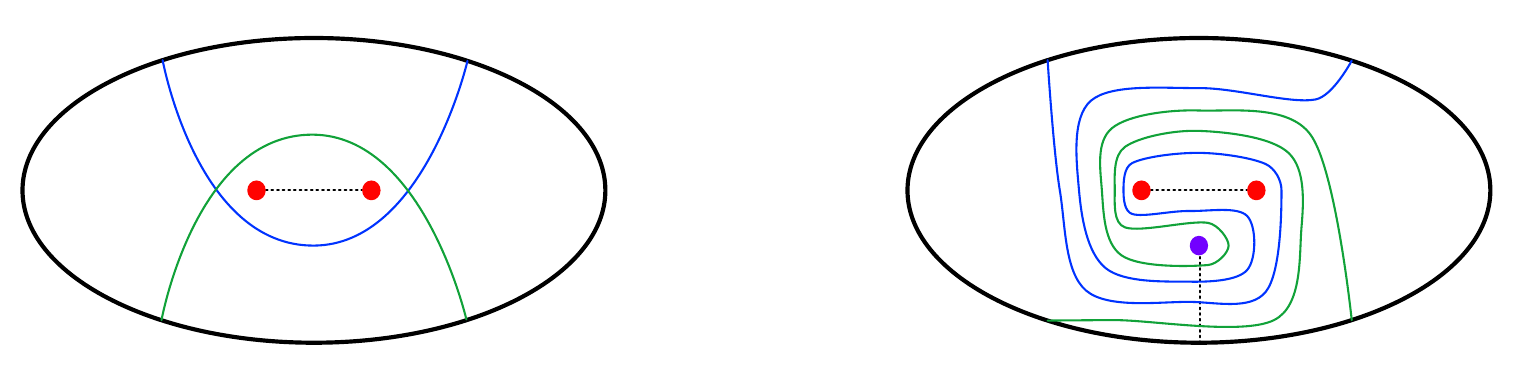}};
           
            \node at (7.5, 2) {\scalebox{2}{$\rightarrow$}};
            \node[ForestGreen] at (1.5,.25) {$b$};
            \node[blue] at (1.5,3.5) {$a$};

            \node[ForestGreen] at (10.5,.25) {$b$};
            \node[blue] at (10.5,3.5) {$a$};
            \node[ForestGreen] at (13.5,.25) {$b$};
            \node[blue] at (13.5,3.5) {$a$};

            \node at (12,0) {$(b\, c)$};
            
           \end{tikzpicture}

        \caption{In the base diagrams shown, $a, b,$ and $c$ are all pairwise distinct. All the red branch points shown have monodromy $(a\,b)$.}
        \label{fig:bigon_lem}
    \end{figure}
\end{lemma}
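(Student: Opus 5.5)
We cannot see the figure, so the plan rests on the standard picture the labels suggest. On the left, a labeled curve crosses an arc or region between red branch points with monodromy $(a\,b)$, forming a bigon. One side of the bigon lies on sheet $a$ and the other on sheet $b$, and the bigon cannot be removed directly: pushing the curve across a cut labeled $(a\,b)$ would swap its sheet label and change the lift. On the right, after adding one stabilizing branch point with monodromy $(b\,c)$, the bigon has been pulled past the red points so that the curve is in standard position.

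The plan is to construct the isotopy explicitly as a finite composition of elementary liftable moves, in the spirit of the moves for liftable isotopies of covers of $D^2$ in \cite{Zuddas2007RepresentingDT}.

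\textbf{Step 1: stabilize.} Stabilize the cover of $D^2$ by introducing a new sheet $c$ and a branch point with monodromy $(b\,c)$. Place it, with its cut to $\partial D^2$, in a region disjoint from the curve, as in Remark \ref{rmk:stab_curves}. This does not change the isotopy class of the lifted curve. The point of sheet $c$ is that it is an extra sheet attached only to sheet $b$, along the new cut.

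\textbf{Step 2: move the cut.} Slide the new cut, with its endpoint branch point fixed, so that it crosses the strand of the curve labeled $b$. Moving a cut, rather than a branch point, only changes the cut system and not the cover. Hence this is a relabeling, and it is trivially liftable: the portion of the $b$-strand beyond the cut is now labeled $c$. The $a$-strand is unaffected, since $(b\,c)$ fixes $a$.

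\textbf{Step 3: isotope the $c$-strand.} The strand now labeled $c$ can be swept across the red branch points with monodromy $(a\,b)$. Those points are not branch points for sheet $c$, so this isotopy of the projected curve lifts to an honest isotopy in sheet $c$. This is the key move that removes the obstruction present before stabilization.

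\textbf{Step 4: return the strand.} Move the strand back across the $(b\,c)$ cut, returning the label to $b$. Then tidy the diagram by further cut slides and planar isotopies in the complement of the branch set until it matches the right side of Figure \ref{fig:bigon_lem}. The $(b\,c)$ point remains, as shown there.

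\textbf{Checking liftability.} Each step is either a change of cut system or an isotopy of a labeled arc in a sheet on which no branch point it crosses acts. So each lifts, by the lifting criterion of Proposition \ref{prop:liftcrit} applied to an isotopy rel $B_p$ (the monodromy of the ambient diffeomorphisms is constant). Alternatively, one can check liftability by verifying that the sheet labels of the curve remain consistent at every crossing with a cut.

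\textbf{Main obstacle.} The delicate point is Step 3 and the bookkeeping around it. We must check that while the $c$-labeled strand crosses the $(a\,b)$ cuts, the labels on both sides agree, and that the $a$-strand never needs to cross the $(b\,c)$ cut, where its label would be unchanged but an inconsistency could appear. I would verify this by tracking the labels of each arc of the curve in the complement of the cut system before and after each move. If the direct slide fails, the fallback is to route the $c$-strand around the $(b\,c)$ branch point itself, in the manner of a Hurwitz move, and recheck the labels.
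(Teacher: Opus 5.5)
Your plan is essentially the paper's argument: stabilize with a $(b\,c)$ branch point, isotope the $b$-strand through its cut so that part of it becomes $c$-labeled, slide that $c$-labeled part over the red $(a\,b)$ points, then straighten the $a$-arc and eliminate the resulting bigon, which no longer contains red points. One correction to your ``main obstacle'': the $a$-strand can cross the $(b\,c)$ cut, and can even be dragged over the new branch point, since this is liftable because $a\notin\{b,c\}$; the paper makes exactly this move right after stabilizing, so you should not try to avoid it.
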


\begin{proof}

    The moves exhibiting this isotopy are shown in Figure \ref{fig:lemma_proof}. From (1) to (2), one stabilization is performed. Dragging the blue $a$-labeled arc over the new branch point brings us to (3). This yields a liftable isotopy, since $a$ is different from both $b$ and $c$.  We isotope the green b-labeled arc according to the pink arrow in (3) to arrive at (4). Notice that, when passing through the cut system associated to the blue branch point, the label of the green curve changes from $b$ to $c$. That's why we can slide its $c$-labeled part over the red branch points via a liftable isotopy. An isotopy of the blue arc according to the arrow in (4) yields (5). We then eliminate the yellow-shaded bigon to reach (6), completing the proof.
   
    \begin{figure}
        \centering

        \begin{tikzpicture}
         \node[anchor=south west, inner sep=0] (fig) at (0,0) 
           {\includegraphics[width=6in]{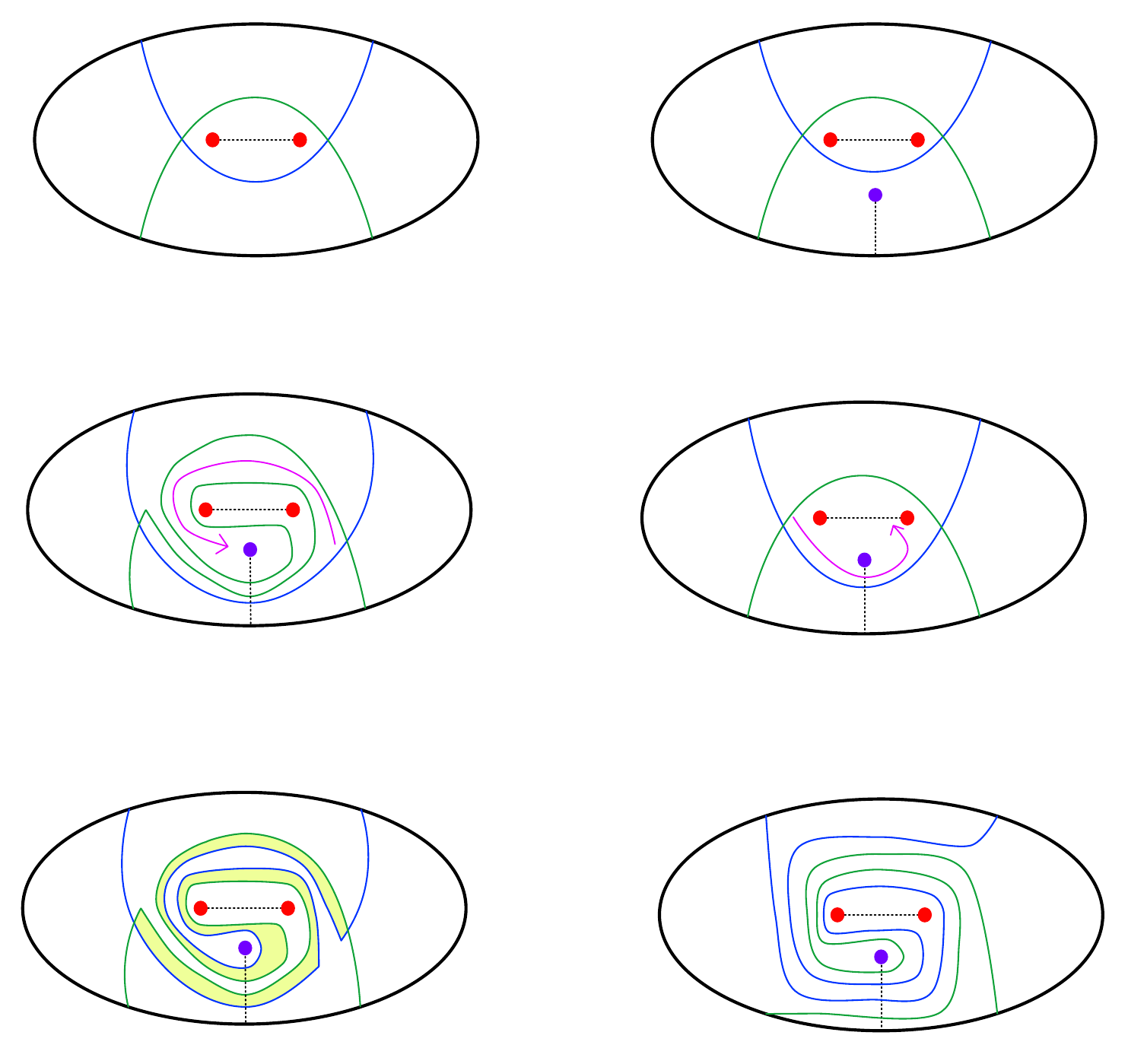}};
         
         
            \node at (.5,14.5) {(1)};
            \node[ForestGreen] at (1.6,10.9) {$b$};
            \node[blue] at (1.6,14) {$a$};
            \node at (7.5, 12.5) {\scalebox{2}{$\rightarrow$}};
            
            \node at (9,14.5) {(2)};
            \node[ForestGreen] at (10.25,10.75) {$b$};
            \node[blue] at (10.25,14.2) {$a$};
            \node at (11.75, 10.5) {$(b\, c)$};
            \node[rotate=-90] at (11.75, 9.7) {\scalebox{1.5}{$\rightarrow$}};

            \node at (9,9.5) {(3)};
            \node[ForestGreen] at (10.1,5.8) {$b$};
            \node[blue] at (10.1,9) {$a$};
            \node at (11.75, 5.5) {$(b\, c)$};
            \node at (7.5, 7.5) {\scalebox{2}{$\leftarrow$}};

            \node at (.5,9.5) {(4)};
            \node[ForestGreen] at (1.6,5.8) {$b$};
            \node[blue] at (1.6,9) {$a$};
            \node at (3.35, 5.5) {$(b\, c)$};
            \node[rotate=-90] at (3.3,4.5) {\scalebox{1.5}{$\rightarrow$}};
            
            \node at (.5,4.25) {(5)};
            \node[ForestGreen] at (1.5,.35) {$b$};
            \node[blue] at (1.5,3.55) {$a$};
            \node at (7.5, 2) {\scalebox{2}{$\rightarrow$}};
            
            \node at (9,4.25) {(6)};
            \node[ForestGreen] at (10.25,.35) {$b$};
            \node[blue] at (10.25,3.55) {$a$};
            \node[ForestGreen] at (13.5,.35) {$b$};
            \node[blue] at (13.5,3.55) {$a$};
            \node at (12,0) {$(b\, c)$};
            
           \end{tikzpicture}
           
        \caption{Proof of Lemma \ref{lem:bigon}}
        \label{fig:lemma_proof}
    \end{figure}

\end{proof}

\begin{proof}[Proof of Proposition \ref{prop:even_twist_lift}]
    Let $t_\delta$ be one of Chillingworth's generators of $\T(F)$. To prove that $t_\delta$ is the lift of a half twist, we follow the same recipe as \cite[Theorem 1]{Zuddas2007RepresentingDT}. Namely, we show that, up to stabilizations of $p$ and liftable isotopies, $p(\delta)$ bounds a disk containing exactly two branch points. We proceed by doing this for Dehn twists along the curves $\{\alpha_i\}$ and $\eta$, $\psi$, and $\chi$.
    
    By Proposition \ref{prop:chill_DT_images}, for each $i$, $p(\alpha_i)$ is an embedded loop contained entirely in $\mathcal D_p \subset \RP^2$, and which bounds at least two branch points with monodromy $(1\,2)$. Hence $p(\alpha_i)=p_1(\alpha_i)$, and it suffices to consider the restriction of $p$ to $p_1$, which is a cover of the disk $\mathcal D_p$. The arguments from \cite{Zuddas2007RepresentingDT} show that up to stabilization of $p_1$, $t_{\alpha_i}$ is the lift of a half twist. Moreover, since $p_1(\alpha_i)$ is embedded, one stabilization is enough. The two stabilizations already performed to $p_1$, wherein the branch points with monodromy $(1\, 3)$ and $(2\, 4)$ are introduced, are sufficient for isotoping $p(\alpha_i)$ so that it contains exactly two branch points. Therefore, each Dehn twist $t_{\alpha_i}$ is a lift of a half twist via the $4$-fold cover $p_1$, with no further stabilizations needed (cf. Remark \ref{rmk:ht_dt}). For more detail on how to isotope $p(\alpha_i)$, see \cite[Section 4]{Zuddas2007RepresentingDT}.

    Next, we show that Dehn twists along $\eta$, $\psi$, and $\chi$ are lifts of half twists. Below we isotope the image of each of these curves under $p$, through liftable isotopies, such that they bound disks containing exactly two branch points with the right monodromy. Figure \ref{fig:even_psi} gives such an isotopy for $\psi$, and Figure \ref{fig:even_chi} gives an isotopy for $\chi$. In both figures, the isotopies performed between each step are indicated by pink arrows. The disks bounding two branch points at the end of the isotopies are shaded yellow.

    \begin{figure}
        \centering
        
    \begin{tikzpicture}
        \node[anchor=south west, inner sep=0] (fig) at (0,0) {\includegraphics[width=6in]{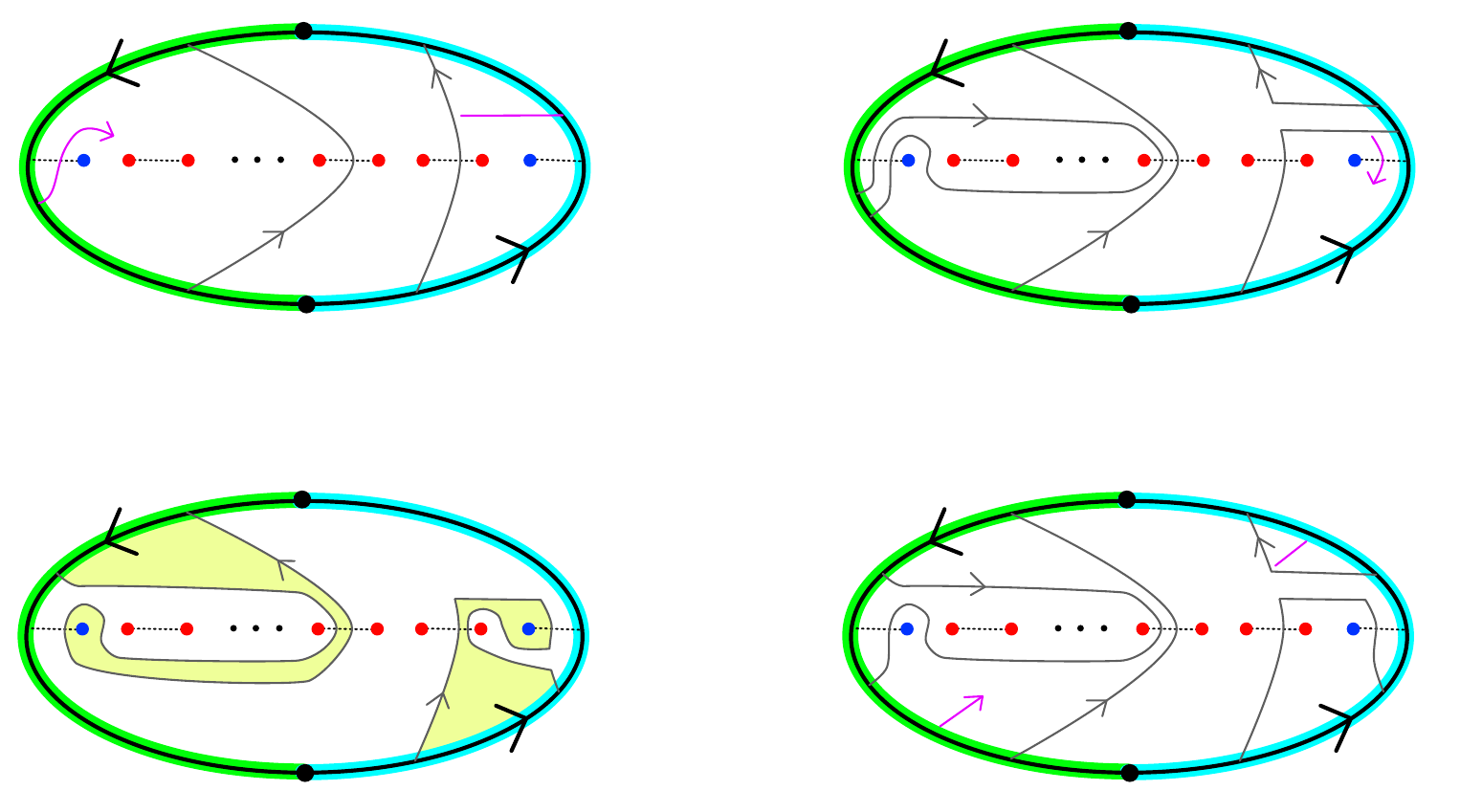}};

        \node at (0,9) {(1)};
        \node[scale=.75] at (-.25,6.75) {$(1\, 3)$};
        \node[scale=.75] at (6.5,6.75) {$(2\,4)$};
        \node[scale=2] at (7.5,6.75) {$\rightarrow$};
        \node[color=gray,scale=.75] at (2.5,6) {$1$};
        \node[color=gray,scale=.75] at (3.3,7.5) {$2$};
        \node[color=gray,scale=.75] at (4.9,6.4) {$1$};
        \node[color=gray,scale=.75] at (4.9,7.3){$2$};

        \node at (8.5,9) {(2)};
        \node[scale=.75] at (8.45,6.75) {$(1\, 3)$};
        \node[scale=.75] at (15.2,6.75) {$(2\,4)$};
        \node[color=gray,scale=.75] at (11,6) {$1$};
        \node[color=gray,scale=.75] at (13.5,6.4) {$1$};
        \node[color=gray,scale=.75] at (10,6.3) {$3$};
        \node[rotate=-90,scale=2] at (11.8,4.3) {$\rightarrow$};

        \node at (8.5,3.5) {(3)};
        \node[scale=.75] at (8.4,1.9) {$(1\,3)$};
        \node[scale=.75] at (15.2,1.9) {$(2\,4)$};
        \node[color=gray,scale=.75] at (11,1) {$1$};
        \node[color=gray,scale=.75] at (13.5,1.5) {$1$};
        \node[color=gray,scale=.75] at (10,1.4) {$3$};
        \node[scale=2] at (7.4,1.9) {$\leftarrow$};

        \node at (0,3.5) {(4)};
        \node[scale=.75] at (-.25,1.9) {$(1\,3)$};
        \node[scale=.75] at (6.5,1.9) {$(2\, 4)$};
        \node[scale=.75,color=gray] at (4.3,.8) {$1$};
        \node[scale=.75,color=gray] at (1.4,1.2) {$1$};

            
    \end{tikzpicture}
        \caption{Isotoping the image of $\psi$ in the even genus case.}
        \label{fig:even_psi}
    \end{figure}

    \begin{figure}
        \centering
            \begin{tikzpicture}
            \node[anchor=south west, inner sep=0] (fig) at (0,0) {\includegraphics[width=6in]{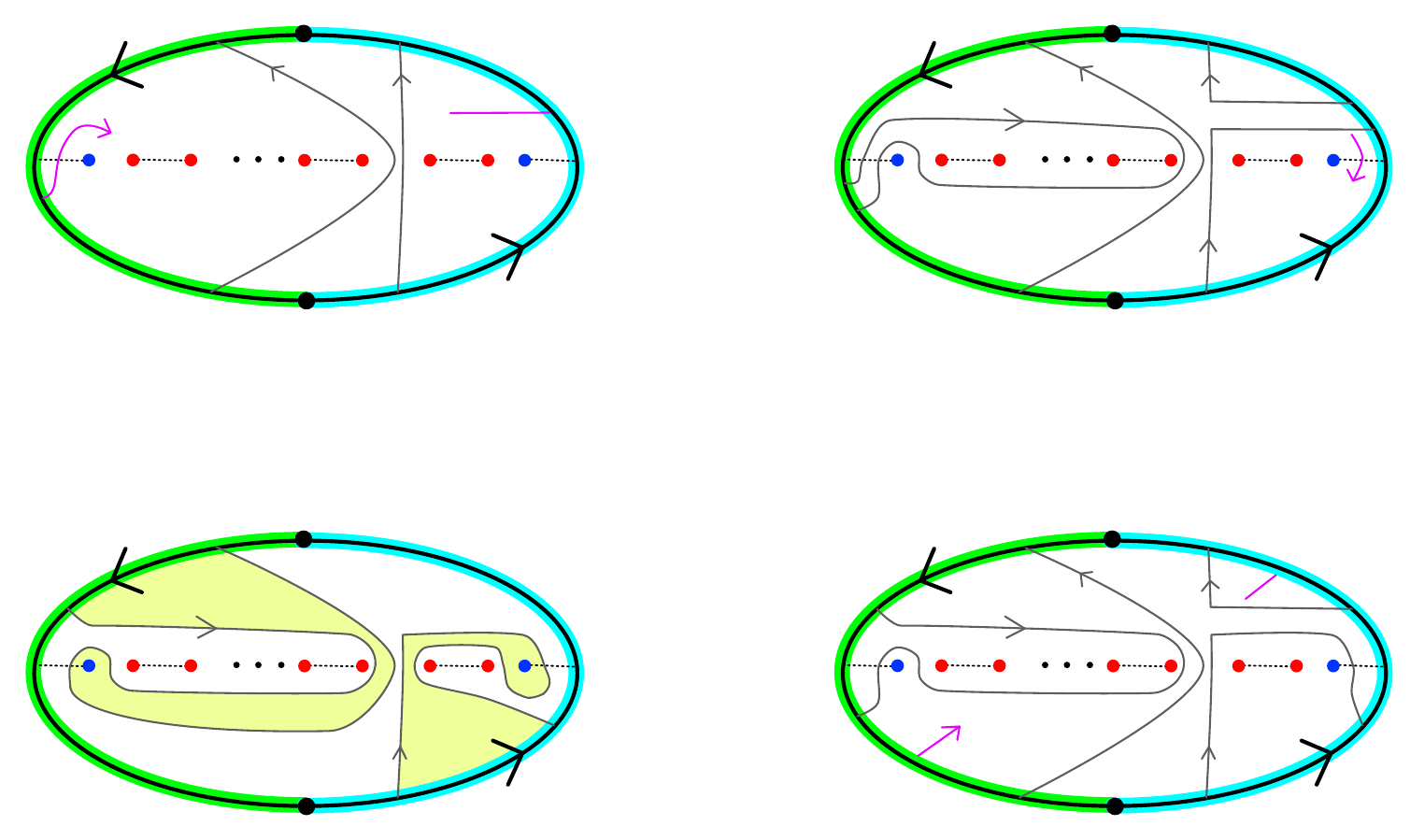}};
    
            \node at (0,9) {(1)};
            \node[scale=.75] at (-.1,7.3) {$(1\, 3)$};
            \node[scale=.75] at (6.7,7.3) {$(2\,4)$};
            \node[scale=2] at (7.6,7.3) {$\rightarrow$};
            \node[color=gray,scale=.75] at (3,6.5) {$1$};
            \node[color=gray,scale=.75] at (4.55,6.5) {$2$};

            \node at (8.5,9) {(2)};
            \node[scale=.75] at (8.65,7.3) {$(1\, 3)$};
            \node[scale=.75] at (15.4,7.3) {$(2\,4)$};
            \node[color=gray,scale=.75] at (11.6,6.5) {$1$};
            \node[color=gray,scale=.75] at (13.4,6.5) {$2$};
            \node[color=gray,scale=.75] at (10.5,6.8) {$3$};
            \node[rotate=-90,scale=2] at (12,4.5) {$\rightarrow$};
    
            \node at (8.5,3.5) {(3)};
            \node[scale=.75] at (8.65,1.9) {$(1\,3)$};
            \node[scale=.75] at (15.4,1.9) {$(2\,4)$};
            \node[color=gray,scale=.75] at (11.6,1) {$1$};
            \node[color=gray,scale=.75] at (13.4,1) {$2$};
            \node[color=gray,scale=.75] at (10,1.4) {$3$};
            \node[scale=2] at (7.7,1.9) {$\leftarrow$};
    
            \node at (0,3.5) {(4)};
            \node[scale=.75] at (-.1,1.9) {$(1\,3)$};
            \node[scale=.75] at (6.7,1.9) {$(2\, 4)$};
            \node[color=gray,scale=.75] at (4.1,.8) {$2$};
            \node[scale=.75,color=gray] at (1.4,1.1) {$1$};

                
        \end{tikzpicture}
        
        \caption{Isotoping the image of $\chi$ in the even genus case.}
        \label{fig:even_chi}
    \end{figure}

    Lastly, we show that $t_\eta$ is the lift of a half twist. The isotopies for this are shown in Figure \ref{fig:even_eta}. In this figure, we apply Lemma \ref{lem:bigon}, and then stabilize an additional time as in Remark \ref{rmk:stabilizations} to arrive at (2). Then we isotope the curve as indicated by the arrow to arrive at (3). Once again, the disk bounding two branch points is shaded yellow.

    \begin{figure}
        \centering
        \begin{tikzpicture}
                \node[anchor=south west, inner sep=0] (fig) at (0,0) {\includegraphics[width=3in]{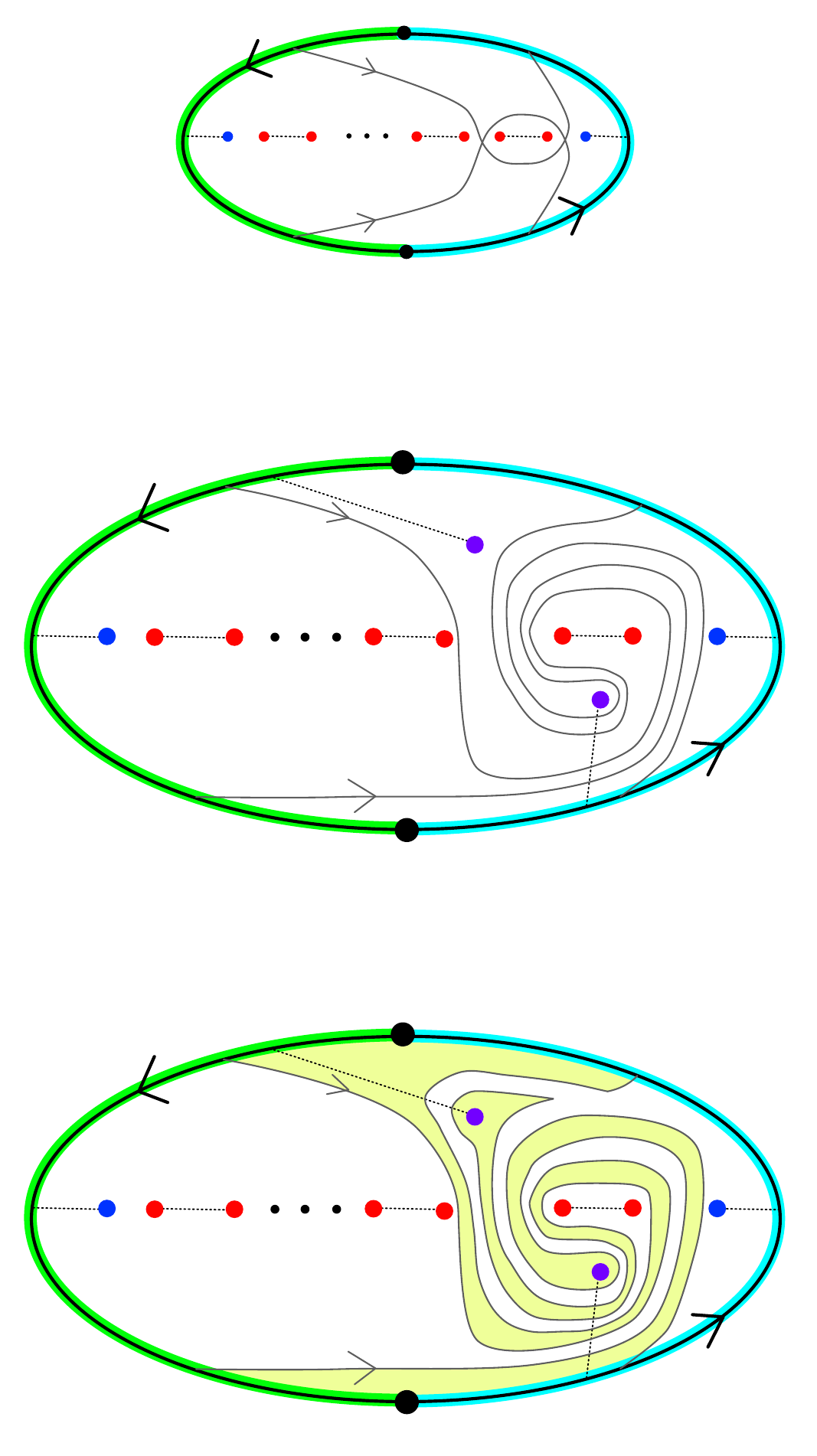}};
    
                \node at (0,14) {(1)};
                \node[scale=.75] at (1.3,12.3) {$(1\, 3)$};
                \node[scale=.75] at (6.3,12.3) {$(2\,4)$};
                \node[color=gray,scale=.75] at (3.8,11.8) {$1$};
                \node[color=gray, scale=.75] at (4.3,12.8) {$2$};
                \node[scale=2,rotate=-90] at (3.7,10.3) {$\rightarrow$};
    
                \node at (0,9.3) {(2)};
                \node[scale=.75] at (-.2,7.6) {$(1\,3)$};
                \node[scale=.75] at (7.7,7.6) {$(2\, 4)$};
                \node[scale=.75] at (5.6,5.7) {$(1\, 5)$};
                \node[scale=.75] at (2.4,9.5) {$(2\,6)$};
                \node[scale=.75,color=gray] at (2.5,6.3) {$1$};
                \node[scale=.75,color=gray] at (2.6,8.7) {$2$};
                \node[scale=2,rotate=-90] at (3.7,4.9) {$\rightarrow$};
    
                \node at (0,4) {(3)};
                \node[scale=.75] at (-.2,2.35) {$(1\,3)$};
                \node[scale=.75] at (7.7,2.35) {$(2\, 4)$};
                \node[scale=.75] at (5.6,.4) {$(1\, 5)$};
                \node[scale=.75] at (2.4,4.2) {$(2\,6)$};
                \node[scale=.75,color=gray] at (2.5,1) {$1$};
                \node[scale=.75,color=gray] at (2.6,3.4) {$2$};
                
            \end{tikzpicture}
        \caption{Isotoping the image of $\eta$ in the even genus case.}
        \label{fig:even_eta}
    \end{figure}

    This completes the proof that each generator of $\T(F)$ is the lift of a half twist. At least two stabilizations are needed, so the total degree of $p$ is at least six by the end of this proof.
\end{proof}

\subsection{Case 2: $g(F)=k$ is odd}\label{sec:k_odd}

Now we begin with the degree-three cover $q\colon F\rightarrow \RP^2$ from Example \ref{eg:diagram3}(2), and show that, up to a single pair of stabilizations, $\lambda_q$ surjects onto $\T(F)$. Chillingworth \cite{Chillingworth_1969} shows that in the odd case, $\T(F)$ is generated by Dehn twists along the following 2-sided simple loops:
\begin{itemize}
    \item[-] the collection of loops given in Theorem \ref{thm:chillingworth_69} (ii), shown in Figure \ref{fig:chill_generators}. We will re-label these curves and denote the entire collection by $\{\alpha_i\}$. Note that each $\alpha_i$ is contained in the interior of $\Sigma_g^1= F \setminus \Int D^2$;

    \item[-] the curves $\eta$ and $\psi$, which we define below.
\end{itemize}

Figure \ref{fig:ep} shows the intersection of $\eta$ and $\psi$ with $\Sigma_g^1=F \setminus \Int D^2$. To complete them to simple loops, we identify antipodal points of their intersections with $\partial \Sigma_g^1$. Their images on $\mathcal D_q$ under $q$ are shown on the left-hand sides of Figures \ref{fig:odd_psi} and \ref{fig:odd_eta}, respectively.
   \begin{figure}
    \centering
     \begin{overpic}[width=6in]{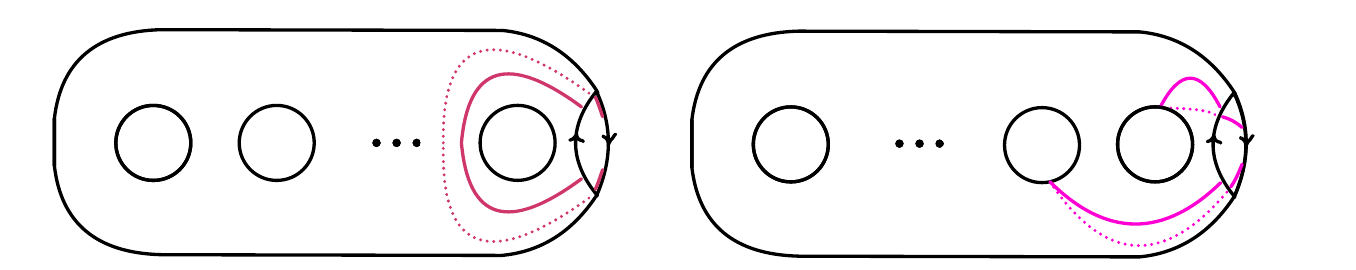}
        \put(31,15){\textcolor{purple}{$\eta$}}
        \put(84,14){\textcolor{RubineRed}{$\psi$}}
    \end{overpic}
    \caption{The curves $\eta$ and $\psi$ in the odd genus case.} \label{fig:ep}

\end{figure}

\begin{proposition}\label{prop:odd_twist_lift}
    Up to stabilization, each generator of $\T(F)$ is the lift of a half twist with respect to $q$. In particular, it is enough to stabilize $q$ up to degree $5$.
\end{proposition}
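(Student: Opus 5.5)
We follow the same recipe as in the proof of Proposition \ref{prop:even_twist_lift}, modelled on \cite[Theorem 1]{Zuddas2007RepresentingDT}. For each generator $t_\delta$ of $\T(F)$ we show that, after stabilizing $q$ as in Remark \ref{rmk:stabilizations}(2) and applying liftable isotopies, $q(\delta)$ becomes an embedded loop bounding a disk with exactly two branch points. Both branch points then carry the same transposition, and Remark \ref{rmk:ht_dt} gives $t_\delta=\lambda_q(h_\alpha)^{\pm1}$ for an arc $\alpha$ joining them. The generators split into the curves $\{\alpha_i\}$ of Theorem \ref{thm:chillingworth_69}(ii) and the two extra curves $\eta$ and $\psi$ of Figure \ref{fig:ep}, and we treat these two groups separately.

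For the curves $\alpha_i$, Proposition \ref{prop:chill_DT_images} shows that $q(\alpha_i)$ is an embedded loop in $\mathcal D_q$ bounding a disk that contains at least two $(1\,2)$ branch points. Since $\alpha_i\subset\Int\Sigma_g^1$, only the disk cover $q_1$ is involved, and we may appeal to \cite[Section 4]{Zuddas2007RepresentingDT}. There, one stabilization suffices for an embedded image curve. The $(1\,3)$ branch point already present in $q_1$ plays the role of that stabilization, because it is the branch point used to slide parts of $q(\alpha_i)$ with label different from $1,2$ across the remaining $(1\,2)$ points. Thus each $t_{\alpha_i}$ already lies in the image of $\lambda_q$ in degree $3$.

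For $\psi$ and $\eta$ the images cross $\partial\mathcal D_q$ and pass through the Möbius band. We use the rule of Remark \ref{rmk:stabilizations}(2): an arc labelled $2$ leaving through the blue edge re-enters antipodally with label $1$, and one labelled $1$ leaving through the green edge re-enters with label $2$. For $\psi$, we plan to mimic Figures \ref{fig:even_psi} and \ref{fig:even_chi}. Using the $(1\,3)$ cut, we relabel a strand to sheet $3$ and slide it across the $(1\,2)$ branch points, which is a liftable isotopy since $3\notin\{1,2\}$. We then pull the curve off the band so that it encloses two adjacent $(1\,2)$ points. For $\eta$, we expect a bigon between two strands of $q(\eta)$ that is not removable directly. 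Here we apply Lemma \ref{lem:bigon} with $\{a,b\}=\{1,2\}$ and $c$ one of the stabilizing sheets, and we add the pair of stabilizations $(1\,4),(1\,5)$ of Remark \ref{rmk:stabilizations}(2). This raises the degree to $5$, and the extra cuts let the curve be isotoped across the remaining branch points until it bounds a disk with exactly two of them.

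The main obstacle is the curve $\eta$. We must check that every slide is liftable, meaning the moving strand carries a label outside the transposition of each branch point it crosses. We must also check that the antipodal re-entry rule is respected after the new cuts are inserted. This is exactly why the stabilizations must be placed as in Figure \ref{fig:odd_stabilize}, which keeps the blue and green edges of sheets $2$ and $1$ antipodal. Since only one pair of stabilizations is used, the final degree is $3+2=5$.
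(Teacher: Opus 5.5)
Your proposal follows essentially the same route as the paper. The curves $\alpha_i$ are handled by Zuddas's argument using only the $(1\,3)$ branch point already present in $q_1$ (degree $3$), $\psi$ needs no further stabilization, and $\eta$ needs exactly one pair of stabilizations $(1\,4),(1\,5)$ placed as in Remark~\ref{rmk:stabilizations}(2), giving degree $5$; Remark~\ref{rmk:ht_dt} then turns each final two-point disk into a half twist lifting to the Dehn twist, and, as in the paper, the liftability of each individual move ultimately rests on checking the explicit diagrams (Figures~\ref{fig:odd_psi} and~\ref{fig:odd_eta}), which you correctly flag as the delicate point.
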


\begin{proof}
    Just like the even case, the methods in \cite{Zuddas2007RepresentingDT} may be applied to show that modulo stabilization, a Dehn twist around each $\alpha_i$ is a lift of a half twist. Similar to Proposition \ref{prop:even_twist_lift}, the stabilizations already performed to $q_1$ are sufficient to ensure that each $t_{\alpha_i}$ is the lift of a half twist, so the degree of $q$ is still three up to this point. Next we show that $t_\eta$ and $t_\psi$ are both lifts of half twists. This amounts to showing that their images on $\RP^2$ are related through stabilizations and liftable isotopies to curves which bound disks containing exactly two branch points. All stabilizations must be performed in accordance with Remark \ref{rmk:stabilizations}.

    No stabilizations are needed to show that $t_\psi$ is a lift of a half twist. On the other hand, we use one pair of stabilizations to exhibit this same property for $t_\eta$. Figure \ref{fig:odd_psi} (resp. Figure \ref{fig:odd_eta}) shows the image of $\psi$ (resp. $\eta$) before and after the stabilizations and isotopies employed. The isotopies and stabilizations performed are similar to those in the even genus case, which are described in more detail in the proof of Proposition \ref{prop:even_twist_lift}. Hence each generator of $\T(F)$ is the lift of a half twist. At least two stabilizations are needed, so the degree of $q$ by the end of this proof is at least five.\qedhere

    \begin{figure}
        \centering
            \begin{tikzpicture}
                 \node[anchor=south west, inner sep=0] (fig) at (0,0) 
                   {\includegraphics[width=6in]{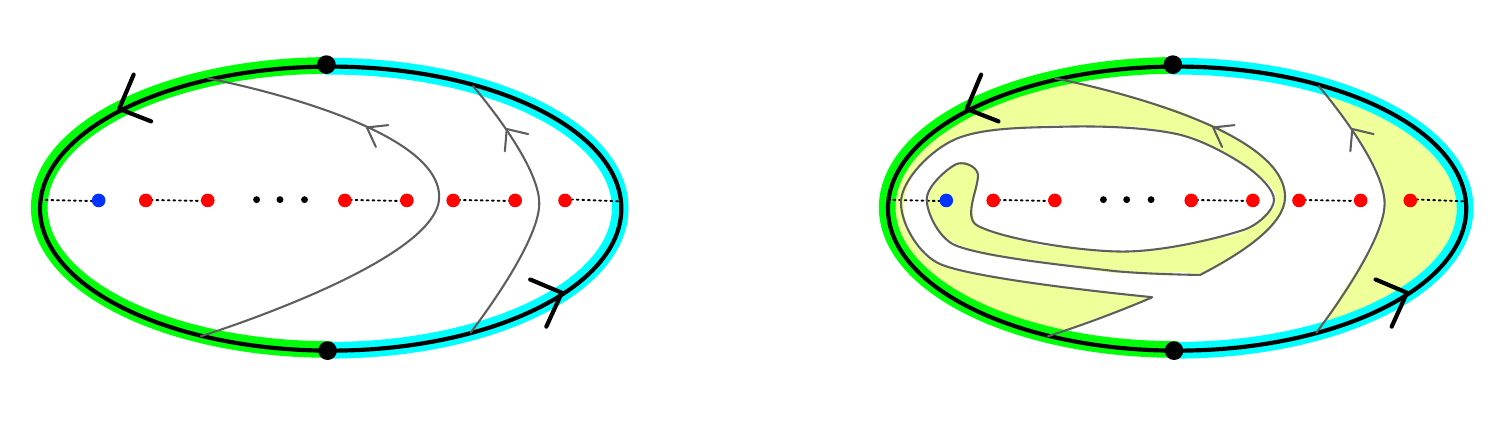}};
                \node at (-.1, 2.35) {\scalebox{.75}{$(1\, 3)$}};
                \node at (8.5, 2.35) {\scalebox{.75}{$(1\, 3)$}};
                \node at (7.25, 2.4) {\scalebox{2}{$\rightarrow$}};
            
                \node at (2.5,1.4) {\scalebox{.75}{\textcolor{gray}{$1$}}};
                \node at (5.6,2.8) {\scalebox{.75}{\textcolor{gray}{$2$}}};
               \end{tikzpicture}                        
        \caption{Isotoping the image of $\psi$ in the odd genus case.}
        \label{fig:odd_psi}
    \end{figure}
           
    \begin{figure}
        \centering
        \begin{tikzpicture}
             \node[anchor=south west, inner sep=0] (fig) at (0,0) 
               {\includegraphics[width=6in]{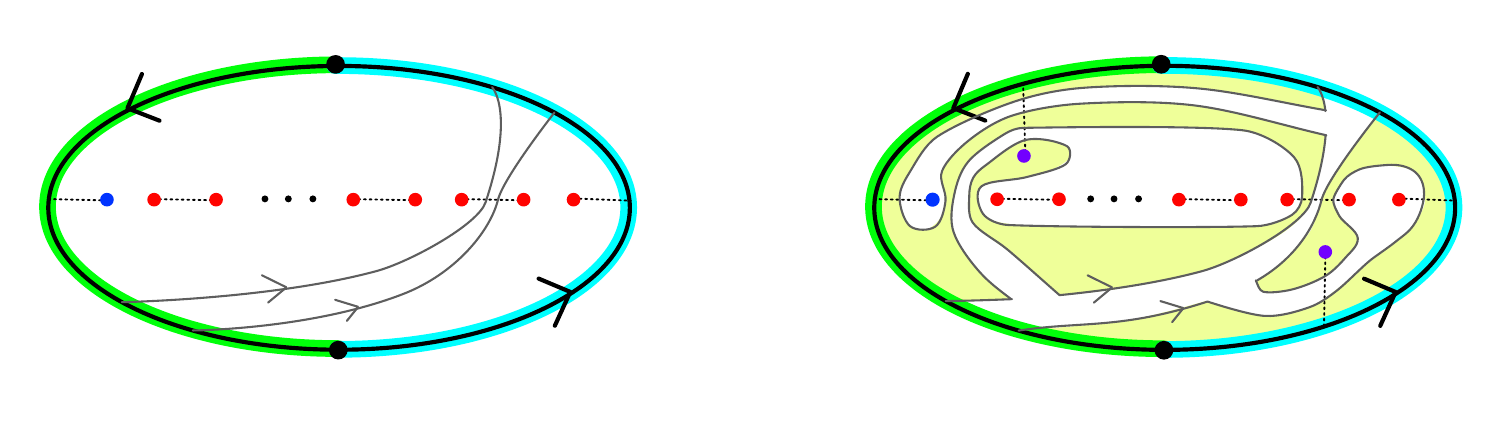}};
            \node at (0, 2.4) {\scalebox{.75}{$(1\, 3)$}};
            \node at (8.4, 2.4) {\scalebox{.75}{$(1\, 3)$}};
            \node at (13.5,.75) {\scalebox{.75}{$(1\,4)$}};
            \node at (10.5,4) {\scalebox{.75}{$(1\,5)$}};
            \node at (7.25, 2.4) {\scalebox{2}{$\rightarrow$}};
        
            \node at (4.5,1.4) {\scalebox{.75}{\textcolor{gray}{$1$}}};
            \node at (3.5,1.75) {\scalebox{.75}{\textcolor{gray}{$1$}}};
            \node at (4.9,3.2) {\scalebox{.75}{\textcolor{gray}{$2$}}};
            \node at (5.6,2.9) {\scalebox{.75}{\textcolor{gray}{$2$}}};
           \end{tikzpicture}
        
        \caption{Isotoping the image of $\eta$ in the odd genus case.}
        \label{fig:odd_eta}
    \end{figure} 
    \end{proof}

Propositions \ref{prop:even_twist_lift} and \ref{prop:odd_twist_lift} together complete the proof of Theorem \ref{thm:chil}. As a direct consequence of Theorem \ref{thm:chil} and Lemma \ref{lem:lifting}, we get the following corollary.
\begin{corollary}\label{cor:lift}
    Let $F$ be a closed connected non-orientable surface and $p \colon F \rightarrow \RP^2$ a simple branched cover of degree $d \geq 5$. Then the image of the lifting homomorphism
    \[\lambda_p \colon \L_p \longrightarrow  \M(F) \]
    contains the twist subgroup $\T(F)$.
\end{corollary}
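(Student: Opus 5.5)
The plan is to combine Theorem \ref{thm:chil} with Lemma \ref{lem:lifting}. Given $F$ and a simple branched cover $p\colon F\to\RP^2$ of degree $d\geq 5$, first observe that $d\equiv \chi(F)\pmod 2$. This is forced by the Stiefel--Whitney computation in the remark following Theorem \ref{thm:gabai_kazez}. Hence the hypotheses of Theorem \ref{thm:chil} hold for this $d$, and that theorem produces a simple $d$-fold branched cover $p'\colon F\to\RP^2$ with $\T(F)\subseteq \operatorname{im}\lambda_{p'}$.

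Next, $p$ and $p'$ are simple covers of $\RP^2$ with the same total space $F$ and the same degree. So Theorem \ref{thm:gabai_kazez}, in the form of Lemma \ref{lem:lifting}, gives diffeomorphisms $\Phi$ of $F$ and $\phi$ of $\RP^2$ with $p\circ\Phi=\phi\circ p'$. They also satisfy $\lambda_{p}(\phi b\phi^{-1})=\Phi\,\lambda_{p'}(b)\,\Phi^{-1}$ for every $b\in\L_{p'}$. Therefore $\operatorname{im}\lambda_p \supseteq \Phi\,\T(F)\,\Phi^{-1}$.

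It remains to note that $\T(F)$ is normal in $\M(F)$, since $\Phi t_\gamma\Phi^{-1}=t_{\Phi(\gamma)}^{\pm1}$. This gives $\Phi\T(F)\Phi^{-1}=\T(F)$, so $\T(F)\subseteq\operatorname{im}\lambda_p$, which is the claim.

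The main point needing care is the conjugation identity: one has to check that $\phi b\phi^{-1}$ really is liftable with respect to $p$, with lift $\Phi\tilde b\Phi^{-1}$. This follows directly from the commutative square. Since the deck group is trivial for $d\geq 3$ (Definition \ref{def:lifthom}), this lift is the unique one, so the identity holds at the level of mapping classes. By Remark \ref{rmk:fulltwist}, the ambiguity coming from the full twist $\Delta_n$ does not affect images.

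Everything else is already contained in Theorem \ref{thm:chil}, so the substantive content of the argument lies there (the explicit liftable isotopies) rather than in this corollary.
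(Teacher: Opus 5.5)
Your proposal is correct and follows the paper's proof: apply Theorem \ref{thm:chil} to get a cover $p'$ with $\T(F)\subseteq\operatorname{im}\lambda_{p'}$, use Theorem \ref{thm:gabai_kazez} and Lemma \ref{lem:lifting} to see that $\operatorname{im}\lambda_p$ contains a conjugate of $\operatorname{im}\lambda_{p'}$, and conclude by normality of $\T(F)$ in $\M(F)$. Your explicit check that $d\equiv\chi(F)\pmod 2$, which is needed to invoke Theorem \ref{thm:chil}, and your verification of the conjugation identity fill in details the paper leaves implicit.
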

\begin{proof}
    By Theorem \ref{thm:chil} there is a simple $d$-fold branched cover $p'\colon F \to \RP^2$ with $\T(F)\subseteq \mathrm{im}\,\lambda_{p'}$. By Theorem \ref{thm:gabai_kazez}, $p$ and $p'$ are equivalent, and by Lemma \ref{lem:lifting} the images of $\lambda_p$ and $\lambda_{p'}$ differ by conjugation by a fixed element of $\M(F)$. Since $\T(F)$ is normal in $\M(F)$, the conclusion follows.
\end{proof}

\subsection{The lifting homomorphism is surjective}
In light of Corollary \ref{cor:lift}, and since $\T(F)$ has index two in $\M(F)$, in order to complete the proof of Theorem \ref{thm:1} it suffices to show that there exists a simple branched cover $p \colon F \rightarrow \mathbb{RP}^2$ for which at least one element in $\M(F) \setminus \T(F)$ is in the image of $\lambda_p$. The conclusion will then follow from Theorem \ref{thm:gabai_kazez}. The aim of this subsection is to build such cover $p$ for every allowable degree $d \geq 5$. We will use the following result from Stukow to verify that a given homeomorphism is not in the twist subgroup.

\begin{proposition}\cite{Stukow2007}\label{prop:stukow}
    Let $F$ be a closed non-orientable surface with $g(F)\geq 2$ and let $f\colon F\to F$ be a homeomorphism. Then $[f]\in \T(F)$ if and only if $f_*\colon H_1(F;\R)\rightarrow H_1(F;\R)$ has determinant $1$.
\end{proposition}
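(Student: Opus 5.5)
The plan is to show that $\det\colon \M(F)\to\{\pm1\}$, $[f]\mapsto \det f_*$, is a well-defined homomorphism whose kernel is exactly $\T(F)$. It is well defined because isotopic maps induce the same map on homology. Its values are $\pm1$ because $f_*$ preserves the lattice $H_1(F;\Z)/\mathrm{torsion}\cong\Z^{g(F)-1}$ inside $H_1(F;\R)$, so $f_*$ is represented by an integer matrix with integer inverse. Multiplicativity is clear. Since $\T(F)$ has index two in $\M(F)$ and $\M(F)$ is generated by $\T(F)$ together with a single $y$-homeomorphism (Lickorish, Theorem \ref{thm:chillingworth_69}), it suffices to prove two things:
\begin{enumerate}[(a)]
\item every Dehn twist has determinant $1$;
\item some $y$-homeomorphism has determinant $-1$.
\end{enumerate}
Given these, $\ker(\det)$ contains $\T(F)$ and is a proper subgroup of $\M(F)$. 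Since $\T(F)$ has index two, $\ker(\det)=\T(F)$.

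For (a), let $\gamma$ be a two-sided simple loop and $A$ an annular neighborhood of $\gamma$. Fix an orientation of $A$. A cycle $x$ can be made transverse to $\gamma$, and the signed count of its crossings of $\gamma$ inside $A$ defines a homomorphism $\varphi\colon H_1(F;\R)\to\R$. This is well defined because it is the intersection pairing with the class of $\gamma$ in $H_1(A,\partial A)$, pulled back via $H_1(F)\to H_1(F,F\setminus A)\cong H_1(A,\partial A)$ by excision. Since $t_\gamma$ is supported in $A$, a standard local computation gives
\[
(t_\gamma)_*(x)=x\pm\varphi(x)[\gamma].
\]
This is a rank-one perturbation of the identity, so $\det=1\pm\varphi([\gamma])$. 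Pushing $\gamma$ off itself inside $A$ shows $\varphi([\gamma])=0$, hence $\det(t_\gamma)_*=1$.

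For (b), I would take $y$ supported on an embedded $K^\circ\subset F$ with $F''=\overline{F\setminus K^\circ}$. Mayer--Vietoris gives
\[
H_1(F;\R)\cong\bigl(H_1(K^\circ;\R)\oplus H_1(F'';\R)\bigr)\big/\langle(c,-c)\rangle,
\qquad c=[\partial K^\circ].
\]
The map $y$ acts as the identity on $H_1(F'';\R)$ and fixes $c$. Write $H_1(K^\circ;\R)=\R\langle m_1,m_2\rangle$, where $m_1,m_2$ are the cores of two disjoint Möbius bands, so that $c=2(m_1+m_2)$. The involution $y_K$ reverses the $I$-direction and hence swaps $m_1$ and $m_2$. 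Its matrix on $H_1(K^\circ;\R)$ therefore has determinant $-1$, acting by $+1$ on $m_1+m_2\propto c$ and by $-1$ on $m_1-m_2$.

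Since $y_*$ fixes $c$, passing to the quotient by the diagonal copy of $\R c$ removes an eigenvalue $+1$. If $c$ is nonzero in $H_1(F'';\R)$, the induced map on $H_1(F;\R)$ has eigenvalues $-1$ (once) and $+1$ (otherwise), so its determinant is $-1$. If instead $c$ is zero in $H_1(F'';\R)$, the quotient has dimension $\dim H_1(K^\circ)+\dim H_1(F'')-1$, and one checks the same conclusion via the eigenvector $m_1-m_2$, which survives in the quotient.

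The main obstacle is this step: pinning down precisely how $y$ acts on $H_1(K^\circ)$ (in particular, verifying that the $y$-homeomorphism swaps the two one-sided cores rather than fixing them) and tracking the eigenvalue $-1$ through the Mayer--Vietoris quotient. I would first verify this directly in the model $I\times S^1/\!\sim$, then in the case $F=K$, and only then for general $F$.
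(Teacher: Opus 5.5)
The paper gives no proof of this statement; it only quotes Stukow. Your overall strategy is the standard one and is sound. $\det$ is a homomorphism $\M(F)\to\{\pm1\}$, and part (a) correctly puts every Dehn twist in its kernel: the transvection formula and $\varphi([\gamma])=0$ are right. Since $\T(F)$ has index two, it then suffices to exhibit one mapping class of determinant $-1$.

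The step that fails is exactly the one you flagged. The map $y_K$ induced by $(s,t)\mapsto(1-s,t)$ does \emph{not} swap the two one-sided cores. These cores are the sections $I\times\{t_0\}$ over the two fixed points $t_0$ of the reflection, and $y_K$ carries each of them to itself with its direction reversed.

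To see the actual action, take $D$ around the isolated fixed point $(0,0)$. There $y_K$ is locally a rotation by $\pi$, so it can be isotoped to fix $D$ pointwise. Let $m=I\times\{1/2\}$ and let $z=\{1/2\}\times S^1$ be the fiber. Then $H_1(K^\circ;\Z)=\Z\langle m,z\rangle$ and $c=\pm 2z$, with $y_*(m)=-m$ and $y_*(z)=z$. The other core $m'$ (pushed off $D$) goes to $m'\pm 2m$. In particular, with your normalization $c=2(m_1+m_2)$, the vector $m_1-m_2$ is not an eigenvector, and the $-1$-eigenvector is the core disjoint from $D$.

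The determinant you need is nevertheless $-1$, and you can get it without coordinates. Since $y$ is the identity on $\partial K^\circ$, you have $y_*c=c$. The induced map on $H_1(K^\circ;\R)/\R c\cong H_1(K;\R)\cong\R$ is $-1$, because this line is spanned by the class of the section $I\times\{1/2\}$, which $y_K$ reverses. Hence $\det\bigl(y_*|_{H_1(K^\circ;\R)}\bigr)=-1$.

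The Mayer--Vietoris step then needs no case distinction. The map $y_*\oplus\id$ preserves the line $\R(c,-c)$ and acts on it by $+1$. By multiplicativity of determinants along an invariant subspace, the induced map on $H_1(F;\R)$ has determinant $-1$. With this replacement your argument is a complete proof.
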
 In other words, taking the determinant of $f_*$ gives an isomorphism between the quotient $\M(F)/\T(F)$ and $\{\pm 1\}$.


\begin{lemma}\label{prop:evengenus_Y}
    Let $F$ be a closed connected non-orientable surface of even genus $g(F) \geq 2$. Then, for every even integer $d \geq 2$, there is a simple $d$-fold branched cover $p \colon F \rightarrow \mathbb{RP}^2$ such that the image of the lifting homomorphism $\lambda_p$ contains an element in $\M(F) \setminus \T(F)$.
\end{lemma}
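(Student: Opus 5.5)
We will build a cover $p$ that carries a symmetry of $\RP^2$ whose lift reverses the sign of $\det$ on homology. Write $F=N_{2g+2}$, obtained as in Example \ref{eg:diagram3}(1) by gluing $\Sigma_g^2$ to an annulus $A$ lying over $Mb$. The natural candidate is a lift of a reflection-type involution $r$ of $\RP^2$. Take the hyperelliptic picture of Example \ref{eg:diagram1}, with all $2g+2$ branch points (monodromy $(1\,2)$) on a line $L$ through the center of $\mathcal D_p$. Take $r$ to be reflection of $\mathcal D_p$ across $L$, extended to $Mb$ as the reflection preserving the core $\RP^1$. This $r$ fixes $B_p$ pointwise. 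Since $\M(\RP^2)$ is trivial, $r$ is isotopic to the identity on $\RP^2$. Rel $B_p$, however, it is a genuine mapping class; by Remark \ref{rmk:fulltwist} it can be treated as a braid class, or we can simply work with diffeomorphisms of $(\RP^2,B_p)$.

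\textbf{Step 1: arrange the stabilizing data symmetrically.} Place the stabilizing branch points with monodromies $(1\,3)$, $(2\,4)$, and for higher $d=2k$ also $(1\,5),(2\,6),\dots$ as in Remark \ref{rmk:stabilizations}. Put them on $L$ as well, or in $r$-symmetric position with cuts along $L$ or on $\partial \mathcal D_p$ and symmetric under $r$. Also choose the cut of $Mb$, labeled $\omega(\ell)=(1\,2\,\cdots\,2k)$, symmetric under $r$. The case $d=2$ needs a separate degree-two construction: use the double cover $\Sigma_g^2\cup A\to \mathcal D\cup Mb$ without stabilizations, with $\omega(\ell)=(1\,2)$ and $\omega(\partial\mathcal D)=1$. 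The parity relation $\ell^2=\prod\mu_i$ then holds since there are $2g+2$ transpositions $(1\,2)$. Adjusting the number of branch points this way also fixes the Euler characteristic.

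\textbf{Step 2: check liftability via Proposition \ref{prop:liftcrit}.} Take the base point on $L\cap\partial\mathcal D_p$. With the Hurwitz system of meridians chosen symmetric about $L$, $r_*$ sends each $\mu_i$ to $\mu_i^{-1}$ up to a reordering/conjugation that preserves labels, and sends $\ell$ to $\ell^{\pm1}$ (possibly conjugated). Each transposition is its own inverse. The $(1\,3)(2\,4)$-type labels and the cycle $\omega(\ell)$ are preserved up to conjugation by one fixed permutation $\sigma$: for instance, $\sigma$ reversing the cycle sends it to its inverse. Checking this single consistency is the crux. The main obstacle I expect is exactly this: finding a $\sigma$ that simultaneously inverts $\omega(\ell)$ and fixes all the transpositions. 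If that fails, I would instead compose $r$ with a rotation of $Mb$ or use a different symmetric placement.

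\textbf{Step 3: compute the determinant of the lift $\tilde r$.} On $\Sigma_g^2$, the lift of reflection across the axis containing all branch points is a reflection of $\Sigma_g$. It fixes the curves $\alpha_i$ that lie over segments of $L$ and reverses the dual curves $\beta_i$. This contributes $(-1)^g$ to the determinant. The remaining generators of $H_1(F;\R)$ are the one-sided classes through $A$, with $\dim H_1(F;\R)=2g+1$. The extra generator is the class crossing the crosscap, built from an arc over $\RP^1$ together with a boundary component. Track it under $\tilde r$ and compute the total sign. If the total comes out $+1$, replace $\tilde r$ by $\tilde r\circ t$ for a suitable lifted half twist, or use a reflection with only the two stabilization points on the axis, changing the parity of the count of reversed $\beta_i$. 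Proposition \ref{prop:stukow} then gives $[\tilde r]\notin\T(F)$. Finally, the other even degrees follow from symmetric double stabilizations as in Remark \ref{rmk:stabilizations}, each of which preserves the symmetry and does not change the determinant computation.
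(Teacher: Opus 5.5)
\textbf{There is a genuine gap.} The two steps that carry all the content, liftability of $r$ and the sign of $\det\tilde r_*$, are left open, and the fallbacks you offer do not work.

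\textbf{Step 2.} Your description of $r_*$ is not accurate. Put the base point on $L$ and let the meridians $\mu_1,\dots,\mu_n$ approach $b_1,\dots,b_n$ from one side of $L$. Then $r_*(\mu_i)$ is $\mu_i^{-1}$ conjugated by the product of the meridians of $b_1,\dots,b_{i-1}$. If $\ell$ is built from the arc of $L$ crossing $Mb$, then $r_*(\ell)=\ell^{-1}$. The condition you wrote down (a single $\sigma$ centralizing all the transpositions and inverting $\omega(\ell)$) is impossible for $d\geq 4$, since those transpositions generate $S_d$. The correct condition is:
\begin{itemize}
\item $\sigma\,\omega(\ell)\,\sigma^{-1}=\omega(\ell)^{-1}$, and
\item each partial product $\sigma\tau_1\cdots\tau_{i-1}$ (composing left to right) commutes with $\tau_i=\omega(\mu_i)$.
\end{itemize}
This depends on the order of the points along $L$. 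Take $d=4$. The order ``$2g+2$ points labelled $(1\,2)$, then $(1\,3)$, then $(2\,4)$'' forces $\sigma$ to centralize $(1\,2)$ and $(1\,3)$, hence $\sigma=1$, so $r$ does not lift. The order ``$(1\,3)$, then $2g+2$ points $(1\,2)$, then $(2\,4)$'' with $\omega(\ell)=(1\,2\,3\,4)$ does lift, with $\sigma=(1\,3)$.

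\textbf{Step 3.} More seriously, the sign can come out wrong, and your remedies cannot fix it.
\begin{itemize}
\item For $d\geq 3$ the lift $\tilde r$ is unique, hence an involution. Since $H_2(F;\R)=0$, the Lefschetz fixed point theorem gives $\det\tilde r_*=(-1)^{g+m/2}$ on $H_1(F;\R)\cong\R^{2g+1}$, where $m$ is the number of isolated fixed points of $\tilde r$.
\item These points all lie over the isolated fixed point of $r$ on the core of $Mb$. The permutation of that fibre is, up to conjugation, $\sigma\,\omega(\ell)^{\pm1}$, a reflection of the $d$-gon of the other type than $\sigma$. Hence $m=2-\#\mathrm{Fix}(\sigma)$.
\item In the liftable configuration above, $\sigma=(1\,3)$ fixes two sheets. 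So $m=0$ and $\det\tilde r_*=(-1)^g$. Whenever $g$ is even (e.g.\ the Klein bottle), your lift lies in $\T(F)$.
\end{itemize}
None of your fallbacks repairs this:
\begin{itemize}
\item Composing with a lifted half twist cannot change the coset, since Dehn twists lie in $\T(F)$.
\item No rearrangement changes ``the parity of reversed $\beta_i$'': every orientation-reversing involution of $\Sigma_g$ has determinant $(-1)^g$ on $H_1(\Sigma_g)$.
\item The claim that further stabilizations leave the sign unchanged is unfounded. For $d=2$ one of the two lifts has determinant $(-1)^{g+1}$, but after adding $(1\,3)$ and $(2\,4)$ at the ends of $L$ as above, the unique lift has determinant $(-1)^g$.
\end{itemize}

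\textbf{Possible repair.} You would need to choose the order on $L$, depending on $d$ and the parity of $g$, so that $\sigma$ is fixed-point free exactly when $g$ is even. For $d=4$ one can take $(1\,2),(3\,4)$, then $2g$ points $(1\,2)$, then $(1\,4),(2\,3)$, with $\sigma=(1\,2)(3\,4)$. Your proposal does not do this.

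\textbf{The paper's route.} The paper avoids the parity issue altogether. It uses the reflection of $Mb$ fixing its core pointwise (antipodal on $\partial Mb$), places the stabilizing points antipodally in a collar of $\partial D^2$, and untwists to the identity over the $(1\,2)$ points. The lift is the hyperelliptic involution on $F\setminus\nu(\gamma)$, acting as $-\mathrm{id}$ on $H_1(\Sigma_g)$ with determinant $+1$. It is glued to the boundary-swapping reflection of the annulus $\nu(\gamma)$, which reverses the class crossing it. So $\det=-1$ uniformly in $g$ and $d$.
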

\begin{proof}
    Let $p \colon F \rightarrow \mathbb{RP}^2$ be the simple $d$-fold branched cover introduced in Example \ref{eg:diagram3}(1). In particular, $p$ is obtained by gluing together 
    \[p_1 \colon F\setminus \text{Int }(\nu(\gamma)) \longrightarrow D^2 \quad \text{and} \quad p_2 \colon \nu(\gamma) \longrightarrow Mb,\]
    where $\gamma \subset F$ is a simple loop dual to $w_1(F)$, $p_1$ is a stabilization of the $2$-fold cover induced by the hyperelliptic involution, and $p_2$ is a regular $d$-fold cover.

    Now consider the self-diffeomorphism $b \colon \mathbb{RP}^2 \setminus B_p \rightarrow \mathbb{RP}^2 \setminus B_p$ defined as follows. Write $\mathbb{RP}^2=D^2 \cup Mb$, where $Mb \cong [-1,1]^2/\sim$ and $(-1,t)\sim (1,-t)$. On $Mb$, let $b(s,t)=(s,-t)$. This map is well defined on $Mb$ and will be called a \textsl{reflection of $Mb$ along its core}. Note that $b|_{\partial Mb}$ is the antipodal map. Up to an ambient isotopy of the branch set, we can assume without loss of generality that all the branch points of $p_1$ whose monodromy is not $(1\, 2)$ are contained in a small collar $C$ of $\partial D^2$. Moreover, within $C$, we assume that the branch points with monodromy $(1\,3), \dots , (1\,d-1)$ are mapped to the branch points with monodromy $(2\,4), \dots, (2\,d)$ via the antipodal map. We then extend $b$ to all of $D^2$ using the antipodal map times the identity of the interval on $C\cong S^1 \times I$, an isotopy from the antipodal map of $S^1$ to the identity map of $S^1$ in a collar of $\partial (D^2 \setminus C)$ disjoint from the points in $B_p$ with monodromy $(1\,2)$, and then the identity map on the remaining part of $D^2$. Now, $b$ defines a self-diffeomorphism of $\RP^2$, which is induced by some braid in $\B_n(\RP^2)$. The lifting criterion in Proposition \ref{prop:liftcrit} implies that $b$ is liftable and we denote by $\phi$ its lift, which is unique up to deck transformations of $p$. 
    
    We now claim that $[\phi] \in \M(F)\setminus \T(F)$. Indeed, $b|_{Mb}$ lifts to a reflection of the cylinder $\nu(\gamma)$ along its meridional circle. On the other hand, the restriction $b|_{D^2}$ lifts to a map on $F\setminus \text{Int }\nu(\gamma)$ induced by a hyperelliptic involution. This can be seen as follows. When $d=2$, the restriction of $p$ to $F \setminus \text{Int } \nu(\gamma)$ is regular and the unique non-trivial deck transformation is given by a $\pi$-rotation with respect to a symmetry axis. This implies that the lift of $b|_{D^2}$ via $p$ is either the identity or this non-trivial deck transformation. Since the lift of $b|_{\partial Mb}$ exchanges the two components of $\partial \nu(\gamma)$, the lift of $b|_{D^2}$ cannot be the identity. For the case when $d > 2$, it is enough to notice that stabilizing $p|_{F \setminus \text{Int } \nu(\gamma)}$ over $D^2$ does not change the isotopy type of the lift of $b|_{D^2}$. The conclusion then follows from Proposition \ref{prop:stukow}, since it can be easily checked that $\phi_*\colon H_1(F;\R)\rightarrow H_1(F;\R)$ has determinant $-1$.
\end{proof}

Next, we extend this construction to surfaces of odd genus.

\begin{lemma}\label{prop:oddgenus_Y}
    Let $F$ be a closed connected non-orientable surface of odd genus $g(F)\geq 3$. Then, for every odd integer $d \geq 3$, there is a simple $d$-fold branched cover $p \colon F \rightarrow \mathbb{RP}^2$ such that the image of the lifting homomorphism $\lambda_p$ contains an element in $\M(F) \setminus \T(F)$.
\end{lemma}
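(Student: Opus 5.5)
The plan is to mirror the proof of Lemma \ref{prop:evengenus_Y}, now using the odd-degree cover $q\colon F\to\RP^2$ of Example \ref{eg:diagram3}(2), stabilized as in Remark \ref{rmk:stabilizations}(2) to reach any odd degree $d\geq 3$. This cover is the gluing of
\[q_1\colon \Sigma_g^1 \longrightarrow \mathcal D_q \quad\text{and}\quad q_2\colon Mb\longrightarrow Mb,\]
where $q_1$ is a stabilization of the restricted hyperelliptic double cover and $q_2$ is the connected regular $d$-fold cover of the Möbius band. As base map I again take $b$ equal to the reflection of $Mb$ along its core, $b(s,t)=(s,-t)$, which restricts to the antipodal map on $\partial Mb=\partial\mathcal D_q$.

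To extend $b$ over $\mathcal D_q$, I first isotope the branch set so that all stabilizing branch points lie in a collar $C\cong S^1\times I$ of $\partial\mathcal D_q$. The branch points of $q_1$ with monodromy $(1\,2)$ stay in the interior; in particular, the one $(1\,2)$-point whose cut runs to the boundary must sit off the collar, on the symmetry axis. The arrangement of Remark \ref{rmk:stabilizations}(2) is designed for this: the stabilizing points come in antipodal pairs with monodromies $(1\,4),(1\,5)$, then $(1\,6),(1\,7)$, and so on, together with $(1\,3)$. On $C$ I take $b$ to be the antipodal map times the identity of $I$. Inside, I isotope back to the identity and finally use the hyperelliptic-type rotation near the remaining points. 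Concretely, on the inner disk I use a $\pi$-rotation about an axis through the distinguished $(1\,2)$ branch point, preserving the other $(1\,2)$ points setwise; this is the base of the hyperelliptic involution of $\Sigma_g^1$.

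Liftability then follows from Proposition \ref{prop:liftcrit}. I need to check that $\omega_q\circ b_*$ is conjugate to $\omega_q$, which I do generator by generator.
\begin{enumerate}[(i)]
\item The meridians of the $(1\,2)$ points are permuted by the rotation, so their transpositions are unchanged.
\item The stabilizing meridians are exchanged in antipodal pairs. Their labels $(1\,j)$ and $(1\,j')$ are then matched by a global relabeling $\sigma$ of the stabilizing sheets; this $\sigma$ is the deck symmetry of $q_2$ induced by the reflection.
\item The core $\ell$ of $Mb$ is preserved, and reversed in the appropriate sense, so $\omega_q(\ell)$, a $d$-cycle, goes to a conjugate of itself or of its inverse. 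I need $\sigma$ to realize this simultaneously with (ii).
\end{enumerate}

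I expect this last compatibility to be the main obstacle. One must make sure a single $\sigma\in S_d$ conjugates all the stabilizing labels, the $(1\,3)$ label and the cycle $\omega_q(\ell)$ at once. If it fails for the arrangement as given, I would re-place the stabilizing points so that the configuration is symmetric. That change is harmless, because Theorem \ref{thm:gabai_kazez} lets me choose any convenient cover of degree $d$.

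It remains to show that the lift $\phi$ is not in $\T(F)$, which I do via Proposition \ref{prop:stukow}. Over $Mb$, $\phi$ restricts to the reflection of the Möbius band $q_2^{-1}(Mb)$ along its core. Over $\mathcal D_q$, it is the hyperelliptic involution of $\Sigma_g^1$, as in the even case: for $d=2$ the lift is forced to be the deck involution, since it must act as the antipodal map on $\partial\Sigma_g^1$, and stabilizing does not change the isotopy class. On $H_1(F;\R)\cong\R^{2g}$, the hyperelliptic involution acts on the image of $H_1(\Sigma_g)$ as $-\mathrm{id}$, and there are $2g$ such classes, contributing sign $+1$. The remaining class is the core of the Möbius band. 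It is fixed by the reflection, since that map is isotopic to the identity on the core but reverses the $\mathbb{Z}$-coefficient twisting. So the determinant computation needs care. If it turns out to give $+1$, I would instead compose $b$ with a half twist exchanging the boundary-cut $(1\,2)$ point's partner, or simply compose with a reflection fixing $\Sigma_g^1$. Alternatively I would choose the rotation on the inner disk to be a reflection of the disk, lifting to an orientation-reversing involution of $\Sigma_g^1$ whose action on $H_1(\Sigma_g)$ has determinant $(-1)^g$. I would then pick whichever choice yields determinant $-1$, verifying it by explicitly tracking the basis $\alpha_i,\beta_i$ and the core class.
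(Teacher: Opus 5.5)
\textbf{There is a genuine gap: your lift lies in $\T(F)$, so it cannot witness the lemma.} In $F=\Sigma_g^1\cup Mb$ the core $m$ of the Möbius band satisfies $2m=[\partial\Sigma_g^1]=0$. So $m$ is torsion, and $H_1(F;\R)\cong\R^{2g}$ is exactly the image of $H_1(\Sigma_g^1;\R)$. There is no ``remaining class''; your count of $2g$ classes plus the core inside $\R^{2g}$ is already inconsistent. Your lift is orientation-preserving on $\Sigma_g^1$ (the hyperelliptic involution there), so it acts on $H_1(F;\R)$ by $-\mathrm{id}$. Its determinant is $(-1)^{2g}=+1$, and by Proposition~\ref{prop:stukow} it lies in $\T(F)$.

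You can also see this directly. The core reflection $(s,t)\mapsto(s,-t)$ of a Möbius band is isotopic to the identity by sliding once along the core. So your lift is isotopic to a map that is the identity on the Möbius band and an orientation-preserving homeomorphism of $\Sigma_g^1$ fixing $\partial\Sigma_g^1$, hence a product of Dehn twists. This is exactly where the odd case differs from the even one. In $N_{2g+2}=\Sigma_g^2\cup\nu(\gamma)$ the sign $-1$ comes from the extra real class running from one boundary circle of $\Sigma_g^2$ to the other through the annulus, which the lift reverses. $N_{2g+1}$ has no such class.

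\textbf{None of your fallbacks repairs this.}
\begin{itemize}
\item Composing with a half twist multiplies the lift by a Dehn twist, so the coset is unchanged.
\item Anything fixing $\Sigma_g^1$ acts trivially on $H_1(F;\R)$.
\item More generally, every base map preserving $Mb$ lifts to a map preserving the decomposition $\Sigma_g^1\cup Mb$. Its determinant is $+1$ or $(-1)^g$ according to whether it preserves or reverses the orientation of $\Sigma_g^1$. So the reflection variant can only work for $g$ odd, and fails for instance when $g(F)=5$. It would also force you to change the map on $Mb$, since a reflection of $\mathcal D_q$ does not restrict to the antipodal map on the boundary.
\end{itemize}

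\textbf{What is missing} is a lift that does not preserve this decomposition. The paper obtains it by reducing to the even case:
\begin{itemize}
\item Write $F=F'\#\RP^2$ with $g(F')=g(F)-1$ even.
\item Take the $(d-1)$-fold cover of $F'$ from Lemma~\ref{prop:evengenus_Y}, plus one extra sheet mapping $\RP^2$ identically.
\item Connect the two by a tube, by changing the cover over a small disk $S$ fixed pointwise by $b'$, and extend $b'$ by the identity over $S$.
\end{itemize}
Liftability is inherited, and over the extra sheet the lift is a Möbius band map isotopic to the identity rel boundary. The determinant stays $-1$ because $H_1(F;\R)=H_1(F';\R)$.

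\textbf{A secondary problem:} your liftability check is left open, and as set up it cannot succeed. There are $d-2$ stabilizing points, an odd number, so the fixed-point-free antipodal map of the collar cannot permute them.
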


\begin{proof}

    Since $g(F)\geq 3$ is odd, we can write $F=F'\# \mathbb{RP}^2$, where $F'$ is a closed non-orientable surface of even genus $g(F)-1\geq 2$. Given $d\geq 3$ odd, let $p' \colon F' \rightarrow \mathbb{RP}^2$ be the $(d-1)$-fold cover constructed in Lemma \ref{prop:evengenus_Y}. We now take the disjoint union of $p'$ with the identity of $\mathbb{RP}^2$ to get a simple $d$-fold branched cover
    \[p''\colon F' \sqcup \mathbb{RP}^2 \longrightarrow \mathbb{RP}^2,\]
    where $\mathbb{RP}^2$ is contained in the first sheet.
    
    We now consider the simple $d$-fold branched cover
        \[q \colon A\cup( \sqcup_{d-3} D^2) \longrightarrow D^2\]
     whose branch set consists of three branch points with monodromy $(1\,2)$, $(1\,2)$ and $(2\,3)$ respectively. Here $A$ denotes an annulus. Let $S$ be a 2-disk in $\RP^2$ containing exactly one branch point with monodromy $(2\, 3)$ in its interior. Remove $S$ from $\mathbb{RP}^2$, and glue the restriction of $p''$ to the complement of $S$ to $q$. In this way, we get a simple $d$-fold branched cover 
    \[p \colon F \longrightarrow \mathbb{RP}^2.\] It's helpful to observe that $(p'')^{-1}(S)$ contains one disk $\widetilde S_1\subset \RP^2$ and $d-2$ disks in $F'$, one of which is fixed setwise by the hyperelliptic involution on $F'$ and will be denoted $\widetilde S_2$. The total space of $p$ is obtained by connect-summing $F'$ and $\RP^2$ along $\widetilde S_1$ and $\widetilde S_2$, and leaving the remainder of $p''^{-1}(S)$ unchanged.
    
    Let $b'$ be the self-diffeomorphism of $\RP^2$ defined as in the proof of Lemma \ref{prop:evengenus_Y}. In particular, the lift of $b'$ via $p'$ is an element in $\M(F')\setminus \T(F')$. Let $b$ be the self-diffeomorphism defined by gluing the restriction of $b'$ over the complement of the disk $S$ with the identity map. By the lifting criterion in Proposition \ref{prop:liftcrit}, $b$ is liftable; let $\phi$ be its lift. To conclude the proof, we argue that $[\phi]\in \M(F)\setminus \T(F)$. The part of $\phi$ supported in $\RP^2\setminus \widetilde S_1$ is necessarily isotopic to the identity rel. boundary, since $\M(Mb,\partial Mb)$ is trivial \cite{epstein}. The part of $\phi$ supported in $F'\setminus \widetilde S_2$ is isotopic to the lift constructed in Lemma \ref{prop:evengenus_Y}, restricted to $F'\setminus \widetilde S_2$. As in the even genus case, one can then argue that the induced map $\phi_*$ has determinant $-1$, and the conclusion again follows from Proposition \ref{prop:stukow}.
\end{proof}

\begin{proof}[Proof of Theorem \ref{thm:1}]
    If $F\cong\RP^2$ then $\M(F)$ is trivial and there is nothing to prove, so assume $g(F)\geq 2$. By Lemmas \ref{prop:evengenus_Y} and \ref{prop:oddgenus_Y}, for every integer $d \geq 2$ with the same parity as $\chi(F)$, there exists a simple $d$-fold branched cover
    \[p \colon F \longrightarrow \mathbb{RP}^2\] such that the image of $\lambda_p$ contains at least one element in $\M(F) \setminus \T(F)$. By Corollary \ref{cor:lift}, when $d \geq 5$ the image of $\lambda_p$ contains the whole Dehn twist subgroup as well. Since $\T(F)$ has index two in $\M(F)$, it follows that if $d \geq 5$ then $\lambda_p$ is surjective. If $q \colon F \rightarrow \mathbb{RP}^2$ is any other simple $d$-fold branched cover of the same degree, then $q$ is equivalent to $p$ by Theorem \ref{thm:gabai_kazez}. The conclusion follows from Lemma \ref{lem:lifting}.
\end{proof}

\begin{remark}\label{rmk:or}
A slight extension of Theorem \ref{thm:1} to closed \textsl{orientable} surfaces is the following. If $F$ is a closed connected orientable surface, then for any simple $d$-fold branched cover $p \colon F \rightarrow \RP^2$, $d$ is necessarily even and there exists a commutative diagram
\begin{center}
\begin{tikzcd}
F \arrow[r, "\widetilde p"] \arrow[rd, "p"'] & S^2 \arrow[d, "u"] \\
                                             & \RP^2,             
\end{tikzcd}
\end{center}
where $\widetilde p \colon F \rightarrow S^2$ is a simple $d/2$-fold branched cover and $u \colon S^2 \rightarrow \RP^2$ is the universal covering map of $\RP^2$. This implies that a self-diffeomorphism of $\RP^2$ is liftable with respect to $p$ if and only if its lift with respect to $u$ is liftable with respect to $\widetilde p$.
\end{remark}

\section{Dehn twists as lifts of half twists}\label{sec:9}
In this section, we establish some technical results that will be needed in the proof of Theorem \ref{thm:3}. In particular, given a closed non-orientable surface $F$ and a simple $2$-sided loop $\gamma \subset F$, we show that we can always find a simple branched cover $p \colon F \rightarrow \mathbb{RP}^2$ such that the Dehn twist $t_{\gamma}$ is isotopic to the lift of a half twist. We also give bounds on the degree of $p$. There are three cases to consider: $\gamma$ separates $F$ and bounds an orientable surface, $\gamma$ separates $F$ into two non-orientable surfaces, and $\gamma$ is non-separating. We treat each of these three cases in the following three propositions.

\begin{proposition}\label{prop:sep1}
    Let $F$ be a closed connected non-orientable surface and $\gamma \subset F$ a simple $2$-sided loop such that 
    \[F=\Sigma^1_{g_1} \cup_{\gamma} N^1_{g_2},\]
    where $\Sigma^1_{g_1}$ (resp. $N^1_{g_2}$) is an orientable (resp. non-orientable) surface of genus $g_1$ (resp. $g_2$) and one boundary component. Then, for every integer $d \geq 6$ such that $d \equiv \chi(F) \pmod{2}$, there exists a simple $d$-fold branched cover
    \[p \colon F \longrightarrow \mathbb{RP}^2\]
    such that the Dehn twist $t_{\gamma}$ is the lift of a half twist around a simple arc $a \subset \mathbb{RP}^2$.
\end{proposition}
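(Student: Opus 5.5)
The plan is to build the cover $p$ by hand, adapted to the splitting $F=\Sigma^1_{g_1}\cup_\gamma N^1_{g_2}$, so that $p(\gamma)$ is an embedded circle in $\RP^2$ bounding a disk with exactly two branch points carrying the same transposition. Remark \ref{rmk:ht_dt} then gives $t_\gamma=\lambda_p(h_a)^{\pm1}$, where $a$ is an arc joining those two points. The degree requirement will be met afterwards by stabilizing away from $p(\gamma)$ (Proposition \ref{prop:stab} and Remark \ref{rmk:stab_curves}), and Theorem \ref{thm:gabai_kazez} is not needed because we produce a specific $p$ in each degree.

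\textbf{The orientable side.} Take the hyperelliptic double cover $\Sigma_{g_1}\to S^2$ of Example \ref{eg:diagram1} and remove a disk over which the cover is a single annulus, i.e.\ a disk containing exactly two branch points with monodromy $(1\,2)$. The complement $\Sigma^1_{g_1}$ then double covers a disk $E\subset S^2$ containing $2g_1$ branch points, and $\partial\Sigma^1_{g_1}=\gamma$ maps $2{:}1$ onto $\partial E$. This is not yet good enough: we want $p|_\gamma$ injective, so that $p(\gamma)$ is simple and $\gamma$ lies on a single sheet of the ambient cover. So we stabilize once, adding a branch point with monodromy $(2\,3)$ in $E$ with its cut running to $\partial E$. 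This gives a $3$-fold cover $\Sigma^1_{g_1}\sqcup D^2\to E$ whose boundary monodromy is a $3$-cycle, so $\partial E$ has a connected lift $\gamma$ mapping $3{:}1$. Alternatively, and more cleanly, we work in the complement: use the half of the hyperelliptic picture in which $\partial E$ bounds, on the \emph{other} side, a disk $D_0$ with exactly two branch points of monodromy $(1\,2)$. Then $p^{-1}(D_0)$ is an annulus with core $\gamma$, together with trivial disks.

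\textbf{Assembling the cover.} With the second viewpoint, we construct $p$ so that the disk $D_0\subset\RP^2$ contains exactly two branch points labelled $(1\,2)$, and $\RP^2\setminus D_0$ (a Möbius band) carries a branched cover whose restriction to the sheets $1,2$ component is $N^1_{g_2}$ glued to $\Sigma^1_{g_1}$ suitably. Concretely, I would take $\RP^2=D_0\cup(D'\setminus\text{disk})\cup Mb$. Over the annulus between $D_0$ and the Möbius band, place the $2g_1$ branch points of the $(1\,2)$ hyperelliptic picture; this yields $\Sigma_{g_1}$ minus two disks on sheets $1,2$. One boundary component is capped by the annulus over $D_0$, after which $\gamma$ becomes the lift of $\partial D_0$ in $\Sigma^1_{g_1}$. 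The other boundary circle lies over a circle $c$ parallel to $\partial Mb$. Outside $c$ we then insert a copy of the construction of Example \ref{eg:diagram3} and Remark \ref{rmk:stabilizations} for $N_{g_2}$, connect-summed along a disk. In the base this is a "disk sum" of labelled diagrams, in the manner of the proof of Lemma \ref{prop:oddgenus_Y}: $N^1_{g_2}$ arises as $N_{g_2}$ minus a lift of a small disk lying in sheet $1$ or $2$. Here $N_{g_2}$ is realized by a cover of degree $\ge5$ of the right parity, via the covers $p$ or $q$ and their stabilizations. The total space is then $\Sigma^1_{g_1}\cup_\gamma N^1_{g_2}=F$, and the two branch points in $D_0$ together with $p(\gamma)=\partial D_0$ give the required half twist.

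\textbf{Main obstacle.} The main obstacle is the bookkeeping at the gluing circle. We must check that the monodromies of the two pieces agree on the separating circle, so that the glued cover exists. We must also check that the lift of $\partial D_0$ through the chosen sheets is exactly $\gamma$, with the other lifts being disks, and that the resulting surface is $F$ rather than a different splitting. I would first try to realize the gluing as a connected sum of covers over a branch-point-free disk in sheet $1$, which forces the boundary monodromy there to be trivial and makes the gluing automatic. The degree then works out as $d=\deg(\text{piece for }N_{g_2})+\deg(\text{piece for }\Sigma_{g_1})-1$, up to stabilizations; this would be adjusted to any prescribed $d\ge6$ of the parity of $\chi(F)$ using Proposition \ref{prop:stab}, performed away from $D_0$.
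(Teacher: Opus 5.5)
\textbf{There is a genuine gap.} Your target is the right one: a cover in which $p|_\gamma$ is injective and $p(\gamma)=\partial D_0$ bounds a disk containing exactly two branch points with the same transposition, followed by Remark \ref{rmk:ht_dt} and stabilization away from $D_0$. But the cover you build does not have this property. The lift of $\partial D_0$ you obtain is non-separating, whereas $\gamma$ separates $F$.

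In your assembly, the sheets $1,2$ that meet at the two points of $D_0$ are also joined by the $2g_1$ branch points of monodromy $(1\,2)$ that you place in $D'\setminus D_0$. So $p^{-1}(D'\setminus \Int D_0)$ contains a path between the two boundary circles of the annulus over $D_0$ that avoids its core. That core is therefore non-separating in $F$ and cannot be $\gamma$.

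Your ``orientable side'' paragraph already has this defect. For $g_1\geq 1$, the preimage of a disk with two branch points in the hyperelliptic cover is an annulus around a non-separating curve. Its complement is a double cover of a disk branched at $2g_1$ points, hence connected of genus $g_1-1$ with two boundary circles, not $\Sigma^1_{g_1}$. With $2g_1+1$ points you do get $\Sigma^1_{g_1}$, but its boundary maps $2{:}1$ (and $3{:}1$ after your $(2\,3)$-stabilization), so injectivity fails.

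The gluing step is also not a valid operation. Removing a single lift of a branch-point-free disk from each total space does not leave a branched cover of anything. This is why your degree count (sum of the two degrees minus one) matches no actual construction.

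The missing idea is dictated by the separation of $\gamma$. If $p^{-1}(D_0)$ is an annulus with core $\gamma$ plus $d-2$ disks, then $F\setminus p^{-1}(\Int D_0)$ is disconnected. So the two sheets meeting in $D_0$ must lie in \emph{different} components of the cover over the M\"obius band $\RP^2\setminus\Int D_0$, not in a single ``sheets $1,2$ component'' as you ask. Each component surjects onto the M\"obius band, so the one containing $\Sigma^1_{g_1}$ cannot be confined over a disk $D'$. Being orientable over a non-orientable base, it also has even degree.

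Capping off, this means you must start from a \emph{disconnected} cover $\Sigma_{g_1}\sqcup N_{g_2}\to\RP^2$. For example, take a simple $4$-fold cover of $\Sigma_{g_1}$ (which factors through $S^2$ as in Remark \ref{rmk:or}) together with a $2$- or $3$-fold cover of $N_{g_2}$. Then, in a disk free of branch points, add two branch points with a transposition $(i\,j)$, where $i$ is a sheet of the first component and $j$ a sheet of the second. This tubes the two surfaces together into $F$, and the core of the tube is $\gamma$. It maps injectively onto a circle enclosing exactly the two new points. The degree is $6$ or $7$ according to the parity of $\chi(F)$, and Proposition \ref{prop:stab} raises it away from that disk. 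This is the paper's proof.
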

\begin{proof}
Consider the closed surfaces
\[\Sigma_{g_1}=\Sigma^1_{g_1}\cup D^2 \quad \text{and} \quad N_{g_2}=N^1_{g_2}\cup D^2.\]

Notice that
\[\chi(N_{g_2})\equiv \chi(F) \pmod{2}.\]

There exist simple branched covers
\[p_1 \colon \Sigma_{g_1} \xrightarrow{4:1} \mathbb{RP}^2 \quad \text{and} \quad p_2\colon N_{g_2} \xrightarrow{d_2:1} \mathbb{RP}^2\]
where $d_2=2$ if $\chi(F)$ is even and $d_2=3$ if $\chi(F)$ is odd (see for example \cite[Lemma 1.7]{bais2025branched}). Then the branched cover
\[p_1 \sqcup p_2 \colon \Sigma_{g_1} \sqcup N_{g_2} \xrightarrow{d:1} \mathbb{RP}^2\]
has degree $d=6$ if $\chi(F)$ is even and $d=7$ if $\chi(F)$ is odd.

We can assume without loss of generality that $B_{p_1}$ and $B_{p_2}$ are disjoint, so that $p_1 \sqcup p_2$ is simple and $B_{p_1\sqcup p_2}=B_{p_1}\sqcup B_{p_2}$. We number the sheets of $p_1 \sqcup p_2$ so that the first $4$ sheets belong to $p_1$. Now let $D^2 \subset \mathbb{RP}^2$ be an embedded $2$-disk which is disjoint from $B_{p_1 \sqcup p_2}$ and let
\[q \colon \sqcup_{d-2} D^2 \sqcup S^1 \times I \xrightarrow{d:1} D^2\]
be a simple $d$-fold branched cover with two disjoint branch points whose monodromy is $(4 \; 5)$. In particular, $q$ is a homeomorphism over the disjoint union $\sqcup_{d-2} D^2$, while its restriction to $S^1 \times I$ is $2$-fold. We then glue the restriction of $p_1 \sqcup p_2$ over $\mathbb{RP}^2\setminus \text{Int } D^2$ with $q$ to get a simple $d$-fold branched cover
\[p \colon  F \cong \Sigma_{g_1}\# N_{g_2} \xrightarrow{d:1} \mathbb{RP}^2.\]
The conclusion follows from noticing that the Dehn twist $t_{\gamma}$ is the lift of a half twist around a simple arc connecting the two branch points of $q$ in $D^2$ (see Remark \ref{rmk:ht_dt}). This proves the statement for $d=6,7$. One can then arbitrarily increase the degree using Proposition \ref{prop:stab}.

\end{proof}

\begin{proposition}\label{prop:sep2}
    Let $F$ be a closed connected non-orientable surface and $\gamma \subset F$ a simple $2$-sided loop such that
    \[F=N^1_{g_1}\cup_{\gamma}N^1_{g_2},\]
    where $N^1_{g_i}$ is a non-orientable surface of genus $g_i$ and one boundary component for $i=1,2$. Then, for every $d \geq 5$ such that $d \equiv \chi(F) \pmod{2}$, there exists a simple $d$-fold branched cover
    \[p \colon F \longrightarrow \mathbb{RP}^2\]
    such that the Dehn twist $t_{\gamma}$ is the lift of a half twist around a simple arc $a \subset \mathbb{RP}^2$.
\end{proposition}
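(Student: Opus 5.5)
The plan is to imitate the proof of Proposition \ref{prop:sep1}, capping off the two sides of $\gamma$ and connect-summing two covers of $\RP^2$ along a tube over a disk. Consider the closed non-orientable surfaces
\[N_{g_1}=N^1_{g_1}\cup D^2 \quad\text{and}\quad N_{g_2}=N^1_{g_2}\cup D^2,\]
so that $F\cong N_{g_1}\# N_{g_2}$, with $\gamma$ the belt circle of the connected sum. Since $\chi(F)=\chi(N_{g_1})+\chi(N_{g_2})-2$, the parities satisfy $\chi(F)\equiv \chi(N_{g_1})+\chi(N_{g_2}) \pmod 2$. For $i=1,2$ I would choose a simple branched cover $p_i\colon N_{g_i}\to \RP^2$ of minimal degree $d_i\in\{2,3\}$ with $d_i\equiv \chi(N_{g_i})\pmod 2$. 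This uses \cite[Lemma 1.7]{bais2025branched}, or directly the double cover of Example \ref{eg:diagram3}(1) before stabilization and the $3$-fold cover $q$ of Example \ref{eg:diagram3}(2). Then $d_1+d_2\in\{4,5,6\}$ and $d_1+d_2\equiv\chi(F)\pmod 2$.

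Next, after a small isotopy making $B_{p_1}\cap B_{p_2}=\emptyset$, the disjoint union $p_1\sqcup p_2$ is a simple $(d_1+d_2)$-fold cover. I number its sheets so that sheet $d_1$ belongs to $p_1$ and sheet $d_1+1$ belongs to $p_2$. I remove a disk $D^2\subset\RP^2$ disjoint from the branch set and glue in the local cover $q\colon \sqcup_{d-2}D^2\sqcup S^1\times I\to D^2$, which has two branch points with monodromy $(d_1\; d_1+1)$. This requires checking that the boundary monodromy of $q$ is trivial, which holds because the two transpositions multiply to the identity. The gluing is therefore well defined and produces a simple cover $p\colon N_{g_1}\# N_{g_2}\cong F\to\RP^2$. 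The annulus $S^1\times I$ is exactly the connecting tube, so its core is isotopic to $\gamma$. The preimage of a disk around an arc $a$ joining the two new branch points is the union of $d-2$ disks and this annulus, so Remark \ref{rmk:ht_dt} gives $t_\gamma=\lambda_p(h_a)^{\pm1}$. One should also check that the identification $F\cong N^1_{g_1}\cup_\gamma N^1_{g_2}$ matches, but this is automatic: a $2$-sided curve splitting $F$ into two one-holed non-orientable pieces of given genera is unique up to diffeomorphism, and conjugating by that diffeomorphism only changes $p$ by an equivalence.

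This yields the statement for the degree $d_0=d_1+d_2$. To reach every $d\ge 5$ of the correct parity, I would stabilize using Proposition \ref{prop:stab}, which adds two branch points and raises the degree by two. I would place the new branch points and their cuts away from the disk containing the arc $a$. The preimage of that disk is then still $d-2$ disks plus the annulus with core $\gamma$, so the half twist $h_a$ still lifts to $t_\gamma$.

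The main obstacle is the low-degree case. When both $\chi(N_{g_i})$ are even, we get $d_0=4<5$, and this case is harmless because one stabilization gives degree $6$. However, degree $5$ requires $d_1+d_2=5$, meaning $\{d_1,d_2\}=\{2,3\}$, and this happens exactly when $\chi(F)$ is odd, so it is available. The case $d_0=6$ arises only when both $d_i=3$, and then $\chi(F)$ is even, so $d=5$ is not required there. The remaining check is that the minimal-degree covers exist for all genera, including $g_i=1$: the identity map $\RP^2\to\RP^2$ has degree $1$, which has the right parity, so for $g_i=1$ I would instead take $d_i=1$. For example, with $d_1=1$ and $d_2\in\{2,3\}$, the degree-$d_0$ cover is then stabilized up to the needed degree.
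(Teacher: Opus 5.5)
Your proposal is correct and follows essentially the same route as the paper: cap off both pieces, take simple covers $p_i\colon N_{g_i}\to\RP^2$ of degree $d_i\in\{2,3\}$ matching the parity of $\chi(N_{g_i})$, join a sheet of $p_1$ to a sheet of $p_2$ by gluing in the local cover with two branch points labeled by the same transposition, apply Remark \ref{rmk:ht_dt}, and raise the degree via Proposition \ref{prop:stab}. Your extra parity bookkeeping and your check that $\gamma$ is the core of the connecting tube are fine; the switch to $d_i=1$ when $g_i=1$ is harmless but unnecessary, since a simple $3$-fold cover $\RP^2\to\RP^2$ already exists (e.g.\ Example \ref{eg:diagram3}(2) with $g=0$).
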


\begin{proof}
    Consider the closed surface
\[N_{g_i}=N^1_{g_i}\cup D^2\] for $i=1,2$. There exist simple branched covers
\[p_1 \colon N_{g_1} \xrightarrow{d_1:1} \mathbb{RP}^2 \quad \text{and} \quad p_2\colon N_{g_2} \xrightarrow{d_2:1} \mathbb{RP}^2\]
where $d_i=2$ if $\chi(N_{g_i})$ is even and $d_i=3$ if $\chi(N_{g_i})$ is odd for $i=1,2$ (see for example \cite[Lemma 1.7]{bais2025branched}). Then the branched cover
\[p_1 \sqcup p_2 \colon N_{g_1} \sqcup N_{g_2} \xrightarrow{d:1} \mathbb{RP}^2\]
has degree $d=4$ if both $g_1$ and $g_2$ are even, $d=5$ if $g_1$ is even and $g_2$ is odd or vice versa, and $d=6$ if both $g_1$ and $g_2$ are odd. The remaining part of the proof follows that of Proposition \ref{prop:sep1}. At the end, we get a simple $d$-fold branched cover
\[p \colon  F \cong N_{g_1}\# N_{g_2} \xrightarrow{d:1} \mathbb{RP}^2\]
where the Dehn twist $t_{\gamma}$ is the lift of a half twist and the degree can be increased using Proposition \ref{prop:stab}.
\end{proof}

\begin{proposition}\label{prop:nonsep}
Let $F$ be a closed connected non-orientable surface and $\gamma \subset F$ a simple $2$-sided non-separating loop. Then for every $d \geq 5$ such that $d \equiv \chi(F) \pmod{2}$, there exists a simple $d$-fold branched cover $p \colon F \rightarrow \mathbb{RP}^2$ such that the Dehn twist $t_{\gamma}$ is the lift of a half twist around a simple arc $a \subset \mathbb{RP}^2$.    
\end{proposition}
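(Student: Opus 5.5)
Since $\gamma$ is $2$-sided and non-separating, I would reduce to a standard model via the classification of surfaces and then reuse the machinery of Section \ref{sec:lift_chill}. If $F\setminus \nu(\gamma)$ is non-orientable, then $\gamma$ is equivalent, under a diffeomorphism of $F$, to a standard non-separating $2$-sided curve. Concretely, cutting along $\gamma$ gives a connected surface with two boundary components, and the diffeomorphism type of the pair $(F,\gamma)$ is determined by the orientability and genus of $F\setminus\nu(\gamma)$ together with how the boundary components are reglued. In the even genus model of Example \ref{eg:diagram3}(1), one such standard curve is a curve $\alpha_i$ from Theorem \ref{thm:chillingworth_69}, or one of $\eta,\psi,\chi$. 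In the odd genus model of Example \ref{eg:diagram3}(2), it is an $\alpha_i$, or $\eta$ or $\psi$. The case where $F\setminus\nu(\gamma)$ is orientable occurs only for even genus, and there $\gamma$ is a model curve like $\chi$ or $\eta$ in Figure \ref{fig:epc}, i.e. one crossing the Möbius part. So the first step is a short case analysis: up to a diffeomorphism $\Phi$ of $F$, $\gamma$ is one of the curves whose Dehn twist was shown, in Propositions \ref{prop:even_twist_lift} and \ref{prop:odd_twist_lift}, to be the lift of a half twist $h_a$ for the stabilized covers $p$ (degree $6$) or $q$ (degree $5$).

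Given this, let $p_0$ be the relevant stabilized model cover and $\delta=\Phi(\gamma)$ the model curve, with $t_\delta=\lambda_{p_0}(h_a)^{\pm1}$. I would then define $p=p_0\circ\Phi$. This is a simple branched cover of the same degree with the same branch set, and the lift of $h_a$ with respect to $p$ is $\Phi^{-1}t_\delta^{\pm1}\Phi=t_\gamma^{\pm1}$. This is the naturality in Lemma \ref{lem:lifting}, applied with $\phi=\id$, and it relies on uniqueness of lifts for $d\ge3$. If needed, replacing $h_a$ by $h_a^{-1}$ fixes the sign.

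To reach all $d\geq5$ of the right parity, I would stabilize via Proposition \ref{prop:stab}, adding the new branch points away from the disk $D$ bounded by $p(\gamma)$ that contains the two branch points of $a$. By Remark \ref{rmk:ht_dt}, the preimage of $D$ is still $d-2$ disks plus the annulus around $\gamma$, so $t_\gamma$ remains a lift of a half twist around $a$. The odd case then starts at $5$ and the even case at $6$, as required.

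The main obstacle is the first step. One must check that every topological type of non-separating $2$-sided curve is represented among the curves already treated, and in particular that the type with orientable complement is realized by $\eta$ or $\chi$ in even genus. If some type is missing, I would handle it directly. Cut $F$ along $\gamma$ to get a surface with two boundary circles, and build the cover as in Proposition \ref{prop:sep1}: take a cover of the capped-off surface, remove two discs whose lifts lie over a common disc in two different sheets $i,j$, and glue in the $d$-fold cover of a disc with two branch points of monodromy $(i\,j)$. This produces a tube whose core is $\gamma$, and Remark \ref{rmk:ht_dt} applies. The degree count here needs care, since the resulting cover must remain connected and achieve $d=5$.
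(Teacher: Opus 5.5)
Your proposal is correct and follows the paper's proof. Both classify $2$-sided non-separating curves by the topology of their complement: there is a single type when $\chi(F)$ is odd or $g(F)=2$, and two types when $g(F)\geq 4$ is even, the orientable-complement type being the class with $[\gamma]=\PD(w_1(F))$ (represented, for instance, by the core of the annulus lying over the Möbius band); both then observe that each type already occurs among the curves treated in the proof of Theorem \ref{thm:chil}, while your transport $p=p_0\circ\Phi$ and stabilization away from the disk bounded by $p(\gamma)$ simply spell out steps the paper leaves implicit.
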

\begin{proof}
The equivalence class of $\gamma\subset F$ is determined by the homeomorphism type of its complement. It follows that, when $\gamma$ is $2$-sided and non-separating, there is one such equivalence class if $\chi(F)$ is odd or if $g(F)=2$, and two equivalence classes if $g(F)$ is even and $g(F)\geq 4$, depending on whether or not $[\gamma]=\PD(w_1(F))$. In all cases, the fact that $t_\gamma$ is the lift of a half twist via some simple branched cover for any allowable degree $d \geq 5$ was shown to be true in the proof of Theorem \ref{thm:chil}.
\end{proof}

Combining these results with Theorem \ref{thm:1}, we deduce the following.

\begin{corollary}\label{thm:lift}
Let $N_g$ be a closed connected non-orientable surface of genus $g$ and let $p \colon N_g \rightarrow \mathbb{RP}^2$ be a simple $d$-fold branched cover, where $d \geq 6$. Then any simple $2$-sided loop $\gamma \subset N_g$ is isotopic to a simple curve $\gamma'$ such that the Dehn twist $t_{\gamma'}$ is the lift of a half twist around a simple arc $a \subset \mathbb{RP}^2$.

\end{corollary}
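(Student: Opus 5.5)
The plan is to reduce to the three model situations of Propositions \ref{prop:sep1}, \ref{prop:sep2} and \ref{prop:nonsep}, and then transport the conclusion to the given cover $p$. We use the equivalence given by Theorem \ref{thm:gabai_kazez} and Lemma \ref{lem:lifting}.

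First fix a simple $2$-sided loop $\gamma\subset N_g$. We may assume $\gamma$ is essential. Otherwise $\gamma$ bounds a disk and $t_\gamma$ is trivial; it is then isotopic to a small loop around an arc joining two branch points with equal monodromy, if such exist, or it can be handled directly. So we split according to the topology of $N_g\setminus\gamma$.

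\begin{itemize}
\item If $\gamma$ separates off an orientable piece, we are in the setting of Proposition \ref{prop:sep1}.
\item If $\gamma$ separates $N_g$ into two non-orientable pieces, we are in the setting of Proposition \ref{prop:sep2}.
\item If $\gamma$ is non-separating, we are in the setting of Proposition \ref{prop:nonsep}.
\end{itemize}

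In each case the corresponding proposition gives a simple branched cover $p'\colon N_g\to\RP^2$ of degree $d$ and a simple arc $a'\subset\RP^2$ with $t_\gamma=\lambda_{p'}(h_{a'})^{\pm1}$. Recall that $d\equiv\chi(N_g)\pmod 2$ holds automatically by the parity remark in Section \ref{sec:2}, and $d\ge 6$ covers the degree hypotheses of all three propositions. (Proposition \ref{prop:sep1} is the only one needing $d\geq 6$, which is why the corollary assumes it.) By Remark \ref{rmk:ht_dt}, $p'(\gamma)$ bounds a disk containing exactly the two endpoints of $a'$, and $\gamma$ is the unique closed component of $p'^{-1}(a')$.

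Next, Theorem \ref{thm:gabai_kazez} gives diffeomorphisms $\Phi\colon N_g\to N_g$ and $\phi\colon\RP^2\to\RP^2$ with $p\circ\Phi=\phi\circ p'$. Set $a=\phi(a')$ and $\gamma''=\Phi(\gamma)$. Then $p^{-1}(a)=\Phi(p'^{-1}(a'))$ contains the simple closed loop $\gamma''$. By Proposition \ref{prop:liftcrit} and Lemma \ref{lem:lifting}, we get $\lambda_p(h_a)=\Phi\,\lambda_{p'}(h_{a'})\,\Phi^{-1}=t_{\gamma''}^{\pm1}$. The remaining issue is that $\gamma''$ need not be isotopic to $\gamma$: it is only in the same $\M(N_g)$-orbit.

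This is where Theorem \ref{thm:1} enters, and it is the step needing care. Since $d\ge 6\geq 5$, $\lambda_p$ is surjective. Choose a liftable braid $\beta\in\L_p$, represented by a diffeomorphism $b$ of $(\RP^2,B_p)$, with $\lambda_p(\beta)=[\Phi]^{-1}$. Let $\tilde b$ be the lift of $b$, so $\tilde b$ is isotopic to $\Phi^{-1}$. Set $a''=b(a)$ and $\gamma'=\tilde b(\gamma'')$. Because $p\circ\tilde b=b\circ p$, the preimage $p^{-1}(a'')$ contains the simple closed loop $\gamma'$. Moreover $\gamma'$ is isotopic to $\Phi^{-1}(\gamma'')=\gamma$.

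Finally, Proposition \ref{prop:liftcrit} gives $\lambda_p(h_{a''})=t_{\gamma'}^{\pm1}$. Equivalently, $h_{a''}=b\,h_a\,b^{-1}$, which lifts to $\tilde b\,t_{\gamma''}^{\pm1}\tilde b^{-1}=t_{\gamma'}^{\pm1}$. If the sign is wrong, we replace the half twist by its inverse, which is again a half twist around $a''$ in the opposite direction. This completes the argument.

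The main obstacle is bookkeeping rather than new geometry. One must check that the case division is exhaustive. The cases are: the complement of $\gamma$ has an orientable component (Proposition \ref{prop:sep1}), two non-orientable components (Proposition \ref{prop:sep2}), or is connected (Proposition \ref{prop:nonsep}). One must also check that the three propositions together produce covers of every degree $d\ge 6$ of the right parity. The transport via Theorem \ref{thm:1} is then formal.
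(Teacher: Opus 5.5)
Your proposal is correct and follows the paper's proof. You obtain a model cover from Propositions \ref{prop:sep1}, \ref{prop:sep2} and \ref{prop:nonsep}, transport it to $p$ via Theorem \ref{thm:gabai_kazez}, and use the surjectivity of $\lambda_p$ from Theorem \ref{thm:1} to replace the conjugating diffeomorphism $\Phi$ by the lift of a braid; tracking the curve $\gamma'=\tilde b(\Phi(\gamma))$ explicitly gives the isotopy statement more cleanly than the paper's mapping-class computation. Your aside on inessential $\gamma$ is muddled but not needed, since a disk is an orientable piece and that case falls under Proposition \ref{prop:sep1} with $g_1=0$.
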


\begin{proof}
    By Propositions \ref{prop:sep1}, \ref{prop:sep2} and \ref{prop:nonsep}, there exists a simple $d$-fold branched cover $q \colon N_g \rightarrow \mathbb{RP}^2$ such that $t_{\gamma}$ is (up to isotopy) the lift of a half twist $h_a$. By Theorem \ref{thm:gabai_kazez}, $p$ is equivalent to such cover. In particular, there exists a commutative diagram
    \begin{center}
\begin{tikzcd}
N_g \arrow[d, "q"'] \arrow[r, "\widetilde f"] & N_g \arrow[d, "p"] \\
\mathbb{RP}^2 \arrow[r, "f"']                 & \mathbb{RP}^2,     
\end{tikzcd}
    \end{center}
where $f$ and $\widetilde f$ are diffeomorphisms. Moreover, there is a commutative square
    \begin{center}
        %
\begin{tikzcd}
N_g \arrow[d, "q"'] \arrow[r, "t_{\gamma}"] & N_g \arrow[d, "q"] \\
\mathbb{RP}^2 \arrow[r, "h_a"']             & \mathbb{RP}^2.     
\end{tikzcd}
    \end{center}
    The commutative diagrams imply that
    \[\lambda_p([f^{-1}\circ h_a \circ f])=\lambda_p(h_{f^{-1}(a)})=[\widetilde f^{-1} \circ t_{\gamma} \circ \widetilde f].\]
    This means that
    \begin{equation*}[t_{\gamma}]=\widetilde f \circ \lambda_p(h_{f^{-1}(a)})\circ \widetilde f^{-1}\label{eq:s5_1}\end{equation*}
    so $t_{\gamma}$ is conjugate to the lift of a half twist. Now, Theorem \ref{thm:1} implies that there exists a braid $b$ over the branch points of $p$ such that $\lambda_p(b)= \widetilde f$. In particular, we have that
    \[t_{\gamma}=\lambda_p(b)\circ \lambda_p(h_{f^{-1}(a)})\circ \lambda_p(b)^{-1}=\lambda_p(h_{b \circ f^{-1}(a)}),\]
    i.e. $t_{\gamma}$ is the lift of a half twist.
\end{proof}

\section{Braided surfaces}\label{sec:6}

\begin{definition}[Braided surface]\label{def:brsurf} Let $B$ and $C$ be two connected compact surfaces. A properly embedded compact surface $S\subset B \times C$ is \textsl{braided over $B$} if the restriction of the canonical projection
\[\pi_B|_S \colon S \longrightarrow B\]
is a simple branched cover.

Notice that $\pi_B|_S$ has a finite number of critical values $b_1, \dots, b_n \in \text{Int } B$ and over each critical value $b_i$ there is exactly one critical point $s_i$, because $\pi_B|_S$ is simple. We call $s_1, \dots, s_n \in S$ the \textsl{twist points} of the braided surface. More detailed exposition about braided surfaces and their relation to Lefschetz fibrations can be found in \cite{Apostolakis_2013, LoiPiergallini}.    
\end{definition}
A braided surface $ S \subset B \times C$ is determined up to vertical isotopy by its \textsl{braid monodromy}, whose definition we now recall.
\begin{definition}
    Let $S \subset B \times C$ be a surface braided over $B$ and let $B_q =\{b_1, \dots, b_n\}\subset \Int B$ be the branch set of the $d$-fold cover $q=\pi_B|_S$. The \textsl{braid monodromy} of $S$ is the representation 
    \[\omega_S \colon \pi_1(B \setminus B_q) \longrightarrow \B_d(C)  \]
    defined as follows. Let $*\in B \setminus B_q$ be a base point for $\pi_1(B \setminus B_q)$ and let $\pi_B|_S^{-1}(*)=\widetilde *=\{*_1, \dots, *_d\}\subset \{*\}\times C$ be the corresponding base point for the configuration space $\Gamma_d(C)$ of $d$-tuples of points in $\{*\}\times C\cong C$. For any $[\gamma]\in \pi_1(B \setminus B_q)$, we set
    \[\omega_S([\gamma])=[\widetilde \gamma]\in \pi_1(\Gamma_d(C))\cong \B_d(C), \]
    where $\widetilde \gamma(t)=q^{-1}(\gamma(t))\subset \{\gamma(t)\}\times C \cong C$ for every $t \in [0,1]$. 
\end{definition}

\section{Lefschetz fibrations}\label{sec:7}

\begin{definition}[Lefschetz fibration]\label{def:lef}
    Let $X$ be a compact connected smooth 4-manifold and let $B$
be a compact connected smooth surface. A \textsl{singular fibration} is a smooth map 
\[f \colon X \longrightarrow B \]
which is a submersion away from finitely many points of $\Int X$ whose images lie in $\Int B$, such that each fiber contains at most one critical point and $f$ is a surface bundle over the complement of the critical image $C_f \subset \Int B$. 

We say that a singular fibration $f\colon X \rightarrow B$ is a \textsl{Lefschetz fibration} if at each critical point $c \in X$ it is locally modeled by the complex non-singular quadratic form \begin{equation}\label{eq:local_model}
    (z_1, z_2) \longmapsto z_1z_2.
\end{equation}
\end{definition}

In the present paper we always assume that the regular fiber of a Lefschetz fibration is a closed connected non-orientable surface. In the oriented setting, one usually requires that the local complex charts yielding (\ref{eq:local_model}) are orientation-preserving. In our setting, this condition does not make sense, since we  work with non-orientable $4$-manifolds.

We now recall the notion of monodromy for a singular fibration $f \colon X \rightarrow B$. In the following, we fix a base point $*\in B \setminus C_f$, which lies in $\Bd B$ if it is non-empty. We set $F=f^{-1}(*)$ as the reference regular fiber for $f$.

\begin{definition}
    Let $f \colon X \rightarrow B$ be a singular fibration with critical image $C_f \subset \Int B$. The \textsl{monodromy of $f$} is defined as the monodromy representation \[\omega_f \colon \pi_1(B \setminus C_f, *) \longrightarrow \M (F)\] of the associated surface bundle \[f|_{X \setminus f^{-1}(C_f)} \colon X \setminus f^{-1}(C_f) \longrightarrow B\setminus C_f.\] 
\end{definition}

\begin{remark}
    If $f \colon X\to B$ is a Lefschetz fibration with regular fiber $F$, then the local model at the critical points as in Definition \ref{def:lef} forces the monodromy of a meridian of each critical value to be a Dehn twist along a simple loop $\gamma\subset F$. Conversely, a singular fibration $f\colon X \to B$ for which the monodromy of a meridian for each critical value is a Dehn twist of the regular fiber can be perturbed to become a Lefschetz fibration. We will use this fact when proving Proposition \ref{thm:piergalliniloi}.
   \end{remark}

\section{Factorizing Lefschetz fibrations through branched covers}\label{sec:lef}

Given a Lefschetz fibration $f \colon X \rightarrow B$, we want to understand when it is possible to find a commutative diagram
\begin{center}
\begin{tikzcd}
X \arrow[r, "p"] \arrow[rd, "f"'] & B\times C \arrow[d, "\pi_B"] \\
                                  & B, 
\end{tikzcd}
\end{center}
where $p$ is a cover branched over a surface braided over $B$. We remark that such factorization can be always found in a local model, in the sense that there exists a diagram
\begin{center}
\begin{tikzcd}
\C^2 \arrow[r, "p"] \arrow[rd, "f"'] & \C^2  \arrow[d, "\pi_{1}"] \\
                                     & \C, 
\end{tikzcd}
\end{center}
where $f(z_1,z_2)=z_1z_2$ and $p(z_1,z_2)=(z_1z_2, z_1+z_2)$. With these choices of $f$ and $p$, the diagram commutes. Moreover, it is easy to check that \[B_p=\{(w_1,w_2)\in \C^2 \mid 4 w_1-w_2^2=0 \}\] and the restriction to $B_p$ of the projection onto the first factor is a $2$-fold simple branched cover of $\C$ with a unique twist point over $0\in \C$. 

The main goal of this section is to prove Theorem \ref{thm:3} via an extension of \cite[Proposition 2]{LoiPiergallini} to the non-orientable setting. In order to do that, we need the following generalization of \cite[Proposition 1]{LoiPiergallini}.

\begin{proposition}\label{thm:piergalliniloi}

Let $B$ and $C$ be connected compact surfaces and let \[p \colon X \longrightarrow B \times C\] be a simple cover branched over a surface $S \subset B \times C$ braided over $B$. Then the composition \[f = \pi_B \circ p \colon X \longrightarrow B\] is a Lefschetz fibration which has the same critical values as $\pi_B|_S$ and one critical point over each twist point of $S$.
\end{proposition}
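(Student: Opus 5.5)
The proof is local, and there are three kinds of points of $B\times C$ to treat separately: points off $S$, points of $S$ that are not twist points, and the twist points $s_1,\dots,s_n$. Write $q=\pi_B|_S$ and $B_q=\{b_1,\dots,b_n\}$. The goal is to show two things: $f=\pi_B\circ p$ is a submersion away from $p^{-1}(\{s_1,\dots,s_n\})$, and near each twist point $f$ has the complex local model $(z_1,z_2)\mapsto z_1z_2$, up to the modification described below.

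\emph{Points off $S$.} Here $p$ is a local diffeomorphism and $\pi_B$ is a submersion, so $f$ is a submersion.

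\emph{Non-twist points of $S$.} At a point $s\in S$ that is not a twist point, $q$ is a local diffeomorphism. Hence $S$ is locally a graph over $B$, i.e. transverse to the fibres $\{b\}\times C$. After a local change of coordinates of the form $(b,c)\mapsto(b,c-\sigma(b))$, with $\sigma$ the local section given by $S$, we may take $S=\R^2\times\{0\}$ near $s$. Since $p$ is simple, $p$ is locally $(x,z)\mapsto(x,z^2)$ at the ramification point over $s$, with $x\in\R^2$ the $B$-coordinate. Composing with projection to $x$ gives a submersion. At the remaining $d-2$ unbranched preimages of $s$, $p$ is a local diffeomorphism, so $f$ is again a submersion there.

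\emph{Fibre structure away from $B_q$.} Over $B\setminus B_q$, each fibre $f^{-1}(b)=p^{-1}(\{b\}\times C)$ is a simple $d$-fold cover of $C$ branched over the $d$ points $S\cap(\{b\}\times C)$. These data vary in a locally trivial way, so $f$ is a bundle over $B\setminus B_q$. Compactness of $X$ and properness of $f$ give the bundle structure via Ehresmann; at boundary points, the boundary charts of the branched cover definition let the same arguments go through.

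\emph{Twist points.} Near $s_i$, choose local coordinates so that $S$ is given by $\{(w_1,w_2)\in\C^2 : w_2^2=w_1\}$ (or $4w_1=w_2^2$) and $\pi_B$ is projection to $w_1$. Such coordinates exist because $q$ is a simple fold at $s_i$: locally $q$ is $z\mapsto z^2$, so $S$ is locally the graph of a map over a disk in $C$ whose composition with $\pi_B$ is a fold. A fibre-preserving isotopy then straightens $S$ to the model. This is exactly the local model recalled before the statement: $p_0(z_1,z_2)=(z_1z_2,z_1+z_2)$ has branch set $4w_1=w_2^2$ and $\pi_1\circ p_0=z_1z_2$.

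By uniqueness of simple branched covers with given branch set and monodromy (the monodromy around the local branch curve is a transposition, and a ball minus a smooth disk has $\pi_1=\Z$), $p$ near the ramification point over $s_i$ is equivalent to $p_0$ via a diffeomorphism upstairs. So $f$ near that point is $z_1z_2$ composed with a diffeomorphism, which is a Lefschetz critical point. The other $d-2$ preimages of $s_i$ are unbranched, so $f$ is a submersion there. This yields one critical point over each twist point, with critical values $B_q$.

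\emph{Main obstacle: smoothness of the local normal form.} The difficulty is that the local model is only smooth, not holomorphic, and the coordinate changes on the base are only fibre-preserving diffeomorphisms, not complex ones. To handle this, I would note that the resulting map is a singular fibration whose meridian monodromy around each critical value $b_i$ is a Dehn twist: by Proposition~\ref{prop:liftcrit}, it is the lift of the half twist realizing the braid monodromy $\omega_S$ around $b_i$. I would then invoke the remark in Section~\ref{sec:7}: such a singular fibration can be perturbed, near the critical points, to a genuine Lefschetz fibration. This perturbation changes neither the critical values nor the count of critical points.
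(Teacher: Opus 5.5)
Your proposal is correct and essentially the paper's argument. You show that $f$ is a submersion away from the ramification points over the twist points and a surface bundle over $B\setminus B_q$; its monodromy around each $b_i$ is the lift of the half-twist braid monodromy of $S$, hence a Dehn twist by Proposition~\ref{prop:liftcrit}, and you then invoke the perturbation remark of Section~\ref{sec:7}, exactly as the paper does.

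Your extra comparison with $p_0(z_1,z_2)=(z_1z_2,z_1+z_2)$ is fine topologically, and you were right to distrust it smoothly. Uniqueness of branched covers only gives an equivalence over a base diffeomorphism that need not preserve the fibres of $\pi_B$. For example, $p(\zeta,t)=(\zeta^2+\epsilon\bar\zeta^2+t^2,\,t)$ with $0<\epsilon<1$ is a smooth simple cover branched over $\{b=c^2\}$, yet $\pi_B\circ p$ has quadratic part not equivalent to $z_1z_2$. So it is the monodromy-plus-perturbation step, not the normal form, that actually carries the proof.
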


\begin{remark}
    Unlike previous versions of this result, we do not assume that $X$ is orientable. We are most interested in the case when $X$ is non-orientable. In this situation, $C$ must also be non-orientable.
\end{remark}

Since the proof is very similar to \cite[Proposition 1]{LoiPiergallini}, we provide a sketch for the convenience of the reader.
\begin{proof}[Sketch of proof]
First of all, notice that $f$ is regular at each regular point of $p$. Moreover, one can show that $x\in X$ is a critical point of $f$ if and only if $p(x)\in B \times C$ is a twist point for the braided surface $S$. Let $s_1,\dots, s_n \in S$ be the twist points of $S$ and set $b_i=\pi_B(s_i)$ for every $i=1, \dots, n$. Then $f$ is regular over $B \setminus \{b_1, \dots, b_n\}$. Given $b\in B \setminus \{b_1, \dots, b_n\}$, the regular fiber is given by the pre-image of $\{b\}\times C \subset B \times C$ via $p$. In particular, the restriction of $p$ over $\{b\} \times C$ is a simple cover branched over the transverse intersection between $\{b\}\times C$ and $S$. To summarize, from this we see that the regular fiber $f\inv (b)$ is a surface $F$, where $p|_F$ is a simple cover of $\{b\}\times C$, branched over $S\cap (\{b\}\times C)$. Hence $f$ is a locally trivial fiber bundle away from its critical points. 

To conclude the proof, it is enough to study the behavior of $f$ over each $b_i$. Since $p$ is simple, over each critical value $b_i$ there is only one critical point $x_i\in X$. Let $\ell_i$ be a small loop in $B$ centered around $b_i$. Then $\pi_B\inv(\ell_i)\cong \ell_i\times C$, and $f\inv(\ell_i)=p\inv(\ell_i\times C)$ is a simple cover of $\ell_i\times C$ branched over $S\cap (\ell_i \times C)$, which is a braid $\beta_i\subset \ell_i\times C$. Since $f\inv(\ell_i)$ fibers over $\ell_i\cong S^1$, we see that it's indeed a mapping torus $MT(F,\mu_i)$, and we wish to determine $\mu_i$. To proceed, we first consider what exactly the braid $\beta_i$ is. Recall that $\pi_B|_S$ is a $d$-fold simple branched cover for some $d>0$. The monodromy of $\pi_B|_S$ associated to $b_i$ is a transposition $(i\,j)$ in the symmetric group on $d$ elements. So in $S\cap (\ell_i\times C)$, $\beta_i$ is a half twist along an arc $\alpha_i\subset \{pt\}\times C\subset \ell_i\times C$ between two points in $S\cap (\{pt\}\times C)$, which corresponds to two points in the $i$ and $j$ sheets of $S$ being interchanged. It is a local argument to show that the half twist around $\alpha_i$ lifts to a Dehn twist $t_{\gamma_i}^{\pm 1}$ (cf. Remark \ref{rmk:ht_dt}). Hence $\mu_i = t_{\gamma_i}^{\pm 1}$. Since the monodromy around each critical value of $f$ is a positive or negative Dehn twist, this confirms that $f$ is a Lefschetz fibration.
\end{proof}

\begin{proof}[Proof of Theorem \ref{thm:3}] Let $F$ be the regular fiber of $f$. By assumption, $F$ is a closed connected non-orientable surface. Let $d \geq 6$ be an integer such that $d \equiv \chi(F) \pmod{2}$. Corollary \ref{thm:lift} implies the existence of a simple $d$-fold branched cover \[q \colon F \longrightarrow \mathbb{RP}^2\] such that every Dehn twist around a vanishing cycle of $f$ is the lift of a half twist on $\mathbb{RP}^2$. Moreover, Theorem \ref{thm:1} shows that the lifting homomorphism for $q$ is surjective. One can then use $q$ to build a simple $d$-fold cover $p \colon X \rightarrow B \times \mathbb{RP}^2$ branched over a braided surface $S\subset B \times \mathbb{RP}^2$ such that the critical values of $\pi_{B}|_S$ coincide with those of $f$. By Proposition \ref{thm:piergalliniloi}, we have that $\pi_{B}\circ p$ is a Lefschetz fibration. One can check the monodromy around each of its critical values to see that $\pi_{B}\circ p$ coincides with $f$. The construction of the branched cover $p$ is analogous to the construction of $p\colon X \rightarrow Y \times Z$ in \cite[Proposition 2]{LoiPiergallini}. Here is a summary of how to build $p$.

Let $C_f=\{b_1,\cdots ,b_n\}\subset B$ be the critical image of $f$. For now, assume that the base $B$ is a 2-disk. Let $\{A_i\}$ be a collection of pairwise disjoint disks, where each $A_i$ is a tubular neighborhood of an arc from $\partial B$ to $b_i$. Set $\widehat B= B \setminus (\cup_i \Int A_i)$. Then $f\inv(\widehat B)\cong F\times \widehat B\cong F\times B$. Start with the cover \[p_0=\text{id}_{\widehat B}\times q \colon \widehat B \times F \longrightarrow \widehat B \times \RP^2.\] The branch set for $p_0$ is $\widehat B \times B_q\subset \widehat B \times \RP^2$, which is (trivially) braided over $\widehat B$. For each $A_i$, we construct a cover $q_i\colon W_i\to A_i \times \RP^2$ which we then glue to $f\inv(\widehat B)$ and $\widehat B \times \RP^2$. The construction of $q_i$ is as follows: take the cover \[\text{id}_{A_i}\times q \colon A_i\times F \longrightarrow A_i \times \RP^2.\] Then isotope the vanishing cycle $\gamma_i$ to a curve $\gamma'_i$ which is the lift of a half twist with respect to $q$. Then $q(\gamma'_i)$ is an embedded arc $\alpha_i\subset \RP^2$ between two branch points of $q$. Let $S_i$ be the surface obtained from $A_i \times B_q\subset A_i \times \RP^2$ by adding a half-twisted band along an arc which projects to $\alpha_i$ on the $\RP^2$ factor. Then $\pi_{A_i}|_{S_i}$ is a simple cover with a single branch point, which we may assume is $b_i$. Let $q_i\colon W_i\to A_i \times \RP^2$ be the $d$-fold simple branched cover with branch set $S_i$. After gluing on each $W_i$ and $A_i \times \RP^2$ via $q_i$, it follows from Proposition \ref{thm:piergalliniloi} that the resulting map is a Lefschetz fibration. One can compute and verify that the monodromy coincides with that of $f\colon X\to B$.

Next, assume $B$ is an arbitrary connected compact surface with non-empty boundary. Let $\{\delta_i\}$ be a finite collection of properly embedded arcs on $B$ which are pairwise disjoint, disjoint from the critical values $\{b_1,\cdots , b_n\}$, and such that cutting $B$ along each $\delta_i$ results in a $2$-disk, denoted $B'$. Let $X'$ be the space obtained by cutting $X$ along each $f\inv(\delta_i)$. Then we have a well-defined Lefschetz fibration \[f'\colon X' \longrightarrow B'\] over a disk, which descends to the original Lefschetz fibration $f$ after gluing along each $\delta_i$ and its pre-image. We first apply the procedure in the previous paragraph to $f'$ to obtain a branched cover \[p'\colon X'\longrightarrow B'\times \RP^2\] such that $f' = \pi_{B'}\circ p'$. Then we re-glue $B'$ along the $\delta_i$ arcs, and correspondingly re-glue $X'$ along each $(f')\inv(\delta_i)$. Identifying the regular fiber with $F$, we may assume the gluing of $X'$ along $(f')\inv(\delta_i)$ is given by $\operatorname{id}_{\delta_i}\times \tilde \phi_i$ for some $\tilde \phi_i\in \operatorname{Diff}(F)$. The restriction of the branched cover $p'$ to $(f')\inv(\delta_i)$ is given by \[\operatorname{id}_{\delta_i}\times q\colon \delta_i\times F\to \delta_i\times \mathbb{RP}^2,\] and the branch set along this restriction is $\delta_i\times B_q\subset \delta_i\times \mathbb{RP}^2$. Since the lifting homomorphism $\lambda_q$ is surjective, $\tilde \phi_i$ is, up to isotopy, equal to the lift of some self-diffeomorphism $\phi_i$ of $(\RP^2, B_q)$. By gluing $B'\times \RP^2$ along each $\delta_i\times \RP^2$ via $\operatorname{id}_{\delta_i}\times \phi_i$, we obtain from $p'$ the final branched cover \[p\colon X\longrightarrow B\times \mathbb{RP}^2,\] completing the proof.
\end{proof}

\end{document}